\author{Richard Gottesman}
\address{Department of Mathematics and Statistics\\ Queen's University\\ Kingston, ON K7L3N6 }
\email{richard.b.gottesman@gmail.com}
\title{The arithmetic of vector-valued modular forms on $\Gamma_{0}(2)$}
\newcommand{\nin}{\noindent}
\newcommand{\Z}{\textbf{Z}}
\newcommand{\Q}{\textbf{Q}}
\newcommand{\C}{\textbf{C}}
\newcommand{\N}{\textbf{N}}
\newcommand{\Ind}{\textrm{Ind}}
\declaretheorem[name=Theorem, numberwithin=section]{thm}
\newtheorem{cor}[thm]{Corollary}
\newtheorem{assumption}[thm]{Assumption}
\newtheorem{lemma}[thm]{Lemma}
\newtheorem{proposition}[thm]{Proposition}
\newtheorem{definition}[thm]{Definition}
\newtheorem{remark}[thm]{Remark}
\newcommand{\Poincare}{Poincar\'e}
\begin{document}
\renewcommand{\labelenumi}{(\roman{enumi})}
\def\d{\mathrm{d}}
\def\e{e}

\newcommand{\leg}[2]{\genfrac{(}{)}{}{}{#1}{#2}}
\makeatletter
\def\moverlay{\mathpalette\mov@rlay}
\def\mov@rlay#1#2{\leavevmode\vtop{%
   \baselineskip\z@skip \lineskiplimit-\maxdimen
   \ialign{\hfil$\m@th#1##$\hfil\cr#2\crcr}}}
\newcommand{\charfusion}[3][\mathord]{
    #1{\ifx#1\mathop\vphantom{#2}\fi
        \mathpalette\mov@rlay{#2\cr#3}
      }
    \ifx#1\mathop\expandafter\displaylimits\fi}
\makeatother

\newcommand{\cupdot}{\charfusion[\mathbin]{\cup}{\cdot}}
\newcommand{\bigcupdot}{\charfusion[\mathop]{\bigcup}{\cdot}}
\subjclass[2000]{Primary: 11A25, Secondary: 11K65, 11N25}
\keywords{Vector-Valued Modular Forms, Modular Linear Differential Equations, Unbounded Denominator Conjecture, Monodromy, Hypergeometric Series.} 

\begin{abstract}
\nin Let $\rho$ denote an irreducible two-dimensional representation of $\Gamma_{0}(2).$ The collection of vector-valued modular forms for $\rho$, which we denote by $M(\rho)$, form a graded and free module of rank two over the ring of modular forms on $\Gamma_{0}(2)$, which we denote by $M(\Gamma_{0}(2)).$ For a certain class of $\rho$, we prove that if $Z$ is any vector-valued modular form for $\rho$ whose component functions have algebraic Fourier coefficients then the sequence of the denominators of the Fourier coefficients of both component functions of $Z$ is unbounded. Our methods involve computing an explicit basis for $M(\rho)$ as a $M(\Gamma_{0}(2))$-module. We give formulas for the component functions of a minimal weight vector-valued form for $\rho$ in terms of the Gaussian hypergeometric series $_{2}F_{1}$, a Hauptmodul of $\Gamma_{0}(2)$, and the Dedekind $\eta$-function.
 \end{abstract}
\maketitle 
\section{Introduction} 
\nin The arithmetic of the Fourier coefficients of vector-valued modular forms for a representation of the modular group $\Gamma := \textrm{SL}_{2}(\Z)$ have been intensively studied by Cameron Franc, Chris Marks, and Geoff Mason. One of the motivations for their work is the unbounded denominator conjecture of Atkin and  Swinnerton-Dyer \cite{atkin}. Atkin and Swinnerton-Dyer gave examples of modular forms on noncongruence subgroups whose Fourier coefficients have unbounded denominators. It is an open problem to show that modular forms on noncongruence subgroups have unbounded denominators. Progress on this problem has been made by Kurth and Long \cite{long2008modular}, \cite{kurth2008modular}, and by Li and  Long \cite{li2012fourier}. 
In \cite{mason2012}, Mason considered the unbounded denominator problem for vector-valued modular forms. 
Mason \cite{mason2012} proved that for all but a finite number of two-dimensional irreducible representations $\rho$ of $\Gamma$, every vector-valued modular form for $\rho$ whose Fourier coefficients are algebraic numbers has the property that the denominators of the Fourier coefficients of each of its component functions is unbounded. Marks \cite{chrismarks} has proven the analogous result for all but a finite number of three-dimensional representations $\rho$ of $\Gamma.$ Franc and Mason \cite{franc2013fourier} solved a modular linear differential equation to prove the same result for all two-dimensional representations $\rho$ such that $\textrm{ker } \rho$ is noncongruence. The main theorem in this paper is the proof of such an unbounded denominator result for a certain class of representations of $\Gamma_{0}(2).$ \\

\nin In the course of proving this result, we solve the general monic modular linear differential equation of order two on $\Gamma_{0}(2).$ Modular linear differential equations have been studied by Kaneko \cite{kanekomodular}, Kaneko and Koike \cite{kanekokoike}, Kaneko, Nagatomo, and Sakai \cite{kaneko2017}, Sebbar and Sebbar \cite{sebbar}, and Kaneko and Zagier \cite{kanekozagier}. Our analysis of the Fourier coefficients of a basis of solutions to the relevant modular linear differential equation boils down to understanding the number-theoretic properties of the coefficients of the hypergeometric series $_{2}F_{1}$ at arguments in a quadratic field.  Hong and Wang \cite{hong} adopt a $p$-adic perspective to study the arithmetic of the coefficients of the hypergeometric series $_{2}F_{1}$ at arguments in a quadratic field. The arithmetic of the coefficients of $_{2}F_{1}$ with rational parameters are examined in the works of Dwork \cite{dwork2}, \cite{dwork1}, Christol \cite{christol}, Franc, Gannon, and Mason \cite{franc2018unbounded}, and Franc, Gill, Goertzen, Pas, and Tu \cite{francdensities}. \\

\nin Let $\mathfrak{H}$ denote the complex upper-half plane. For each $k \in \Z$, we shall define an action of $\Gamma$ on holomorphic functions of $\mathfrak{H}.$ Let $t$ denote a positive integer and let  $F: \mathfrak{H} \rightarrow \C^{t}$ denote a holomorphic function. If $\gamma =  \left[ {\begin{array}{cc}
   a & b \\
   c & d \\
\end{array} } \right] \in \Gamma $ then we define 
\begin{equation} 
\label{eqn:slash}
 F|_{k}  \gamma (\tau) :=  (c \tau + d)^{-k} F\left(\frac{a \tau + b}{c \tau + d}\right). 
\end{equation}
\nin Let $H$ denote a finite index subgroup of $\Gamma$ and let $\rho$ denote a finite-dimensional complex representation of $H.$
\begin{definition} A \textbf{vector-valued modular form }$F$ of weight $k$ with respect to $\rho$ is a holomorphic function $F: \mathfrak{H} \rightarrow \C^{\textrm{dim } \rho}$ which is also holomorphic at all of the cusps of \newline 
$H \backslash (\mathfrak{H} \bigcup \mathbb{P}^{1}(\Q))$ and such that for all  $\gamma \in H$, 
\begin{equation} \label{eqn: transformation} F|_{k} \gamma =  \rho(\gamma)F.\; \; \; \;  \end{equation}  \end{definition}

\nin The statement that $F$ is holomorphic at all of the cusps means that for each $\gamma \in \Gamma$, 
$F|_{k} \gamma$ has a holomorphic $q$-expansion. The notion of a holomorphic $q$-expansion for a vector-valued modular form is more intricate than in the scalar-valued case. 
A detailed description is given in Section \ref{derivative}.  \\

 \nin  We denote the collection of all weight $k$ vector-valued modular forms with respect to $\rho$ by $M_{k}(\rho).$  For each $k \in \Z$, $M_{k}(\rho)$ is a finite-dimensional $\C$-vector space. We let $M(\rho) := \bigoplus_{k \in \Z} M_{k}(\rho).$ Let $M_{t}(H)$ denote the collection of all holomorphic weight $t$ modular forms on $H$ and let $M(H) := \bigoplus_{t \in \Z} M_{t}(H).$ If $m \in M_{t}(H)$ and if $F \in M_{k}(\rho)$ then $mF \in M_{k +t}(\rho).$ In this way, $M(\rho)$ has the structure of a $\Z$-graded $M(H)$-module. The structure of $M(\rho)$ as a $M(H)$-module is of fundamental importance in this paper. If $H = \Gamma$ then the $M(H)$-module structure of $M(\rho)$ is completely understood:

\begin{thm} \label{MarksMason} Let $\rho$ denote a representation of $\Gamma$. Then $M(\rho)$ is a free 
$M(\Gamma)$-module whose rank equals the dimension of $\rho$. 
\end{thm}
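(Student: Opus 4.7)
The plan is to produce a free basis of size $d := \dim \rho$ by exploiting the fact that $M(\Gamma) = \C[E_4, E_6]$ is a graded polynomial ring, together with the observation that any $d+1$ holomorphic $\C^d$-valued functions are linearly dependent over the field of meromorphic functions on $\mathfrak{H}$. I would proceed in three stages: constructing candidate generators of minimal weight, proving their $M(\Gamma)$-linear independence, and proving they generate.

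First, I would exhibit a nonzero element of $M(\rho)$, for instance via a Poincar\'e-series construction that converges absolutely for sufficiently large weight and can be arranged nonzero by choice of seed vector. Then I would pick $F_1 \in M_{k_1}(\rho) \setminus \{0\}$ of minimal weight $k_1$, and inductively pick $F_{j+1}$ of minimal weight not lying in $\sum_{i \leq j} M(\Gamma) F_i$, so long as such an element exists. This process must terminate in at most $d$ steps, since the $M(\Gamma)$-rank of $M(\rho)$ is at most $d$: any $d+1$ vector-valued forms, viewed as $\C^d$-valued holomorphic functions, are linearly dependent over meromorphic scalars, hence (after clearing denominators) over the fraction field $K$ of $M(\Gamma)$.

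Second, I would show $F_1, \ldots, F_d$ are $M(\Gamma)$-linearly independent. Suppose for contradiction there is a nonzero relation of minimal support, $\sum_{i \in S} m_i F_i = 0$ with each $m_i \in M(\Gamma) \setminus \{0\}$. Apply the Serre derivative $\theta_k(F) := qF' - \tfrac{k}{12} E_2 F$, which sends $M_k(\rho)$ to $M_{k+2}(\rho)$, and exploit the Leibniz identity $\theta(mF) = \theta(m) F + m \theta(F)$ to obtain a second relation. An appropriate $M(\Gamma)$-combination of the two relations eliminates one term and yields a shorter relation, a contradiction. This simultaneously shows the $K$-rank of $M(\rho)$ is exactly $d$, so the inductive process of the first stage terminates in exactly $d$ steps.

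Third, I would prove $F_1, \ldots, F_d$ generate $M(\rho)$. Given $F \in M(\rho)$, the rank claim lets us write $F = \sum f_i F_i$ with $f_i \in K$; the task is to show each $f_i$ lies in $M(\Gamma)$. This is the main obstacle. The argument I have in mind uses the weight-minimality of the $F_i$: if some $f_i$ had a pole as a rational function in $E_4, E_6$, clearing the denominator would produce an $M(\Gamma)$-combination of $F_1, \ldots, F_d$ whose leading part exhibits an element of weight strictly smaller than $k_{j+1}$ outside $\sum_{i \leq j} M(\Gamma) F_i$ for an appropriate $j$, contradicting the choice of $F_{j+1}$. An alternative route would be purely commutative-algebraic: show that $E_4, E_6$ is a regular sequence on $M(\rho)$, whence $M(\rho)$ is Cohen--Macaulay over the Krull-dimension-$2$ ring $\C[E_4, E_6]$ and free by Auslander--Buchsbaum. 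In either approach, the delicate point is the local analysis at the elliptic points of orders $2$ and $3$ and at the cusp, especially when $\rho$ is not semisimple, where $q$-expansions may involve logarithmic terms and one must carefully track vanishing orders.
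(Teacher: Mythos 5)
First, a point of comparison: the paper does not prove \cref{MarksMason} at all; it quotes the result, citing the \Poincare-series proof of Marks--Mason \cite{marksmason}, Gannon's Riemann--Hilbert proof \cite{gannon2014theory}, and the geometric proof of Candelori--Franc \cite{francfreemodule}. The closest argument in the author's own work is the commutative-algebra proof of the generalization \cref{free} given in \cite{CohenM} (Cohen--Macaulayness plus the graded freeness criterion), and that is exactly the ``alternative route'' you relegate to the end of your third stage. That route is the one that actually works and is in substance the Marks--Mason/\cite{CohenM} proof; the main line of your proposal, however, has a genuine gap.

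The gap is in your second stage. Applying the modular derivative to a relation $\sum_{i} m_i F_i = 0$ gives $\sum_i (D\,m_i)F_i + \sum_i m_i\, D F_i = 0$, and the terms $D F_i$ are new elements of $M(\rho)$ which you cannot yet rewrite as $M(\Gamma)$-combinations of $F_1,\dots,F_d$ --- that is precisely the generation statement you postpone to stage three. So you never obtain a second $M(\Gamma)$-relation among the $F_i$, and the elimination that is supposed to shorten the support does not get started; the claim that this ``simultaneously shows the $K$-rank is $d$'' therefore also collapses. Nor can independence of greedily chosen minimal-weight forms be a formality: the identical greedy construction makes sense over any finite-index subgroup $H$, and there $M(\rho)$ need not even be projective over $M(H)$ (see the discussion following \cref{MarksMason} and \cite{structure}), so any correct argument must use structure special to $\Gamma$ --- which is what the Cohen--Macaulay route supplies and what your ``leading part'' sketch in stage three does not. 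Two further debts: the inference ``dependent over meromorphic functions, hence over $K$'' in stage one needs the standard argument that the solving coefficients are $\Gamma$-invariant (slash the dependency by $\gamma$ and use independence of a maximal subset) and meromorphic at the cusp; and if you adopt the commutative-algebra route you must still establish finite generation of $M(\rho)$ over $M(\Gamma)$, vanishing of $M_k(\rho)$ for $k\ll 0$, and the rank count, which cannot come from your stage two --- for irreducible $\rho$ it follows because a relation among $F, DF,\dots,D^{d-1}F$ would force the components of $F$ to satisfy a modular linear differential equation of order less than $d$, contradicting \cref{irred}, and the general case needs a separate (e.g.\ Hilbert-series or induction) argument. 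Finally, for the regular sequence it is safer to take $\Delta, E_4$ rather than $E_4, E_6$: since $\Delta$ is nonvanishing on $\mathfrak{H}$ the elliptic-point bookkeeping you worry about disappears, and the graded criterion (rather than Auslander--Buchsbaum in the abstract) then yields freeness over $M(\Gamma)=\C[E_4,E_6]$ itself.
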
 

\nin Theorem \ref{MarksMason} was proven by Chris Marks and Geoff Mason using vector-valued \Poincare \; series \cite{marksmason}, by Terry Gannon using a Riemann-Hilbert perspective \cite{gannon2014theory}, and by Luca Candelori and Cameron Franc using an algebro-geometric approach \cite{francfreemodule}.  In unpublished work, Mason has shown that $M(\rho)$ need not be free as a $M(H)$-module for subgroups $H$ 
of finite index in $\Gamma$ -- see \cite{structure} for a general discussion of questions of this nature, and an example that illustrates that $M(\rho)$ need not even be projective over $M(H).$ Nevertheless, $M(\rho)$ is a free $M(H)$-module if $H = \Gamma_{0}(2).$ This result follows immediately from the fact that $M(\Gamma_{0}(2))$ is generated as a $\C$-algebra by two modular forms which are algebraically independent and the following theorem: 

\begin{thm} \label{free} Let $H$ denote a finite index subgroup of $\Gamma$ and let $\rho$ denote a representation of $H$. Suppose that there exist modular forms $X$ and $Y$ in $M(H)$ which are algebraically independent such that $M(H) = \C[X,Y]$. Then $M(\rho)$ is a free $M(H)$-module whose rank equals the dimension of $\rho$. Moreover, there exists a $M(H)$-basis for $M(\rho)$ which consists of elements of $M(\rho)$ which are vector-valued modular forms for $\rho.$
\end{thm}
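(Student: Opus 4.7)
The plan is to establish that $M := M(\rho)$, viewed as a graded $R$-module with $R := M(H) = \C[X,Y]$, is finitely generated, torsion-free, and projective; graded projectivity over the graded polynomial ring $R$ then forces graded freeness via graded Nakayama, and a graded basis of $M$ is by construction a collection of vector-valued modular forms of specified weights.

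Torsion-freeness is immediate by analytic continuation: for nonzero $f \in R$, the locus $\{f \neq 0\}$ is a nonempty open subset of $\mathfrak{H}$, so $fF = 0$ forces $F \equiv 0$. Finite generation can be deduced from Theorem \ref{MarksMason} together with the Frobenius reciprocity isomorphism $M(\rho) \cong M(\Ind_{H}^{\Gamma} \rho)$ of graded $\C$-vector spaces: the right-hand side is free of finite rank over $M(\Gamma) = \C[E_4, E_6]$, and $R$ is module-finite over $M(\Gamma)$ (the field extension $\mathrm{Frac}(R)/\mathrm{Frac}(M(\Gamma))$ is finite of degree $[\Gamma:H]$, and $R$ is integral over $M(\Gamma)$ by an averaging-over-cosets argument), so $M$ is finitely generated over $R$ as well.

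The main step is projectivity. Because $R$ is graded regular of Krull dimension $2$ with unique graded maximal ideal $\mathfrak{m} := (X,Y)$, the Auslander--Buchsbaum formula forces $\mathrm{pd}_{R}(M) + \mathrm{depth}_{\mathfrak{m}}(M) = 2$, so projectivity is equivalent to $\mathrm{depth}_{\mathfrak{m}}(M) = 2$, i.e. to $M$ being maximal Cohen--Macaulay. Torsion-freeness supplies depth at least $1$; to push it to depth $2$, I would realize $M$ as the module of global sections of a locally free sheaf on the weighted projective stack $\mathrm{Proj}(R) = \mathbb{P}(\deg X, \deg Y)$ attached to $\rho$. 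Local triviality of this sheaf away from the orbifold points follows from the finite-dimensionality of $\rho$ together with the fact that the associated monodromy representation has finite image around any non-special point; at the orbifold points (corresponding to the elliptic and cuspidal points of $H\backslash \mathfrak{H}^{*}$) local triviality follows from the local normal form of vector-valued forms, which produces, after passing to a local uniformizer, a free module of rank $\dim \rho$ over the local ring.

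Once projectivity is in hand, graded Nakayama produces a homogeneous generating set for $M$ by lifting a graded $\C$-basis of $M/\mathfrak{m}M$; projectivity then forces the lifts to be $R$-linearly independent, giving the desired graded basis, whose elements are vector-valued modular forms by virtue of being homogeneous of specific weights. The rank is pinned down by comparing Hilbert series: the leading asymptotic of $\dim_{\C} M_{k}$ is proportional to $\dim \rho$ times the leading asymptotic of $\dim_{\C} R_{k}$, matching a free module of rank $\dim \rho$. The main obstacle I expect is verifying the depth-$2$ condition; while the framework is classical, ruling out embedded primes of $M/XM$ at maximal ideals requires careful control of the local structure of $M(\rho)$ at the orbifold points, where the interaction between $\rho$ and the geometry of $H\backslash \mathfrak{H}^{*}$ is delicate.
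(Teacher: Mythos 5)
Your overall skeleton---finite generation plus torsion-freeness plus depth two at $\mathfrak{m}=(X,Y)$, then Auslander--Buchsbaum and graded Nakayama---is sound and is in the same spirit as the commutative-algebra proof the paper points to in \cite{CohenM}, where freeness is deduced from $M(\rho)$ being a Cohen--Macaulay graded $M(H)$-module. The genuine gap is that you never establish the one step you yourself identify as the heart of the matter: $\mathrm{depth}_{\mathfrak{m}}\,M(\rho)=2$. You delegate it to realizing $M(\rho)$ as the graded section module of a locally free sheaf on the weighted projective stack attached to $M(H)$, but that realization is precisely the nontrivial content here---it is the Candelori--Franc theorem \cite{structure} that the paper cites as the alternative proof---and your sketch of it does not hold together. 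The justification ``the associated monodromy representation has finite image around any non-special point'' is both irrelevant (away from elliptic points and cusps the map $\mathfrak{H}\rightarrow H\backslash\mathfrak{H}$ is a local biholomorphism, so there is no local monodromy to control) and unavailable at the stated level of generality, since $\rho$ is an arbitrary representation and the representations of interest in this very paper have infinite image. At the cusps the issue is not a quotable ``local normal form'': $\rho(T^{N})$ need not be diagonalizable, the Knopp--Mason logarithmic expansions enter, exponents must be chosen, and one must check both that the resulting sheaf is locally free and that its degree-$k$ global sections are exactly $M_{k}(\rho)$. Moreover, even granting local freeness, depth two requires the section module to be saturated (vanishing of $H^{0}_{\mathfrak{m}}$ and $H^{1}_{\mathfrak{m}}$), which you do not address. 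As written, the proposal assumes the hard part of the conclusion.

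For comparison, the route in \cite{CohenM} gets depth two by an explicit, elementary device: $\Delta, E_{4}$ is a regular sequence on $M(\rho)$ (a $q$-expansion argument at the cusps shows $E_{4}$ is a nonzerodivisor on $M(\rho)/\Delta M(\rho)$, and minimal-weight considerations show the relevant quotients are nonzero), the Krull dimension is two, and then a standard graded fact (a Cohen--Macaulay graded module that is finite over a polynomial subring generated by homogeneous elements is free over that subring, with a homogeneous basis) finishes the argument; homogeneity of the basis immediately gives vector-valued modular forms. Your finite-generation step via $M(\rho)\cong M(\textrm{Ind}_{H}^{\Gamma}\rho)$ and Theorem \ref{MarksMason} matches the paper's, and the cleanest rank computation is the one that isomorphism already provides: $M(\rho)$ is $M(\Gamma)$-free of rank $[\Gamma:H]\dim\rho$ while $M(H)$ is $M(\Gamma)$-free of rank $[\Gamma:H]$, so a free $M(H)$-structure forces rank $\dim\rho$; your Hilbert-series asymptotics would need exactly this input anyway. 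If you wish to keep the geometric route, you are in effect re-proving Candelori--Franc, and you should either cite \cite{structure} for the bundle construction or supply the cuspidal local analysis and the saturation argument in full.
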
 

\nin The author uses commutative algebra to give a proof of Theorem \ref{free} in \cite{CohenM}. We now explain how this theorem also follows from the work of  Candelori and Franc \cite{structure}. In \cite{structure}, Candelori and Franc define geometrically weighted vector-valued modular forms. They prove that if $H$ is a Fuchsian group of genus zero with at most two elliptic points then the collection of geometrically weighted vector-valued modular forms for $\rho$ is a free module over the ring of geometrically weighted modular forms for $H.$ If $H$ satisfies the hypothesis of Theorem \ref{free} then the collection of geometrically weighted vector-valued modular forms is equal to $M(\rho)$ and the collection of geometrically weighted modular forms for $H$ is equal to $M(H)$ and one then obtains Theorem \ref{free}. \\

 \nin A complete determination of the finitely many subgroups $H$ which satisfy the hypothesis of Theorem \ref{free} is given by Bannai, Koike, Munemsasa, and Sekiguchi in \cite{polynomial}. Two such subgroups are $\Gamma$ and $\Gamma_{0}(2).$ The problem of computing an explicit basis for $M(\rho)$ as a $M(H)$-module was investigated in the case when $H = \Gamma$ in the works of Franc and Mason \cite{franc2013fourier}, \cite{survey}, \cite{3dim}. The first part of this paper addresses the analogous problem when $H = \Gamma_{0}(2).$ Solving this problem will  enable us to study the arithmetic of all vector-valued modular forms for $\rho$ by studying the arithmetic of those vector-valued modular forms needed to form a basis for $M(\rho).$ We emphasize that the existence of a weight two modular form on $\Gamma_{0}(2)$ presents an interesting complication when computing a basis for $M(\rho)$ that does not appear if one studies vector-valued modular forms for representations of $\Gamma$.  \\
 
 \nin It will be useful to write down $\C$-algebra generators for $M(\Gamma_{0}(2)).$   
Let $q = e^{2 \pi i \tau}.$ We recall the following functions:
\begin{equation} \label{eqn: E2} E_2(\tau) = 1 - 24 \sum_{n=1}^{\infty} \sigma(n) q^{n} \end{equation}
\begin{equation}
\label{eqn:E4}
 E_4(\tau)  = 1 + 240\sum_{n=1}^{\infty} \sigma_{3}(n)q^{n} 
\end{equation}
\begin{equation} 
\label{eqn:G}
G (\tau)  := -E_2(\tau) + 2E_2(2 \tau)
\end{equation} 
The function $G$ is a weight two modular form on $\Gamma_{0}(2).$ In fact, $M(\Gamma_{0}(2)) = \C[G, E_4].$ In section \ref{derivative}, we show how to use a differential operator to compute a $M(\Gamma_{0}(2))$-basis for $M(\rho)$ if $\rho$ is irreducible and $\textrm{dim } \rho = 2.$ We prove the following:

\begin{restatable}{thm}{basis} \label{thm: basis} Let $\rho: \Gamma_0(2) \rightarrow \textrm{GL}_{2}(\C)$ be an irreducible representation. 
Let $k_0$ denote the integer for which $M_{k_0}(\rho) \neq 0$ and $M_{k}(\rho) = 0$ if $k < k_0.$ 
Let $F$ denote a nonzero element in $M_{k_0}(\rho)$.  
Then $F$ and $D_{k_0}F := \frac{1}{2 \pi i}\frac{dF}{d \tau} - \frac{k_0}{12}E_2 F$ form a basis for $M(\rho)$ as a $M(\Gamma_0(2))$-module.  
\end{restatable}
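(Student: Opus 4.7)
The plan is to invoke Theorem \ref{free}, which gives that $M(\rho)$ is a free $M(\Gamma_{0}(2))$-module of rank two with some basis $\{H_{1}, H_{2}\}$ of weights $k_{1} \le k_{2}$, and then (using that the $E_{2}$-anomaly cancellation puts $D_{k_{0}} F$ in $M_{k_{0}+2}(\rho)$) to establish in order: (i) $F$ and $D_{k_{0}} F$ are $M(\Gamma_{0}(2))$-linearly independent; (ii) $\dim M_{k_{0}}(\rho) = 1$, equivalently $k_{1} < k_{2}$; and (iii) $k_{2} = k_{0} + 2$, so that $\{F, D_{k_{0}} F\}$ differs from $\{F, H_{2}\}$ by an invertible constant matrix.

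For (i), suppose $p F + q D_{k_{0}} F = 0$ with $p, q \in M(\Gamma_{0}(2))$ not both zero. Passing to the fraction field gives $D_{k_{0}} F = m F$ componentwise for a meromorphic weight-two modular function $m$. For $F = (f_{1}, f_{2})^{T}$, the $E_{2}$-contributions cancel and one reads off $f_{1}'/f_{1} = f_{2}'/f_{2}$, so $f_{1}/f_{2} \in \C$. Then $F$ is a scalar holomorphic function times a fixed $v \in \C^{2}$, and $F|_{k_{0}} \gamma = \rho(\gamma) F$ forces $v$ to be a common eigenvector for every $\rho(\gamma)$, contradicting irreducibility.

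For (ii), Schur's lemma applied to the central element $-I$ gives $\rho(-I) = \pm I$, so all weights in $M(\rho)$ share a common parity and $k_{2} - k_{1}$ is even; minimality of $k_{0}$ gives $k_{1} = k_{0}$. Suppose for contradiction $k_{1} = k_{2} = k_{0}$. Since $M_{2}(\Gamma_{0}(2)) = \C \cdot G$, we have $M_{k_{0}+2}(\rho) = \C\, G H_{1} \oplus \C\, G H_{2} = G \cdot M_{k_{0}}(\rho)$, so multiplication by $G$ is a $\C$-linear isomorphism $M_{k_{0}}(\rho) \to M_{k_{0}+2}(\rho)$. Also $D_{k_{0}}: M_{k_{0}}(\rho) \to M_{k_{0}+2}(\rho)$ is injective, since $D_{k_{0}} F = 0$ forces each component of $F$ to be a scalar multiple of $\eta^{2 k_{0}}$, making $F$ a scalar function times a fixed vector and contradicting irreducibility as in (i). Hence $D_{k_{0}}$ is also an isomorphism, and $G^{-1} D_{k_{0}}$ is an endomorphism of the finite-dimensional $\C$-vector space $M_{k_{0}}(\rho)$; a nonzero eigenvector $F_{0}$ with eigenvalue $\lambda$ gives $D_{k_{0}} F_{0} = \lambda G F_{0}$, a nontrivial $M(\Gamma_{0}(2))$-linear dependence that contradicts (i) applied to $F_{0}$. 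Thus $k_{1} < k_{2}$ and $\dim M_{k_{0}}(\rho) = 1$, so $F$ is a nonzero scalar multiple of $H_{1}$ and $\{F, H_{2}\}$ is again a basis.

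For (iii), writing $D_{k_{0}} F = a F + b H_{2}$ with $a \in M_{2}(\Gamma_{0}(2))$ and $b \in M_{k_{0}+2-k_{2}}(\Gamma_{0}(2))$, step (i) excludes $b = 0$, so $k_{2} \le k_{0} + 2$; combined with $k_{2} > k_{0}$ and the parity constraint this forces $k_{2} = k_{0} + 2$ and $b \in \C^{\times}$. Solving, $H_{2} = b^{-1}(D_{k_{0}} F - a F)$ lies in the $M(\Gamma_{0}(2))$-span of $F$ and $D_{k_{0}} F$, so $\{F, D_{k_{0}} F\}$ generates $M(\rho)$ and is a basis. The main obstacle is step (ii): the eigenvalue trick for $G^{-1} D_{k_{0}}$ is the nontrivial use of irreducibility (beyond the Wronskian-type argument in (i)) needed to rule out coincident weights, and the rest of the proof is just tracking the graded pieces of $M(\Gamma_{0}(2))$.
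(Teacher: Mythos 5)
Your proof is correct and takes essentially the same route as the paper: your eigenvector of the operator $G^{-1}D_{k_0}$ in the equal-weight case is exactly the paper's triangularization of the coefficient matrix, which produces a form $A_2$ with $D_{k_0}A_2 = \lambda G A_2$, and the remaining steps (parity from $\rho(-I) = \pm I$, forcing the second generator into weight $k_0+2$, and replacing it by $D_{k_0}F$) mirror the paper's argument. The only cosmetic difference is that you derive the contradiction from the first-order relation directly via a common eigenvector of $\rho$, whereas the paper routes it through its Lemma~\ref{irred} on linear independence of the component functions.
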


\nin In particular, $M_{k_0}(\rho) = \C F.$ Thus $F$ is determined by $\rho$ up to multiplication by a nonzero complex number. 
If $k \in \Z$ and if $X \in M_{k} (\rho)$ then $D_{k} X := \frac{1}{2 \pi i}\frac{dX}{d \tau} - \frac{k}{12}E_2 X \in M_{k+2}(\rho).$
Thus $D_{k_0 +2}(D_{k_0}F) \in M_{k_0 +4}(\rho).$ Theorem \ref{thm: basis} implies that there exist unique modular forms $C_1 \in M_{2}(\Gamma_{0}(2))$ and $C_2 \in M_{4}(\Gamma_{0}(2)) $ such that 
\begin{equation}
\label{eqn:C1C2}
D_{k_0 +2}(D_{k_0} F) = C_1 D_{k_0} F + C_2 F.
\end{equation}
In section $4$, we transform the differential equation (\ref{eqn:C1C2}) into a Riemann differential equation on the projective line minus three points by 
locally writing $F$ as a function of the Hauptmodul $\mathfrak{J}$ of $\Gamma_{0}(2),$ where 
\begin{equation}
\label{eqn:J}
 \mathfrak{J} :=\frac{3G^2}{E_4 - G^2}.
\end{equation}

\nin We then give a basis of solutions to the resulting Riemann differential equation on the projective line minus three points in terms of the Gaussian hypergeometric series $_{2}F_{1}.$ We recall that 
\begin{equation}
\label{eqn:Gauss2F1}
 _{2}F_{1}(\alpha, \beta, \gamma; z) = 1 + \sum_{n \geq 1} \frac{(\alpha)_{n} (\beta)_{n}}{  (\gamma)_{n}} \cdot \frac{z^{n}}{n!}, \textrm{where } (\alpha)_{n} := \prod_{i=0}^{n-1} (\alpha + i).
\end{equation}
We also recall the Dedekind $\eta$-function, and the generators $T$ and $S$ of $\Gamma$:  
\begin{equation}
\label{eqn:eta}
\eta = q^{\frac{1}{24}} \prod_{n=1}^{\infty} (1 - q^n).
\end{equation}
\begin{equation}
\label{eqn:T}
 T =  \left[ {\begin{array}{cc}
   1 & 1 \\
   0 & 1 \\
\end{array} } \right]
\end{equation}
\begin{equation}
\label{eqn:S}
S =  \left[ {\begin{array}{cc}
   0 & 1 \\
   -1 & 0 \\
\end{array} } \right]
\end{equation}
\nin We prove the following theorem in Section \ref{hypergeometric}, which gives explicit formulas for the component functions of $F.$

\begin{restatable}{thm}{explicitbasis}\label{thm: explicitbasis}  Let $\rho$ denote an irreducible complex representation of $\Gamma_{0}(2)$ of dimension two such that $\rho(T)$ is diagonalizable. Let $k_0$ denote the least integer for which $M_{k_0}(\rho) \neq 0$ and let $F$ denote a nonzero element in $M_{k_0}(\rho)$. 
  There exist complex numbers $A,B,$ and $r$, which are determined by $\rho$, together with a matrix $Q \in \textrm{GL}_{2}(\C)$ such that 
  $$F(\tau) = Q \left[ \begin{matrix}\eta^{2k_{0}}(\tau) (\mathfrak{J}(\tau)-1)^{r}\mathfrak{J}(\tau)^{-A}\;_{2}F_{1}(A, \frac{1}{2}+ A , 1 + A - B; \mathfrak{J}(\tau)^{-1})   \\  \\
   \eta^{2k_{0}}(\tau) (\mathfrak{J}(\tau)-1)^{r}\mathfrak{J}(\tau)^{-B}\;_{2}F_{1}(B, \frac{1}{2} + B, 1 + B - A; \mathfrak{J}(\tau)^{-1}) \end{matrix} \right]. $$
\end{restatable}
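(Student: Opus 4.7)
The plan is to convert the modular linear differential equation satisfied by $F$ into a standard Gaussian hypergeometric equation by passing to the Hauptmodul $\mathfrak{J}$.

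By Theorem \ref{thm: basis}, $\{F, D_{k_0}F\}$ is a basis for $M(\rho)$ over $M(\Gamma_0(2))$, so $F$ satisfies
\[
D_{k_0+2} D_{k_0}F \;=\; C_1\, D_{k_0}F + C_2\, F
\]
for unique $C_1 \in M_2(\Gamma_0(2))$ and $C_2 \in M_4(\Gamma_0(2))$ determined by $\rho$. Since $M(\Gamma_0(2)) = \C[G, E_4]$, one may write $C_1 = \alpha G$ and $C_2 = \beta G^2 + \gamma E_4$ for scalars $\alpha, \beta, \gamma \in \C$ depending only on $\rho$. Thus the MLDE depends on only three free parameters.

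Next, I would set $H(\tau) := \eta(\tau)^{-2k_0} F(\tau)$. Because $\eta^{2k_0}$ is nowhere vanishing on $\mathfrak{H}$ and has weight $k_0$, the function $H$ has weight zero (with a representation obtained from $\rho$ by twisting with the character of $\eta^{2k_0}$, which can be absorbed into the final $Q$). Locally in a neighborhood where $\mathfrak{J}$ is injective, $H$ may be written as a vector-valued function of $z := \mathfrak{J}^{-1}$, a local parameter that vanishes to first order at the cusp $\infty$. Using the logarithmic derivative $\frac{1}{2\pi i}\frac{d\eta}{d\tau} = \frac{E_2}{24}\eta$ together with an expression for $\frac{1}{2\pi i}\frac{d\mathfrak{J}}{d\tau}$ as a rational function of $G$ and $E_4$, the operators $D_{k_0}$ and $D_{k_0+2}$ translate into ordinary derivatives in $z$. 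The MLDE becomes a second-order linear ODE on $\mathbb{P}^1$ whose only singular points are the images under $\mathfrak{J}^{-1}$ of the two cusps and the elliptic point of $\Gamma_0(2)$, located at $z = 0, 1, \infty$; that is, a Riemann $P$-equation.

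The final step is to identify the three pairs of characteristic exponents and execute the standard reduction of a Riemann $P$-equation to the hypergeometric equation. At $z=0$ (the cusp $\infty$), the exponents are $A$ and $B$, where $e^{2\pi i A}$ and $e^{2\pi i B}$ are the eigenvalues of $\rho(T)$, which exist and can be separated thanks to the diagonalizability hypothesis. At the elliptic point (of order $2$), the ramification forces the two exponents to differ by $\tfrac{1}{2}$; this is the origin of the ``$\tfrac{1}{2}+A$'' and ``$\tfrac{1}{2}+B$'' in the hypergeometric parameters. At the remaining cusp, the exponents are determined by $\rho$ evaluated at the parabolic generator of its stabilizer, and encode the number $r$. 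Multiplying out the exponents at two of the singularities by the factors $\mathfrak{J}^{-A}$ (or $\mathfrak{J}^{-B}$) and $(\mathfrak{J}-1)^{r}$ reduces the equation to the standard hypergeometric equation with parameters $(A, \tfrac{1}{2}+A, 1+A-B)$, whose two linearly independent Frobenius solutions at $z=0$ are ${}_2F_1(A, \tfrac{1}{2}+A, 1+A-B; z)$ and $z^{B-A}\,{}_2F_1(B, \tfrac{1}{2}+B, 1+B-A; z)$. Since $\rho(T)$ is diagonalizable, one chooses $Q \in \textrm{GL}_2(\C)$ so that $Q^{-1}\rho(T)Q$ is diagonal; in this basis the two components of $Q^{-1}F$ have leading behavior $q^A$ and $q^B$ at the cusp and therefore, after multiplying back by $\eta^{2k_0}$, must coincide (up to nonzero scalars absorbable into $Q$) with the two asserted expressions. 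I expect the main obstacle to be the careful exponent bookkeeping at the elliptic point and the second cusp: one must verify that the indicial equation at the elliptic point genuinely produces exponents differing by $\tfrac{1}{2}$ and that the resulting hypergeometric parameters collapse to the clean form $(A, \tfrac{1}{2}+A, 1+A-B)$. This requires explicit local analysis of the modular forms $G$ and $E_4$ and of the Hauptmodul $\mathfrak{J}$ at those points.
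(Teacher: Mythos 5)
Your route is the same as the paper's: divide by $\eta^{2k_0}$, transport the order-two MLDE to an ODE in the Hauptmodul with regular singular points at the two cusps and the elliptic point, reduce to Gauss normal form, take the two ${}_2F_1$ Frobenius solutions at the cusp $\infty$, and match them with the components of $F$ written in a basis diagonalizing $\rho(T)$.

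There is, however, one genuine gap. Both the Frobenius basis you quote at $z=0$ (namely ${}_2F_1(A,\tfrac12+A,1+A-B;z)$ and $z^{B-A}\,{}_2F_1(B,\tfrac12+B,1+B-A;z)$) and your final matching step (``a component with the right leading behavior must coincide, up to a scalar, with one of the two solutions'') are valid only when the exponent difference at that singular point is not an integer; otherwise the second solution can be logarithmic, the parameter $1+A-B$ can be a nonpositive integer, and a solution with a prescribed leading exponent need not be proportional to a single Frobenius solution. You attribute the needed separation of exponents to the diagonalizability of $\rho(T)$, but diagonalizability does not give distinct eigenvalues. What is actually required, and what the paper proves, is that irreducibility forces the eigenvalues of $\rho(T)$ to be distinct (if $\rho(T)$ were scalar, any eigenvector of the image of the other generator of $\Gamma_0(2)$ would span an invariant line), and then that the cusp exponents of $\eta^{-2k_0}F$ are congruent modulo $\Z$ to the numbers $m_i$ with $e^{2\pi i m_i}$ the eigenvalues of $\rho(T)$ (up to the shift coming from the character of $\eta^{2}$), so the exponent difference is congruent to $m_1-m_2\not\equiv 0 \pmod{\Z}$ and hence $A-B\notin\Z$. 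Without this argument the conclusion of the theorem, in the specific ${}_2F_1$ form stated, is not justified.

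Two smaller bookkeeping corrections, which you partly anticipate but which are stated incorrectly: the exponents at $z=0$ are not $A$ and $B$ but $A-r$ and $B-r$ (the factor $(\mathfrak{J}-1)^{r}$ contributes $r$ at the cusp $\infty$ as well, since $\mathfrak{J}\to\infty$ there), and they are attached to the eigenvalues of the twisted representation $\rho\otimes\omega^{-k_0}$ rather than of $\rho$; the $\eta^{2k_0}$-twist changes the representation and cannot be absorbed into the constant matrix $Q$, although this does not harm the argument since only the components' scalar ODE is used. Consequently the components of $Q^{-1}F$ have leading behavior $q^{\frac{k_0}{12}+A-r}$ and $q^{\frac{k_0}{12}+B-r}$, not $q^{A}$ and $q^{B}$; with this normalization fixed, your reduction to the hypergeometric equation and the matching of leading exponents proceed exactly as in the paper.
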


\nin The way that $A,B,$ and $r$ are determined by $\rho$ is explained in detail in section $4$. We shall also show in section four that the hypotheses on $\rho$ in \cref{thm: explicitbasis} imply that $A - B \not \in \Z$ and thus that $A \neq B.$ \\

\nin The Fourier coefficients of $F$ need not be algebraic numbers. We resolve this problem by defining a vector-valued function $F'$, which is obtained by scaling the component functions of $F$ so that their leading Fourier coefficients equal one. Under suitable conditions on $\rho$, the Fourier coefficients of $F'$ will be algebraic numbers. The function $F'$ is a vector-valued modular form for a representation, which we denote by $\rho'.$ We explain in section $5$ that $\rho'$ is conjugate to $\rho$. We prove the following theorem concerning $F'$ and all vector-valued modular forms for $\rho':$ 

\begin{restatable}{thm}{algebraicbasis} \label{thm: algebraicbasis} If $\rho(T)$ has finite order and if $r \in \overline{\Q}$ then all of the Fourier coefficients of both of the component functions of $F'$ are elements of 
$\Q(r)$ and are therefore algebraic numbers. Moreover, for each $k \in \Z$, there exists a basis of $M_{k}(\rho')$ consisting of vector-valued modular forms whose component functions have Fourier coefficients which are elements of $\Q(r)$ and thus are algebraic numbers. 
\end{restatable}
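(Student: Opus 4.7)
The plan is to apply Theorem \ref{thm: explicitbasis} and expand each factor in the explicit formula for $F$ as a $q$-series, tracking the field containing the Fourier coefficients at every step. The first crucial observation is that the hypothesis that $\rho(T)$ has finite order forces the hypergeometric parameters $A$ and $B$ of Theorem \ref{thm: explicitbasis} to lie in $\Q$: indeed, as explained in Section $4$, $A$ and $B$ are built from the local exponents at the cusp $\infty$ of the modular linear differential equation (\ref{eqn:C1C2}), and those exponents are congruent modulo $\Z$ to $\frac{1}{2\pi i}\log \chi$ as $\chi$ ranges over the eigenvalues of $\rho(T)$; finite order of $\rho(T)$ forces each such $\chi$ to be a root of unity, so $A, B \in \Q$.

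Next, I would expand the three building blocks at the cusp $\infty$. From (\ref{eqn: E2})--(\ref{eqn:G}), $G$ and $E_4$ have integer Fourier coefficients, so by (\ref{eqn:J}) the Hauptmodul has a simple pole at $\infty$ of the form $\mathfrak{J}(\tau) = \lambda\, q^{-1}(1 + \sum_{n \ge 1} a_n q^n)$ with $\lambda \in \Q^{\times}$ (in fact $\lambda = 1/64$) and $a_n \in \Q$. Consequently $\mathfrak{J}^{-1} \in q\,\Q[[q]]$, and since $A \in \Q$ the power $\mathfrak{J}^{-A} = \lambda^{-A} q^{A}(1 + \sum_{n \ge 1} b_n q^n)$ has $b_n \in \Q$; similarly $(\mathfrak{J}-1)^r = \lambda^r q^{-r}(1 + \sum_{n \ge 1} c_n q^n)^r$, and expanding this outer $r$-th power via the generalized binomial series produces coefficients polynomial in $r$ with rational coefficients, hence in $\Q(r)$. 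The hypergeometric factor $_{2}F_{1}(A, \frac{1}{2}+A, 1+A-B; \mathfrak{J}^{-1})$ has rational parameters (using $A, B \in \Q$) and is evaluated at an element of $q\,\Q[[q]]$, so it lies in $\Q[[q]]$. Finally, $\eta^{2k_0} = q^{k_0/12} \sum_{n \ge 0} d_n q^n$ with $d_n \in \Z$. Multiplying everything together, each entry of the column in Theorem \ref{thm: explicitbasis} has the shape $\lambda^{r - A_i}\, q^{k_0/12 + A_i - r}(1 + \sum_{n \ge 1} e_{i,n} q^n)$, with $A_i \in \{A, B\} \subset \Q$ and $e_{i,n} \in \Q(r)$. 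Dividing by the leading constant $\lambda^{r - A_i}$ is precisely the normalization that defines $F'$, so the component functions of $F'$ have Fourier coefficients in $\Q(r)$, yielding the first assertion.

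For the second assertion, Theorem \ref{thm: basis} provides the $M(\Gamma_0(2))$-module basis $\{F',\, D_{k_0}F'\}$ for $M(\rho')$, so every element of $M_k(\rho')$ can be written as $mF' + m' D_{k_0}F'$ with $m, m'$ weight-appropriate elements of $M(\Gamma_0(2))$. The differential operator $D_{k_0}$ preserves the property of having Fourier coefficients in $\Q(r)$, because $\frac{1}{2\pi i}\frac{d}{d\tau}$ sends $q^a \mapsto a q^a$ for $a \in \Q$ and $E_2$ has integer Fourier coefficients; hence $D_{k_0}F'$ also has components with Fourier coefficients in $\Q(r)$. Since $M(\Gamma_0(2)) = \C[G, E_4]$ with $G$ and $E_4$ defined over $\Q$, the space $M_k(\Gamma_0(2))$ admits a $\C$-basis of monomials in $G$ and $E_4$ with $\Q$-Fourier coefficients; multiplying these monomials into $\{F',\, D_{k_0}F'\}$ yields a $\C$-basis of $M_k(\rho')$ whose members have component functions with Fourier coefficients in $\Q(r)$, completing the proof. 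The main obstacle is the first step: carefully confirming rationality of $A$ and $B$ via the construction in Section $4$, and then verifying that the potentially transcendental scalar $\lambda^{r - A_i}$ (transcendental over $\Q$ in general by Gelfond--Schneider) is exactly the leading Fourier coefficient, so that the normalization defining $F'$ eliminates it cleanly.
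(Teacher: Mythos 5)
Your argument has a genuine error at the step you yourself flag as the ``first crucial observation.'' It is not true that $\rho(T)$ having finite order forces the hypergeometric parameters $A$ and $B$ into $\Q$. You have conflated $A,B$ with the local exponents of the differential equation at the cusp $\infty$. In the construction of Section 4, the exponents at $\infty$ (for the weight-zero equation satisfied by $F_0$) are $l_1$ and $l_2$, and these are indeed rational when $\rho(T)$ has finite order, since $l_i \equiv m_i - \tfrac{k_0}{6} \pmod{\Z}$. But the hypergeometric parameters are $A = r + l_1$ and $B = r + l_2$, where $r$ is an indicial root at the \emph{other} singularity $\mathfrak{J} = 1$, i.e.\ a root of $r(r-1) + \tfrac{2-3a}{3}r + (b+4c) = 0$. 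Thus what finiteness of $\rho(T)$ gives is $A - r,\, B - r \in \Q$ (hence $\Q(A)=\Q(B)=\Q(r)$ and $A-B\in\Q$), not $A,B\in\Q$. In the cases the paper actually cares about --- $[\Q(r):\Q]=2$, as in Assumption \ref{HYP} and \cref{thm: inducedmain} --- your claim is not merely unproved but false: $A$ and $B$ are quadratic irrationalities. The downstream claims inherit the error: the coefficients of $\mathfrak{J}^{-A}$ and of $_{2}F_{1}(A,\tfrac12+A,1+A-B;\mathfrak{J}^{-1})$ lie in $\Q(r)$, not in $\Q$; and your Gelfond--Schneider worry about $\lambda^{r-A}$ is a phantom created by the same confusion, since $r - A = -l_1 \in \Q$ makes $64^{A-r}$ a rational power of $64$, hence algebraic.

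The good news is that the error is repairable and the repaired argument lands where the paper does: once you replace ``$A,B\in\Q$'' by ``$A-r,\,B-r\in\Q$, so $A,B\in\Q(r)$ and $A-B\in\Q$,'' every series you expand --- $(\mathfrak{J}-1)^r$, $\mathfrak{J}^{-A}$, the $_{2}F_{1}$ factor (whose coefficients $\frac{(A)_n(\frac12+A)_n}{(1+A-B)_n\,n!}$ now lie in $\Q(r)$ because $1+A-B\in\Q$), and $\eta^{2k_0}$ --- has coefficients in $\Q(r)$, and dividing by the algebraic leading constant $64^{A-r}$ (resp.\ $64^{B-r}$) is exactly the normalization defining $F'$. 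This is in substance the paper's proof, which makes the same bookkeeping explicit through the formulas of Theorem \ref{sequence} for $h(K)$ and $\widetilde{h}(K)$ in terms of $\mathfrak{K}=64\mathfrak{J}\in\frac1q\Z[[q]]^\times$. Your second paragraph (the basis $\{F',D_{k_0}F'\}$, stability of $\Q(r)$-coefficients under $D_{k_0}$ because $E_2\in\Z[[q]]$ and the exponents $\tfrac{k_0}{12}+l_i+K$ are rational, and monomials $G^aE_4^b$) is correct and matches the paper's argument.
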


\nin  We are most interested in studying the arithmetic of vector-valued modular forms for $\rho$ when the modular forms $C_1 \in M_{2}(\Gamma_{0}(2)) =\C G$ and $C_2 \in M_{4}(\Gamma_{0}(4)) = \C G^{2} \bigoplus \C E_4$ in equation (\ref{eqn:C1C2}) have rational Fourier coefficients. Let $a,b,c \in \C$ denote the unique complex numbers such that 
\begin{equation}
\label{eqn:abc}
C_1 = -aG \textrm{ and } C_2 = -bG^2 - cE_4.
\end{equation}
\nin We note that $C_1, C_2 \in \Q[[q]]$ if and only if $a,b,c \in \Q.$ If $\rho(T)$ has finite order then we explain in Section \ref{arithmetic} that $a,b,c \in \Q$ if and only if $c \in \Q.$ In this case, $[\Q(r): \Q] \leq 2.$ In this paper, our focus is the case when $[\Q(r): \Q] = 2.$ \\

\nin Let $I$ denote the identity matrix and let 
\begin{equation}
\label{eqn:U}
 U =  \left[ {\begin{array}{cc}
   1 & 0 \\
   1 & 1 \\
\end{array} } \right]. 
\end{equation}

\nin It is well-known that $\Gamma(2) = \langle T^2$ , $U^{2}, -I \rangle$ and 
$\Gamma_{0}(2) = \langle  T, U^2, -I \rangle.$ The group $\Gamma(2)/\langle -I \rangle$ is freely generated by 
$T^2 \langle -I \rangle$ and $U^2 \langle -I \rangle.$ Thus for any $\xi_1, \xi_2 \in \C,$ there exists a character $\psi$ of $\Gamma(2)$ for which $e^{2 \pi i \xi_1} = \psi(T^2), e^{2 \pi i \xi_2}= \psi(U^2),$ and $1 = \psi(-I).$ We now state the main result of the paper. 

\begin{restatable}{thm}{inducedmain} \label{thm: inducedmain}
 Let $\psi$ denote a character of $\Gamma(2)$ such that $\psi(-I) = 1$ and such that there exists some $\xi_1 \in \Q$ and some algebraic number $\xi_{2}$ of degree two for which $e^{2 \pi i \xi_{1}} = \psi(T^2)$ and $e^{2 \pi i \xi_{2}} = \psi(U^2).$ Let $\rho = \textrm{Ind}_{\Gamma(2)}^{\Gamma_{0}(2)} \psi$. Let $k \in \Z$ and $Z \in M_{k}(\rho')$ whose component functions $Z_1$ and $Z_2$ have the property that all of their Fourier coefficients are algebraic numbers. Then the sequence of the denominators of the Fourier coefficients of $Z_1$ and the sequence of the denominators of the Fourier coefficients of $Z_2$ are unbounded. \\

\nin Let $M$ denote the square-free integer such that $\Q(\sqrt{M}) = \Q(\xi_2).$ If $p$ is any sufficiently large prime such that $M$ is not a quadratic residue mod $p$ then $p$ divides the denominator of at least one Fourier coefficient of $Z_1$ and $p$ divides the denominator of at least one Fourier coefficient of $Z_2.$ \end{restatable}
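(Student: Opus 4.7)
The plan is to reduce the claim for arbitrary $Z \in M_k(\rho')$ to a concrete $p$-adic statement about the Fourier coefficients of the explicit basis element $F'$, and then to extract unbounded $p$-adic denominators at inert primes from the hypergeometric formula in Theorem~\ref{thm: explicitbasis}. By Theorem~\ref{thm: basis}, $M(\rho') = M(\Gamma_{0}(2))\,F' \oplus M(\Gamma_{0}(2))\, D_{k_0} F'$, so any $Z$ with algebraic Fourier coefficients can be written $Z = h_1 F' + h_2 D_{k_0} F'$ with $h_1,h_2 \in M(\Gamma_0(2))$. Using $M(\Gamma_0(2)) = \C[G,E_4]$ with $G,E_4 \in \Z[[q]]$, the forms $h_1,h_2$ have algebraic Fourier coefficients, hence bounded $p$-adic denominators away from a finite set of primes. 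Since $D_{k_0}$ only involves differentiation (multiplying the $n$-th coefficient by $n$) and subtraction of a $\tfrac{k_0}{12}E_2$-multiple, the denominator profile of $D_{k_0}F'$ at primes $p > k_0$ coincides with that of $F'$ up to finitely many additional primes. Thus it suffices to show that $F'_1$ and $F'_2$ have $p$-adic denominators divisible by every sufficiently large inert prime $p$, and that this divisibility cannot be cancelled in $h_1 F' + h_2 D_{k_0}F'$ component by component.

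By Theorem~\ref{thm: explicitbasis} and the rescaling of Theorem~\ref{thm: algebraicbasis}, the components of $F'$ are scalar multiples of $\eta^{2k_{0}}(\mathfrak{J}-1)^{r} \mathfrak{J}^{-A}\, {}_2F_1(A,\tfrac{1}{2}+A,1+A-B;\mathfrak{J}^{-1})$ and its counterpart with $A$ and $B$ exchanged. The hypothesis $\rho = \Ind_{\Gamma(2)}^{\Gamma_0(2)}\psi$ with $\xi_1 \in \Q$ and $\xi_2$ algebraic of degree two will force $A$ and $B$ to be Galois-conjugate elements of $\Q(\sqrt{M})$, using the explicit description in Section~4 of how $A,B,r$ are determined by $\rho$; note that $A - B \notin \Z$ by Theorem~\ref{thm: explicitbasis}. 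Since $\mathfrak{J}(\tau) = q^{-1} + O(1)$ at the cusp $i\infty$, the $q^{n}$-Fourier coefficient of each component of $F'$ equals the $n$-th hypergeometric coefficient
\[
c_n(A,B) := \frac{(A)_n\,(\tfrac{1}{2}+A)_n}{(1+A-B)_n\, n!}
\]
plus a $\Z[\tfrac{1}{N}]$-linear combination of preceding $c_m(A,B)$, for some fixed integer $N$ depending only on $\rho$, and analogously with $A,B$ swapped for the second component.

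The main arithmetic input is then the $p$-adic behaviour of $c_n(A,B)$ at primes $p$ inert in $\Q(\sqrt{M})$, equivalently at sufficiently large $p$ for which $M$ is not a quadratic residue. For such $p$, the elements $A,B$ lie in $\Z_{p^{2}}\setminus\Z_p$ and are swapped by the Frobenius of $\Q_{p^{2}}/\Q_p$, so the Pochhammer symbol $(1+A-B)_n$ admits no cancellation against the numerators, while $(A)_n$ and $(\tfrac{1}{2}+A)_n$ can be analysed via Morita's $p$-adic Gamma function. In the spirit of Hong and Wang~\cite{hong}, one obtains that $v_p(c_n(A,B)) \to -\infty$ along a subsequence of positive density, and the Galois involution $\sqrt{M}\mapsto -\sqrt{M}$ gives the parallel statement for $c_n(B,A)$; in particular, for every sufficiently large inert prime $p$ there are indices $n$ with $p$ dividing the denominator of both $c_n(A,B)$ and $c_n(B,A)$.

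To conclude, one translates this into unbounded denominators for $Z_1$ and $Z_2$ and identifies the explicit prime divisors. The lower-triangular relation between hypergeometric and Fourier coefficients propagates the $p$-adic pole from $c_n$ to $F'_1$ and $F'_2$. The main obstacle will then be to show that this unbounded-denominator behaviour cannot be cancelled by $h_1$ and $h_2$: since their Fourier coefficients have bounded $p$-adic denominator at all but finitely many $p$, a cancellation of infinitely many $p$-adic poles in $Z_i = h_1 F'_i + h_2 (D_{k_0}F')_i$ would force the components $F'_i$ and $(D_{k_0}F')_i$ to satisfy a restrictive $p$-adic relation that should be ruled out quantitatively by the density estimates for $v_p(c_n)$ from Hong and Wang~\cite{hong}. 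I expect this quantitative propagation step to be the crux of the argument, as it requires uniformity in $Z$ and must absorb the finitely many bad primes coming from the change-of-basis matrix $Q$ of Theorem~\ref{thm: explicitbasis}.
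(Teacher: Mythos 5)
There is a genuine conceptual error at the heart of your arithmetic step. You assert that the induction hypothesis forces $A$ and $B$ to be Galois-conjugate elements of $\Q(\sqrt{M})$, swapped by Frobenius at an inert prime. In fact $A = r + l_1$ and $B = r + l_2$ with $l_1, l_2 \in \Q$ (because $\rho(T)$ has finite order), so $A - B = l_1 - l_2$ is \emph{rational}, and for an induced representation it lies in $\tfrac{1}{2}\Z \setminus \Z$; Galois conjugates would differ by a rational multiple of $\sqrt{M}$. This is not a cosmetic slip: the whole mechanism producing denominators in the paper is that $(1+A-B)_m = \prod_{j=1}^{m}(u+jv)/v^m$ with $A-B = u/v$, $v=2$, so the prime $p_K = u+Kv$ enters the denominator of exactly one term $g(K,0)$, and only to the first power, at the first index $K$ where it can appear. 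Meanwhile the quadratic-irrational Pochhammer factors $(2A)_{2m}$ and $(-r)_n$ (you omit the $(-r)_n$ coming from $(\mathfrak{J}-1)^{r}$) are handled by an elementary norm argument (Lemma \ref{quadratic}): an inert prime never divides their numerators, so no cancellation can occur in the finite sum $f(K)=\sum_{m+n=K}g(m,n)$. If $A-B$ were irrational as you claim, that same norm argument would kill the factor $(1+A-B)_m$ as a source of denominators, and your asserted blow-up $v_p(c_n) \to -\infty$ along a positive-density set (attributed to Morita's Gamma function and Hong--Wang) is both unproved and unnecessary; the paper only needs, and only proves, a single well-controlled occurrence of $p$ in one coefficient. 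You also do not verify the hypotheses that make the coefficients lie in $\Q(\xi_2)$ at all, i.e.\ that $c \in \Q$, $[\Q(r):\Q]=2$ and $\Q(r)=\Q(\xi_2)$; in the paper this is \cref{thm: examples}, proved by expanding $F_0|_0 S$ at the cusp $0$ and using the constant term of $G|_2 S$, an input absent from your outline.

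The second gap is the step you yourself flag as the crux: ruling out cancellation when passing from $F'$ to a general $Z = h_1 F' + h_2 D_{k_0}F'$. Your proposal to control this by quantitative density estimates on $v_p(c_n)$ is not an argument and would have to contend with possible cancellation between $h_1 F'_i$ and $h_2 (D_{k_0}F')_i$, each of which may separately have unbounded denominators. The paper avoids this entirely by Marks' elimination trick: it first shows (Lemma \ref{freemoduleL}, by induction on Fourier coefficients using the invertible matrix $\left[\begin{smallmatrix}1 & A-r\\ 1 & B-r\end{smallmatrix}\right]$) that the coefficients $m_i$ of $Z$ and of $D_kZ$ in the basis $F', D_{k_0}F'$ have Fourier coefficients in $L$, then multiplies by the adjugate to obtain $m_4 Z - m_2 D_k Z = (m_1m_4 - m_2m_3)F'$. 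Since $N^2(m_1m_4-m_2m_3) \in \overline{\Z}[[q]]$ and the \emph{first} non-$p$-integral coefficient $d(K)$ of $F'_1$ cannot be cancelled by multiplication with an integral power series, $p$-integrality of all coefficients of $Z_1$ would force $p$-integrality of $d(K)$, a contradiction. Without this identity (or a substitute), your reduction does not close.
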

\begin{remark} We explain in \cref{thm: examples} that the hypothesis of \cref{thm: inducedmain} implies that the
 field $\Q(r)$, which is determined by $\rho$, is a quadratic field and that $\Q(r) = \Q(\xi_2).$ It follows from \cref{thm: algebraicbasis} that the hypothesis of \cref{thm: inducedmain} implies that for any $k \in \Z$, there exists a basis of $M_{k}(\rho')$ whose Fourier coefficients are elements of $\Q(\xi_2).$ \end{remark}

\section{Acknowledgements} 

\nin I thank Geoff Mason for teaching me about vector-valued modular forms and for many inspiring discussions.
I thank Cameron Franc for his eagerness to discuss vector-valued modular forms and for his encouragement. I thank Noriko Yui for the invitation to discuss this work in her seminar and for her interest in this work.  I thank the anonymous referee for many helpful suggestions. One of the referee's questions led me to formulate and prove \cref{thm: examples} and to give a more explicit version of \cref{thm: inducedmain}.

\section{The modular derivative and a basis for vector-valued modular forms} \label{derivative}
\subsection{Preliminaries on vector-valued modular forms} 

\nin We now describe what it means for a vector-valued modular form $F$ of integral weight $k$ to be holomorphic at a cusp of  $H$\textbackslash$(\mathfrak{H} \bigcup \mathbb{P}^{1}(\Q))$. Our exposition closely follows \cite{survey}. As $H$ is a finite index subgroup of $\Gamma$,  the subgroup $\bigcap \limits_{\gamma \in \Gamma} \gamma^{-1}H \gamma$ is a finite index subgroup of $\Gamma.$  Therefore there exists 
a smallest positive integer $N$ for which $T^{N} \in \bigcap \limits_{\gamma \in \Gamma} \gamma^{-1}H \gamma.$ We now fix some $\gamma \in \Gamma$ and we will explain what it means for $F$ to be holomorphic at the cusp $\gamma \cdot \infty$. Let  $h \in H$ such that $\gamma T^{N} = h \gamma$. 
Then $$(F|_{k} \gamma)|_{k} T^{N} = F|_{k}(\gamma T^{N}) = F|_{k}(h \gamma) = (F|_{k}h)|_{k} \gamma  = (\rho(h)F)|_{k} \gamma = \rho(h) (F|_{k}\gamma).$$ Let $A$ be an invertible matrix such that 
$A \rho(h) A^{-1}$ is in \textit{modified} Jordan canonical form. A matrix is in \textit{modified} Jordan canonical form if it is a block diagonal matrix whose blocks are of the form 
$$\begin{bmatrix} \lambda &            &             &                 \\
                             \lambda & \ddots &             &                 \\
                                 &          \ddots & \ddots   &                  \\
                             &            &                         \lambda  & \lambda  \end{bmatrix}.$$ 
\nin We let the above matrix denote a block for the modified Jordan canonical form of $A \rho(h) A^{-1}.$
The number $\lambda$ is an eigenvalue of $\rho(h)$. We note that $$((AF)|_{k} \gamma)|_{k} T^{N} = A(F|_{k} \gamma T^{N}) = A \rho(h) (F|_{k} \gamma) = A \rho(h) A^{-1}(AF)|_{k} \gamma.$$
Let $\mu$ denote a complex number for which $\lambda = e^{2 \pi i \mu}$ and let $q_N = e^{\frac{2 \pi i \tau}{N}}.$  Mason and Knopp have proven (Theorem 2.2 in \cite{logvvmf}) that there exist functions $h_i$ such that the component functions of $(AF)|_{k} \gamma$ corresponding to the block above (whose row and column size we denote by $m$) have the form 
$$\begin{bmatrix} h_1 \\
h_2 + \tau h_1 \\
h_3 + \tau h_2 + \begin{binom}{\tau}{2} \end{binom} h_1 \\
\vdots  \\
h_m + \tau h_{m-1} + \begin{binom}{\tau}{2} h_{m-2}\end{binom} + \cdots + \begin{binom}{\tau}{m-1}\end{binom}h_1
\end{bmatrix}$$ where each $h_i$ has a Fourier expansion of the form $h_i(\tau) = \sum_{n \in \Z} a_{n}(i) q_{N}^{n + \mu}.$ \\

\nin We say that $F$ (or equivalently $AF$) is \textbf{meromorphic, holomorphic, or cuspidal} at the cusp $\gamma \cdot \infty$ if the $q_N$-expansion of each $h_i$ has respectively only finitely many nonzero coefficients $a_n$ 
for which $\textrm{Re}(n + \mu) < 0$, no nonzero coefficients $a_n$ for which  $\textrm{Re}(n + \mu) < 0$, and no nonzero coefficients $a_n$ for which $\textrm{Re}(n + \mu) \leq 0.$
 We note that this definition is independent of the choice of $\mu.$ If at least one of the component functions of $AF$ contains a term with a nonzero power of $\tau$ then we say that $AF$ and $F$ are \textbf{logarithmic vector-valued modular forms}. The focus of this paper is the study of vector-valued modular forms which are not logarithmic. We observe that if $F$ has a holomorphic $q$-expansion at the cusp $\infty$ which is not logarithmic then $\rho(T^{N})$ is diagonalizable. If $H = \Gamma_{0}(2)$ then this condition is equivalent to requiring that $\rho(T)$ is diagonalizable. \textit{We therefore only consider representations $\rho$ of $\Gamma_{0}(2)$ for which $\rho(T)$ is diagonalizable.} 
\subsection{An explicit basis}

\nin Theorem \ref{free} states that if $M(H)$ is a polynomial ring in two variables then $M(\rho)$ is a free $M(H)$-module whose rank equals the dimension of $\rho.$   We recall that $E_2(\tau): = 1 - 24 \sum_{n=1}^{\infty} \sigma(n) q^{n}$, $G(\tau) := -E_2(\tau) + 2E_2(2 \tau)$, and  that $M(\Gamma_{0}(2)) = \C[G, E_4].$ The purpose of this section to use the modular derivative $D_{k},$ which we define below, to describe a basis for $M(\rho)$ as a $M(\Gamma_{0}(2))$-module when $\rho$ is a two-dimensional irreducible representation of $\Gamma_{0}(2)$. \\

\nin  Let $k \in \Z.$  If $A$ is a holomorphic or meromorphic function from $\mathfrak{H}$ to $\C^{n}$ then: 
\begin{equation}
\label{Dk} D_{k} A := \frac{1}{2 \pi i}\frac{dA}{d \tau} - \frac{k}{12}E_2 A = q \frac{d}{dq} A - \frac{k}{12}E_2 A.
\end{equation} 

\nin  The modular derivative $D_{k}$ has the lovely property (section $10.5$ in \cite{lang}) that for all $\gamma \in \Gamma,$ $D_{k}(A|_{k} \gamma) = (D_{k}A)|_{k+2} \gamma.$ This property implies that if $F \in M_{k}(\rho)$ then
$D_{k} F \in M_{k+2}(\rho)$ and if $m \in M_{k}(H)$ then $D_{k} m \in M_{k+2}(H).$ The linear maps $D_{k}: M_{k}(\rho) \rightarrow M_{k+2}(\rho)$  and $D_{k}: M_{k}(H) \rightarrow M_{k+2}(H)$ are quite useful. We shall make use of the following notation:
\begin{equation}
\label{eqn:theta} \theta := D_{0} = q \frac{d}{dq} = \frac{1}{2 \pi i}\frac{d}{d \tau}
\end{equation} 

\nin The goal of this section is to prove \cref{thm: basis}. 

\basis*

\nin We note that the fact that $M(\rho)$ is a free $M(\Gamma_{0}(2))$-module implies that if $k$ is sufficiently negative then $M_{k}(\rho) = 0.$
We will use the following two results to prove \cref{thm: basis}.

\begin{lemma} \label{irred} Let $\rho$ be an irreducible representation of a finite index subgroup $H$ of $\Gamma$.
Let $F$ be a nonzero vector-valued modular form of weight $k \in \Z$ with respect to $\rho$. Then the component functions of $F$ are linearly independent over $\C$. 
\end{lemma}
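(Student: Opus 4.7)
My plan is to argue by contradiction via the standard trick of producing a nontrivial $\rho$-invariant subspace from a linear relation among components, and then invoking irreducibility.

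First I would consider the subspace
$$W := \mathrm{span}_{\C}\{F(\tau) : \tau \in \mathfrak{H}\} \subseteq \C^{\mathrm{dim}\,\rho}.$$
Suppose, toward a contradiction, that the components $F_1,\dots,F_n$ of $F$ are linearly dependent over $\C$. Then there exists a nonzero covector $c \in (\C^n)^{*}$ with $c \cdot F(\tau) = 0$ for all $\tau$. This forces $W$ to be contained in the proper hyperplane $\ker c$, so $W \subsetneq \C^n$. On the other hand, $W \neq 0$ because $F$ is nonzero as a function. The heart of the argument is thus to show $W$ is $\rho$-invariant, which then contradicts the irreducibility of $\rho$.

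The key step is the invariance of $W$. For $\gamma = \left[\begin{smallmatrix} a & b \\ c & d \end{smallmatrix}\right] \in H$ and $\tau \in \mathfrak{H}$, the transformation law \eqref{eqn: transformation} gives
$$\rho(\gamma) F(\tau) \;=\; (F|_k \gamma)(\tau) \;=\; (c\tau + d)^{-k}\, F(\gamma \tau).$$
Since $\gamma \tau \in \mathfrak{H}$, we have $F(\gamma \tau) \in W$, and because $W$ is a linear subspace closed under scalar multiplication, $(c\tau + d)^{-k} F(\gamma \tau) \in W$. Hence $\rho(\gamma) F(\tau) \in W$ for every $\tau$, and varying $\tau$ over a generating set of $W$ (with linearity of $\rho(\gamma)$) shows $\rho(\gamma) W \subseteq W$. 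Thus $W$ is $H$-stable.

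By irreducibility of $\rho$, the only $\rho$-invariant subspaces of $\C^{\mathrm{dim}\,\rho}$ are $0$ and the full space. Since $W$ is proper and nonzero, this is the desired contradiction, so the components of $F$ must be linearly independent. The main obstacle is essentially bookkeeping: one must be careful that $W$ is defined using actual values $F(\tau)$ (not just components) so that the transformation law directly supplies invariance, rather than, say, trying to translate a linear dependence among components directly into an invariant subspace (which amounts to the same thing but is slightly more awkward to phrase).
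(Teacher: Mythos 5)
Your proof is correct, but it takes a route dual to the paper's. You work on the value side: you form $W=\textrm{span}_{\C}\{F(\tau):\tau\in\mathfrak{H}\}\subseteq\C^{n}$, observe that the transformation law gives $\rho(\gamma)F(\tau)=(c\tau+d)^{-k}F(\gamma\tau)\in W$, so $W$ is invariant under $v\mapsto\rho(\gamma)v$, and note that a linear dependence among the components would trap $W$ in a proper hyperplane while $F\neq 0$ forces $W\neq 0$, contradicting irreducibility. The paper instead works on the function side: it makes the span $E$ of the component functions $f_1,\dots,f_n$ into a right $H$-module via $f\mapsto f|_{k}g$, defines an $H$-module map $\psi$ from the module furnishing $\rho$ onto $E$ with $\psi(e_i)=f_i$, and uses irreducibility together with $F\neq 0$ to conclude $\ker\psi=0$; injectivity of $\psi$ is precisely the linear independence. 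The two arguments are essentially dual (a dependence covector in your picture lies in the annihilator of $W$, which corresponds to $\ker\psi$), and both use nothing beyond irreducibility. Yours is a bit more economical, since it avoids verifying the module structure on $E$ and the equivariance of $\psi$; the paper's yields slightly more, namely that the component functions of $F$ realize a copy of $\rho$ under the slash action, which is a useful statement in its own right. Two minor points: you invoke irreducibility in the standard left-action form (no nonzero proper $\rho(\gamma)$-invariant subspace of column vectors), whereas the paper phrases it through the right module on the representation space; these are equivalent via the annihilator correspondence, so nothing is lost, but it deserves a word. Also avoid reusing the letter $c$ both for your covector and for the lower-left entry of $\gamma$.
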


\begin{proof} Let $n$ denote the dimension of $\rho$, let $f_1,...,f_n$ denote the component functions of $F$, and let $E$ denote the $\C$-span of $f_1,...f_n.$ We view $E$ as a right $H$-module via the action: $g \cdot f_i := f_i|_{k} g.$
The fact that $E$ is a $H$-module is immediate from the fact that $F$ is a vector-valued modular form. 
Let $W$ denote the right $H$-module that furnishes $\rho.$ This means that $(w \cdot \gamma_{1}) \cdot \gamma_{2} = w \cdot (\gamma_1 \gamma_2)$ for all $w \in W$ and $\gamma_{1}, \gamma_{2} \in H$ and that there exists a $\C$-basis $e_1,...,e_n$ of $W$ such that for every $i$,  $e_i \cdot \gamma = \sum_{j =1}^{n} \rho(\gamma)_{i,j} e_{j}.$
We define a map $\psi: W \rightarrow E$ by setting $\psi(e_i) = f_i$ and extending linearly. 
We now check that the map $\psi$ is a map of $H$-modules. Let $g \in H$ and let $g_{i,j}$ denote the $i$-th row and $j$-th column entry of $\rho(g).$ We have that 
$\psi(e_i) \cdot g =  f_i  \cdot g = f_i|_{k} g = \sum_{j=1}^{n}\rho(g)_{i,j} f_{j} = \sum_{j=1}^{n} g_{i,j}  \psi(e_i) = \psi(\sum_{j=1}^{n} g_{i,j} e_i) = \psi(e_i \cdot g).$
As $\psi$ is a $H$-module map, $\textrm{ker } \psi$ is a $H$-submodule of $W$. 
As $\rho$ is irreducible, $\textrm{ker } \psi$ is equal to either $0$ or $W.$
As each $f_i = \psi(e_i)$ and $F \neq 0$, we have that $E \neq 0.$ Thus $\textrm{ker } \psi \neq W$ and so $\psi$ is injective. It is clear that $\psi$ is surjective and thus $\psi$ is an isomorphism. Hence the elements $f_1,...,f_n$ are linearly independent over $\C.$
\end{proof}

\begin{lemma} \label{parity} If $\rho$ is an irreducible representation of a finite index subgroup $H$ of $\Gamma$ for which $-I \in H$ then there exists an integer $k$ such that $\rho(-I)  = (-1)^{k}I$ and the weights of all the homogeneous elements in $M(\rho)$ are congruent to $k$ modulo two. 
\end{lemma}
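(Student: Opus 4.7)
The plan is to prove the two assertions in sequence: first that $\rho(-I)$ is a scalar of the form $\pm I$, and then that this scalar pins down the parity of every weight in $M(\rho)$.

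For the first part, I would invoke Schur's lemma. Since $-I$ is central in $\Gamma$ (and in particular commutes with every element of $H$), the matrix $\rho(-I)$ commutes with $\rho(h)$ for every $h\in H$. Because $\rho$ is irreducible, Schur's lemma forces $\rho(-I)=\lambda I$ for some $\lambda\in\mathbb{C}^\times$. The relation $(-I)^2=I$ then gives $\lambda^2=1$, so $\lambda\in\{\pm 1\}$; writing $\lambda=(-1)^k$ for $k\in\{0,1\}$ yields the desired integer.

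For the second part, let $F\in M_m(\rho)$ be a nonzero homogeneous element of weight $m$. A direct computation from the definition \eqref{eqn:slash} with $\gamma=-I=\bigl[\begin{smallmatrix}-1&0\\0&-1\end{smallmatrix}\bigr]$ gives
\begin{equation*}
F|_m(-I)(\tau)=(-1)^{-m}F(\tau)=(-1)^m F(\tau).
\end{equation*}
On the other hand, the transformation law \eqref{eqn: transformation} combined with the first part gives $F|_m(-I)=\rho(-I)F=(-1)^k F$. Comparing the two identities yields $((-1)^m-(-1)^k)F=0$, and since $F\neq 0$ (so at least one component function is nonzero, using \cref{irred} if desired to assert each component is actually nonzero), we conclude $(-1)^m=(-1)^k$, i.e.\ $m\equiv k\pmod 2$.

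There is no real obstacle here: the argument is a direct combination of Schur's lemma with the slash-action computation for the central element $-I$. The only thing one needs to be careful about is the sign convention in \eqref{eqn:slash}, but since $(-1)^{-m}=(-1)^m$ this comes out cleanly regardless of which way one reads it.
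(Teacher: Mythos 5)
Your proposal is correct and follows essentially the same route as the paper: the only cosmetic difference is that the paper proves the scalar claim by hand (decomposing the representation space into the $\pm1$-eigenspaces of $\rho(-I)$, which are $H$-submodules by centrality, and invoking irreducibility) rather than citing Schur's lemma outright, and its weight-parity step is the identical slash computation $B|_{j}(-I)=(-1)^{j}B=\rho(-I)B=(-1)^{k}B$. No gaps to report.
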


\begin{proof}  Let $W$ denote the right $H$-module that furnishes $\rho.$ As $\rho(-I)^{2} = 1$, the eigenvalues of $\rho(-I)$ are $1$ and $-1.$ Let $W^{1}$ denote the $+1$-eigenspace and $W^{-1}$ denote the $-1$-eigenspace. We note that $\rho(-I)$ is in the center of $\textrm{Im }\rho$ since $-I$ is in the center of $H.$ Hence $W^{1}$ and $W^{-1}$ are right $H$-submodules of $W.$  As $\rho$ is irreducible, every right $H$-submodule of $W$ is either $W$ or $0.$ Hence there exists some $k \in \Z$ such that $W^{(-1)^{k}} = W$ and $W^{(-1)^{k+1}} = 0.$ Thus $\rho(-I) = (-1)^{k} I.$ Finally, if $B$ is a nonzero vector-valued modular form of weight $j$ then $(-1)^{j}B = B|_{j}-I = \rho(-I)B = (-1)^{k} B$ and thus $j$ and $k$ have the same parity. 
\end{proof}

\nin We now give the proof of  \cref{thm: basis}. 

\begin{proof} Let $F_1, F_2$ denote a basis of vector-valued modular forms for $M(\rho)$. 
The crux of our proof is to show that the weights of $F_1$ and $F_2$ are not equal.
We proceed by contradiction and suppose that the weights of $F_1$ and $F_2$ are equal.  Then there exist $a,b,c,d \in \C$ such that 
$D_{k_0}F_1 = aGF_1 + bGF_2$ and $D_{k_0} F_2 = cGF_1 + dGF_2.$ 
We rewrite this pair of equations as follows: $$ \left[ \begin{array}{c}
D_{k_{0}}F_1\\
D_{k_{0}}F_2\\
\end{array}\right] =  \begin{bmatrix} a & b\\   c & d \\ \end{bmatrix} G \left[ \begin{array}{c}
F_1\\
F_2\\
\end{array}\right].  $$
If $P$ is a $2 \times 2$ matrix then we note that $$P \left(G  \left[ \begin{array}{c}
F_1\\
F_2\\ \end{array}\right] \right)   = G \left(P  \left[ \begin{array}{c}
F_1\\
F_2\\ \end{array}\right] \right)$$
Thus if $P$ is a $2 \times 2$ invertible matrix then we have that \begin{align*} P \left[ \begin{array}{c}
D_{k_{0}}F_1\\
D_{k_{0}}F_2\\ \end{array} \right] & =  P \begin{bmatrix} a & b\\   c & d \\ \end{bmatrix} G \left[ \begin{array}{c}
F_1\\
F_2\\ \end{array}\right] \\ 
& =  P \begin{bmatrix} a & b\\   c & d \\ \end{bmatrix} P^{-1} \left(P \left( G \left[ \begin{array}{c}
F_1\\
F_2\\ \end{array} \right] \right) \right) \\
& =  P \begin{bmatrix} a & b\\   c & d \\ \end{bmatrix} P^{-1} \left(G \left( P \left[ \begin{array}{c}
F_1\\
F_2\\ \end{array} \right] \right) \right). \end{align*}
We may put the matrix $\begin{bmatrix} a & b\\   c & d \\ \end{bmatrix}$ in Jordan canonical form and we now choose $P$ so that  $P\begin{bmatrix} a & b\\   c & d \\  \end{bmatrix} P^{-1} = \begin{bmatrix} * & * \\   0 & \lambda \\ \end{bmatrix}$ for some $\lambda \in \C$.
 We define the functions $A_1$ and $A_2$ by $$ \left[ \begin{array}{c} A_1 \\ A_2 \\
\end{array}\right] = P \left[ \begin{array}{c}F_1\\ F_2\\ \end{array}\right].$$
As $P$ is invertible, the functions $A_1$ and $A_2$ are a basis for $M(\rho).$ We now have that \begin{align*} \left[ \begin{array}{c} D_{k_0} A_1 \\ D_{k_0} A_2 \\
\end{array}\right]  & = P \left[ \begin{array}{c}D_{k_0} F_1\\ D_{k_0} F_2\\ \end{array}\right]\\  & =  P \begin{bmatrix} a & b\\   c & d \\ \end{bmatrix} P^{-1} \left( G \left( P \left[ \begin{array}{c}
F_1\\
F_2\\ \end{array} \right ] \right) \right) \\ 
& = \begin{bmatrix} * & * \\   0 & \lambda \\ \end{bmatrix} G \left[ \begin{array}{c}
A_1\\
A_2\\ \end{array} \right] \\
 &  =   \left[ \begin{array}{c}
* \\ \lambda G A_2 \\ \end{array} \right].   \end{align*}
\nin Thus $D_{k_0} A_2 = \lambda G A_2$. Therefore the two component functions of the vector-valued function $A_2$ satisfy an ordinary differential equation of order one and must be linearly dependent.  As $A_2$ is part of a basis for $M(\rho)$, $A_2 \neq 0.$ Lemma \ref{irred} states that the component functions of any nonzero vector-valued modular form with respect to an irreducible representation are linearly independent. We have thus shown that the components of $A_2$ are both linearly dependent and independent, a contradiction. We conclude that the weights of $F_1$ and $F_2$ are not equal. \\

\nin We recall that $F \in M_{k_0}(\rho)$ such that $F \neq 0$. 
We have shown that the weights of a $M(\Gamma_{0}(2))$-basis for $M(\rho)$ cannot be equal and hence that 
$M_{k_0}(\rho) = \C F$. We may therefore take $F$ to be an element of a basis for $M(\rho)$. Let $B$ denote a homogeneous element in $M(\rho)$ such that $F$ and $B$ form a basis for $M(\rho)$. 
We claim that the weight of $B$, which we denote by $w$, is equal to $k_0 +2$. It follows from Lemma \ref{parity} that $M_{k_0+1}(\rho) = 0.$ Thus $w \geq k_0 + 2.$
If $w >k_0 +2$ then $D_{k_0} F =  mF$ for some $m \in M_{2}(\Gamma_{0}(2))$. 
Thus if $w > k_0 + 2$ then $\frac{1}{2 \pi i} \frac{dF}{d \tau} - \frac{k_0}{12}E_2 F = mF$ and the two component functions of $F$ would satisfy this ordinary differential equation of order one and therefore be linearly dependent. This would contradict Lemma \ref{irred} as $F \neq 0$ and $\rho$ is irreducible. Thus the weight of $B$ is $k_0 + 2$. 
We then have that $D_{k_0}F = \alpha F + \gamma B$ where $\alpha \in M_{2}(\Gamma_{0}(2))$ and $\gamma \in M_{0}(\Gamma_0(2)) = \C$.  If $\gamma = 0$ then $D_{k_0} F = \alpha F$ and so the two component functions of $F$ are linearly dependent, which would contradict Lemma \ref{irred}. Hence $\gamma \neq 0$. 
Thus $B \in \textrm{span}_{M(\Gamma_0(2))}(F, D_{k_0}F)$. As $M(\rho)$ is spanned by $F$ and $B$, it is also spanned by $F$ and $D_{k_0}F$. Finally, as $M(\rho)$ is a free module of rank two over $M(\Gamma_0(2))$, an integral domain, and $F$ and $D_{k_0}F$ span $M(\rho)$, we conclude that $F$ and $D_{k_0}F$ form a basis for $M(\rho)$. \end{proof}

\section{Hypergeometric Series} \label{hypergeometric}

\nin From this point forwards, $\rho$ will denote a complex \textbf{irreducible} representation of $\Gamma_{0}(2)$ of dimension two and $k_0$ will denote the least integer for which $M_{k_0}(\rho) \neq 0$.  We let $F$ denote a nonzero element in  $M_{k_{0}}(\rho).$ We proved in  \cref{thm: basis} that $F$ and $D_{k_0}F$ form a basis for $M(\rho)$ as a $M(\Gamma_{0}(2))$-module. In particular, $M_{k_0}(\rho) = \C F$. Hence $F$ is determined by $\rho$ up to multiplication by a nonzero complex number. In this section, we use  \cref{thm: basis} to compute an ordinary differential equation that $F$ satisfies. We will then solve this differential equation explicitly using the Dedekind $\eta$-function, the Gaussian hypergeometric series $_{2}F_{1}$,  and a Hauptmodul of $\Gamma_{0}(2)$. \\

\nin  As $F \in D_{k_0}(\rho)$, $D_{k_0+2}(D_{k_0}(F)) \in M_{k_0 + 4}(\rho).$
Thus \cref{thm: basis} implies that $D_{k_0 +2}(D_{k_0} F) = C_1 (D_{k_0}F) + C_2 F$ where $C_1 \in M_{2}(\Gamma_{0}(2)) = \C G$ and $C_2 \in M_{4}(\Gamma_{0}(2)) = \C G^2 \bigoplus \C E_4$. 
 Let $a,b,c \in \C$ be the complex numbers such that $C_1 = -aG$ and $C_2 = -(bG^{2} + cE_4)$. Hence \begin{equation} \label{eqn:MLDE} D_{k_0 +2}(D_{k_0}F) + aG D_{k_{0}}F + (bG^2  + cE_4)F = 0. \end{equation} 

\nin  We make use of the Dedekind $\eta$-function to solve the differential equation (\ref{eqn:MLDE}). The function $\eta^{2}$ is holomorphic in $\mathfrak{H}$ and it does not vanish in $\mathfrak{H}$. Let $\omega$ denote the character of $\Gamma$ for which $\eta^{2}|_{1} g  = \omega(g) \eta^2.$ 
Let $F_0 := \frac{F}{\eta^{2k_{0}}}$.  We observe that for all $g \in \Gamma_{0}(2)$, $$F_0|_{0} g = (\eta^{-2k_0}|_{k_0} g)(F|_{k_0} g) = \omega^{-k_0}(g) \eta^{-2k_0} \rho(g) F = 
(\rho \otimes \omega^{-k_0})(g) \eta^{-2k_0} F = (\rho \otimes \omega^{-k_0})(g) F_{0}.$$
\nin This observation together with the fact that $\eta^{2}$ is a holomorphic non-vanishing function of $\mathfrak{H}$ implies that  $F_0$ is a meromorphic vector-valued modular form of weight zero with respect to the representation $\rho_{0} := \rho \otimes \omega^{-k_0}.$ We note that $F_{0}$ is holomorphic in $\mathfrak{H}$ and the only possible poles of $F_{0}$ occur at the cusps. 
We now compute a differential equation that $F_0$ satisfies. We sometimes use the notation $D_{k}^{2}$ to denote $D_{k+2} \circ D_{k}.$ We recall that $\theta := D_{0} = q \frac{d}{dq} = \frac{1}{2 \pi i} \frac{d}{d \tau}.$

\begin{lemma}   Let $F_0 := \frac{F}{\eta^{2k_{0}}}$.  Then \begin{equation} \label{eqn:MLDE0} D_{0}^{2} (F_{0})+ aGD_{0}(F_{0}) + (bG^2 + cE_4)F_{0} = 0. \end{equation} 

\end{lemma}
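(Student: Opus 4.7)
The plan is to pull the factor $\eta^{2k_0}$ through the modular derivative using a Leibniz rule, noting that $\eta^{2k_0}$ is itself annihilated by the appropriately weighted modular derivative. This will let me transform the order two equation (\ref{eqn:MLDE}) for $F$ into the asserted equation for $F_0$.

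The first step is to establish the Leibniz rule: if $f$ has weight $k$ and $g$ has weight $\ell$, then $D_{k+\ell}(fg) = D_k(f) g + f D_\ell(g)$. This follows immediately from the definition $D_k = \theta - \tfrac{k}{12}E_2$ together with the ordinary Leibniz rule for $\theta = q \tfrac{d}{dq}$; the $E_2$ terms combine correctly because the weights add. The second step is the key computation $D_{k_0}(\eta^{2k_0}) = 0$. Using the standard identity $\theta \eta = \tfrac{1}{24}E_2 \eta$ (equivalently $\theta \log \eta = E_2/24$), one finds $\theta(\eta^{2k_0}) = \tfrac{k_0}{12} E_2 \eta^{2k_0}$, and the $E_2$ term is exactly cancelled by the $-\tfrac{k_0}{12}E_2$ in the definition of $D_{k_0}$.

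With these two facts in hand, the proof is mechanical. Since $F = \eta^{2k_0} F_0$ with $\eta^{2k_0}$ of weight $k_0$ and $F_0$ of weight $0$, the Leibniz rule yields
\begin{equation*}
D_{k_0}F = D_{k_0}(\eta^{2k_0}) \cdot F_0 + \eta^{2k_0} \cdot D_0(F_0) = \eta^{2k_0} D_0(F_0).
\end{equation*}
Applying the Leibniz rule a second time to the product $\eta^{2k_0} \cdot D_0(F_0)$ (of weights $k_0$ and $2$), and again using $D_{k_0}(\eta^{2k_0}) = 0$, gives
\begin{equation*}
D_{k_0+2}(D_{k_0}F) = \eta^{2k_0} D_2(D_0 F_0) = \eta^{2k_0} D_0^2(F_0).
\end{equation*}

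The final step is to substitute these expressions into (\ref{eqn:MLDE}), obtaining $\eta^{2k_0}\bigl(D_0^2 F_0 + aG\, D_0 F_0 + (bG^2 + cE_4)F_0\bigr) = 0$. Since $\eta^{2k_0}$ is nowhere vanishing on $\mathfrak{H}$, it can be cancelled to yield (\ref{eqn:MLDE0}). I do not anticipate any real obstacle here: the only subtle point is keeping track of weights to apply Leibniz with the correct subscripts on $D$, and recognizing that $\eta^{2k_0}$ is a weight $k_0$ solution of $D_{k_0}(-) = 0$, which is precisely why conjugating by $\eta^{2k_0}$ preserves the shape of the differential equation while shifting its weight down to zero.
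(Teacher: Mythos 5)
Your proof is correct and follows essentially the same route as the paper: both arguments rest on the Leibniz rule $D_{t+l}(\alpha\beta) = \alpha D_t(\beta) + \beta D_l(\alpha)$ together with the identity $D_{k_0}(\eta^{2k_0}) = 0$ (coming from $\theta\eta = \tfrac{1}{24}E_2\eta$), which give $D_{k_0}F = \eta^{2k_0}D_0(F_0)$ and $D_{k_0+2}(D_{k_0}F) = \eta^{2k_0}D_0^2(F_0)$, after which one cancels the nowhere-vanishing factor $\eta^{2k_0}$. No gaps to report.
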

\begin{proof} Let $f$ denote a function on $\mathfrak{H}$, let $k \in \Z,$ and let $g = \frac{f}{\eta^{2k}}$. To prove the lemma, we observe that because $\eta$ never vanishes in $\mathfrak{H}$, it suffices to show that$$\eta^{2k}(D_0^2(g) + aGD_{0}(g) + (bG^2 + cE_4)g) = D_k^2(f) + aGD_{k}(f) + (bG^2 + cE_4)f .$$ To show that the equation above holds, it suffices to prove that $D_k(f) = D_{0}(\frac{f}{\eta^{2k}}) \eta^{2k}$ and that $D_k^{2}(f) =  \eta^{2k} D_{0}^{2}(\frac{f}{\eta^{2k}}).$ 
We recall that $\theta(\eta) = \frac{1}{2 \pi i} \eta^{'} = \frac{1}{24} E_2 \eta$ (Section 5.8 in \cite{cohen}). Thus $\theta(\eta^{2k}) = 2k \eta^{2k-1}  \theta(\eta) = \frac{k E_2 \eta^{2k}}{12}.$ Hence $D_{k}(\eta^{2k}) = \theta(\eta^{2k}) - \frac{k}{12}E_2 \eta^{2k} = 0.$ If $t, l \in \Z$ and if $\alpha, \beta$ are holomorphic functions 
then $D_{t+l}(\alpha \beta) = \alpha D_{t} \beta + \beta D_{l} \alpha.$
\nin We now have that
 \begin{align*} D_k(f) & = D_{k} \left(\eta^{2k} \cdot \frac{f}{\eta^{2k}} \right) \\
					  & = \frac{f}{\eta^{2k}}D_{k}(\eta^{2k}) + \eta^{2k} \theta \left(\frac{f}{\eta^{2k}}\right) \\
					  & = \eta^{2k} D_{0} \left(\frac{f}{\eta^{2k}} \right). \end{align*} 
\nin Thus \begin{align*} D_k^{2}(f) & = D_{k+2}(D_{k}(f)) \\
									& = D_{k+2}\left(\eta^{2k} \theta \left(\frac{f}{\eta^{2k}} \right) \right) \\
									& = \theta \left(\frac{f}{\eta^{2k}} \right) D_{k}(\eta^{2k}) + \eta^{2k} D_{2} \left(\theta \left(\frac{f}{\eta^{2k}}\right)\right) \\
									& = \eta^{2k} D_{2} \left(\theta \left(\frac{f}{\eta^{2k}} \right) \right) \\
									& = \eta^{2k} D_{0}^{2} \left(\frac{f}{\eta^{2k}}\right).
									\end{align*}
								
\end{proof} 

\nin We now explain how to make a change of variables to solve the differential equation (\ref{eqn:MLDE0}). This technique can be found in a paper of Kaneko and Zagier \cite{kanekozagier}. The modular curve 
\newline \nin  $\Gamma_{0}(2) \backslash (\mathfrak{H} \bigcup \mathbb{P}^{1}(\Q))$ is a compact Riemann surface of genus zero.  In a fundamental region, its cusps are $0$ and $\infty$ and its elliptic point is $\frac{1 + i}{2}$. We shall make use of the function $\mathfrak{J} := \frac{3G^{2}}{E_4 - G^2}$ to solve the differential equation (\ref{eqn:MLDE0}). The function $\mathfrak{J}$ is a modular function and it only has one pole, which is a simple pole at $\infty.$  Hence $\mathfrak{J}$ induces a complex-analytic isomorphism  from $\Gamma_{0}(2) \backslash (\mathfrak{H} \bigcup \mathbb{P}^{1}(\Q))$ to $\mathbb{P}^{1}(\C).$ The function $\mathfrak{J}$ is a therefore a Hauptmodul. \\

\nin We will solve the differential equation $(4)$ that $F_0$ satisfies by writing $F_0$ locally as a function of $\mathfrak{J}$. The function $\mathfrak{J}$ enjoys the property that $\mathfrak{J}(\tau_1) = \mathfrak{J}(\tau_2)$ if and only if $\Gamma_{0}(2) \cdot \tau_1 = \Gamma_{0}(2) \cdot \tau_{2}$. This fact implies that if $\tau \in \mathfrak{H}$ such that $\tau$ is not an elliptic point of $\Gamma_{0}(2)$ then there exists a connected and  simply connected open set $U_{\tau}$ containing $\tau$ for which $U_{\tau}  \subset \mathfrak{H}$  and the restriction of $\mathfrak{J}$ to $U_{\tau}$ is injective. In particular, $U_{\tau}$ does not contain an elliptic point. Consequently, if $\tau \in \mathfrak{H}$ which is not an elliptic point then there exists a unique function $H$ such that $F_{0}|_{U_{\tau}} = H \circ \mathfrak{J}.$ We note that $H$ is holomorphic since $\mathfrak{J}|_{U_{\tau}}$ is biholomorphic and $F_0$ is  holomorphic.  The next theorem computes the differential equation that $H$ satisfies. 

\begin{thm} \label{diff} Let $\tau_{0} \in \mathfrak{H}$ such that $\tau_{0}$ is not an elliptic point of $\Gamma_{0}(2)$.  Let $U_{\tau_{0}}$ denote a connected and  simply connected open set  containing $\tau_{0}$ for which $U_{\tau_{0}}  \subset \mathfrak{H}$ 
and the restriction of $\mathfrak{J}$ to $U_{\tau_{0}}$ is injective.  Let $F_{0} = \frac{F}{\eta^{2k_{0}}}$ and let $H$ denote the function for which $F_{0}|_{U_{\tau_{0}}} = H \circ \mathfrak{J}.$ We have that for all $\tau \in U_{\tau_{0}}$, 
\begin{equation} \label{eqn:ODE}
H^{''}(\mathfrak{J}(\tau)) + \frac{7\mathfrak{J}(\tau) - 6a\mathfrak{J}(\tau) - 3}{6\mathfrak{J}(\tau)(\mathfrak{J}(\tau) -1)} H^{'}(\mathfrak{J}(\tau)) + \frac{(b+c)\mathfrak{J}(\tau) +3c}{\mathfrak{J}(\tau)(\mathfrak{J}(\tau)-1)^{2}} H(\mathfrak{J}(\tau)) = 0  \end{equation} \end{thm}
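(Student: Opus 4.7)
The plan is to view $F_0$ locally as a function of $\mathfrak{J}$ and to convert the modular linear differential equation (\ref{eqn:MLDE0}) into a Fuchsian ODE in the variable $\mathfrak{J}$ by applying the chain rule. Since $D_0 F_0 = \theta F_0$ and $D_0^2 F_0 = D_2(D_0 F_0) = \theta^2 F_0 - \tfrac{1}{6}E_2 \theta F_0$, writing $F_0|_{U_{\tau_0}} = H \circ \mathfrak{J}$ gives $\theta F_0 = H'(\mathfrak{J})\,\theta \mathfrak{J}$ and $\theta^2 F_0 = H''(\mathfrak{J})(\theta \mathfrak{J})^2 + H'(\mathfrak{J})\,\theta^2 \mathfrak{J}$. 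Substituting these into (\ref{eqn:MLDE0}) and grouping terms yields
\[
H''(\mathfrak{J})(\theta \mathfrak{J})^2 + H'(\mathfrak{J})\bigl[D_0^2 \mathfrak{J} + aG\,\theta \mathfrak{J}\bigr] + H(\mathfrak{J})\bigl[bG^2 + cE_4\bigr] = 0,
\]
and dividing by $(\theta \mathfrak{J})^2$ produces an ODE whose coefficients are weight-zero modular functions on $\Gamma_0(2)$, hence rational functions of the Hauptmodul $\mathfrak{J}$.

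The core computation is to identify these rational functions explicitly. First I would prove the key identity $\theta \mathfrak{J} = G(1 - \mathfrak{J})$. Both sides are weight-two meromorphic modular forms on $\Gamma_0(2)$ with at worst a simple pole at $\infty$ and no poles in $\mathfrak{H}$ or at the cusp $0$; since $M_2(\Gamma_0(2)) = \C G$, their difference is a constant multiple of $G$, and it suffices to match the leading polar terms at $\infty$ (using $\mathfrak{J} = \tfrac{1}{64q} + O(1)$ and $G = 1 + 24q + \cdots$) and to verify the finite part by evaluating at the elliptic point $\tau_0 = \tfrac{1+i}{2}$ where $\mathfrak{J}(\tau_0)=1$ forces both sides to vanish. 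Once this identity is in hand, $(\theta \mathfrak{J})^2 = G^2(\mathfrak{J}-1)^2$, and combining with the defining relation $\mathfrak{J}(E_4 - G^2) = 3G^2$ from (\ref{eqn:J}) I obtain
\[
\frac{G^2}{(\theta \mathfrak{J})^2} = \frac{1}{(\mathfrak{J}-1)^2}, \qquad \frac{E_4}{(\theta \mathfrak{J})^2} = \frac{\mathfrak{J}+3}{\mathfrak{J}(\mathfrak{J}-1)^2}, \qquad \frac{aG\,\theta \mathfrak{J}}{(\theta \mathfrak{J})^2} = \frac{-a}{\mathfrak{J}-1}.
\]
These immediately give the $H(\mathfrak{J})$-coefficient $\bigl((b+c)\mathfrak{J} + 3c\bigr)/\bigl[\mathfrak{J}(\mathfrak{J}-1)^2\bigr]$ and account for the $a$-dependent contribution to the $H'(\mathfrak{J})$-coefficient.

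It remains to compute $D_0^2 \mathfrak{J}/(\theta \mathfrak{J})^2$. Applying $\theta$ to $\theta \mathfrak{J} = G(1-\mathfrak{J})$ and using Leibniz I get $\theta^2 \mathfrak{J} = \theta G \cdot (1-\mathfrak{J}) - G\,\theta \mathfrak{J}$, so
\[
D_0^2 \mathfrak{J} = \theta^2 \mathfrak{J} - \tfrac{E_2}{6}\theta \mathfrak{J} = (1-\mathfrak{J})\bigl(\theta G - \tfrac{E_2}{6}G\bigr) - G\,\theta \mathfrak{J} = (1-\mathfrak{J})\bigl(D_2 G - G^2\bigr).
\]
To express $D_2 G \in M_4(\Gamma_0(2)) = \C G^2 \oplus \C E_4$, I would first identify $E_4(2\tau)$ in this basis by matching the first two Fourier coefficients, obtaining $E_4(2\tau) = \tfrac{1}{4}(5G^2 - E_4)$. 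Combining this with the Ramanujan identity $\theta E_2 = (E_2^2 - E_4)/12$ and the chain-rule observation $\theta(E_2(2\tau)) = 2(\theta E_2)(2\tau)$, a direct calculation gives $D_2 G = (E_4 - 2G^2)/6$, hence $D_2 G - G^2 = (E_4 - 8G^2)/6$. Substituting and using $E_4/G^2 = (\mathfrak{J}+3)/\mathfrak{J}$ yields
\[
\frac{D_0^2 \mathfrak{J}}{(\theta \mathfrak{J})^2} = \frac{E_4 - 8G^2}{6G^2(1-\mathfrak{J})} = \frac{7\mathfrak{J}-3}{6\mathfrak{J}(\mathfrak{J}-1)},
\]
which, combined with $-a/(\mathfrak{J}-1) = -6a\mathfrak{J}/\bigl[6\mathfrak{J}(\mathfrak{J}-1)\bigr]$, produces the coefficient of $H'(\mathfrak{J})$ stated in (\ref{eqn:ODE}).

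The main obstacle is the pair of identities $\theta \mathfrak{J} = G(1-\mathfrak{J})$ and $D_2 G = (E_4 - 2G^2)/6$; the non-modular $E_2$ must cancel cleanly, which is what makes the computation slightly delicate. The finite-dimensionality of $M_2(\Gamma_0(2))$ and $M_4(\Gamma_0(2))$ is what keeps the verification elementary, reducing each step to matching a small number of Fourier coefficients on $q$-expansions on $\Gamma_0(2)$.
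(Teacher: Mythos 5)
Your overall route is essentially the paper's: expand $D_{0}F_{0}$ and $D_{0}^{2}F_{0}$ by the chain rule, reduce everything to the identity $\theta\mathfrak{J} = G(1-\mathfrak{J})$, a formula for the second derivative (your $D_{0}^{2}\mathfrak{J} = (1-\mathfrak{J})(D_{2}G - G^{2})$ with $D_{2}G = \tfrac{1}{6}(E_{4}-2G^{2})$ is equivalent to Proposition \ref{appendix2}), and the relation $E_{4}/G^{2} = (\mathfrak{J}+3)/\mathfrak{J}$, then divide by $(\theta\mathfrak{J})^{2} = G^{2}(\mathfrak{J}-1)^{2}$. All of your rational-function coefficients come out exactly as in (\ref{eqn:ODE}), and your value of $D_{2}G$ is correct.

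There is, however, a genuine error in your verification of the key identity $\theta\mathfrak{J} = G(1-\mathfrak{J})$. At the elliptic point $\tfrac{1+i}{2}$ one has $G\bigl(\tfrac{1+i}{2}\bigr) = 0$, hence $\mathfrak{J}\bigl(\tfrac{1+i}{2}\bigr) = \tfrac{3G^{2}}{E_{4}-G^{2}} = 0$, not $1$; the value $\mathfrak{J}=1$ is attained at the cusp $0$ (where $G|_{2}S = -\tfrac12 + O(q_{2})$ and $E_{4} = 1 + \cdots$ force $\mathfrak{J}|_{0}S \to 1$), and $\mathfrak{J}$ never equals $1$ on $\mathfrak{H}$. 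Consequently, after you match the $q^{-1}$ terms and conclude that $\theta\mathfrak{J} - G(1-\mathfrak{J})$ lies in $M_{2}(\Gamma_{0}(2)) = \C G$, evaluating at $\tfrac{1+i}{2}$ gives no information: both sides vanish there only because $G$ does (and because $\mathfrak{J}$ has a critical point at the order-two elliptic point), and the unknown constant multiple of $G$ vanishes there as well, so the constant is not determined by this evaluation. The fix is immediate: compare the $q^{0}$ coefficients of the two sides at $\infty$ (this is in effect what the paper does, via $(E_{4}-G^{2})\theta\mathfrak{J} = E_{4}G - 4G^{3}$ in $M_{6}(\Gamma_{0}(2))$ together with a Riemann--Roch argument showing $\tfrac{E_{4}-4G^{2}}{E_{4}-G^{2}} = 1-\mathfrak{J}$), or evaluate at the cusp $0$, where $G$ is nonzero while both $\theta\mathfrak{J}$ and $G(1-\mathfrak{J})$ vanish. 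You should also add a word on why division by $(\theta\mathfrak{J})^{2}$ is legitimate on $U_{\tau_{0}}$: injectivity of $\mathfrak{J}$ there gives $\mathfrak{J}' \neq 0$ (equivalently $U_{\tau_{0}}$ contains no elliptic points, so $G \neq 0$, and $\mathfrak{J} \neq 1$ on $\mathfrak{H}$); this is the point the paper handles with the valence-formula argument for the zeros of $G^{2}$.
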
 

\nin The proof of Theorem \ref{diff} uses the following propositions whose proofs are appear in the appendix.
\begin{proposition}\label{appendix1} $$\theta(\mathfrak{J}) = (1 - \mathfrak{J})G.$$  
\end{proposition}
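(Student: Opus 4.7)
The plan is a direct calculation that reduces the claim to a polynomial identity among $E_4$, $E_6$, and $G$. First, I would rewrite $1-\mathfrak{J} = (E_4 - 4G^2)/(E_4 - G^2)$, so the asserted identity
$\theta(\mathfrak{J}) = (1-\mathfrak{J})G$ becomes
$$\theta\!\left(\frac{3G^2}{E_4 - G^2}\right) = \frac{(E_4 - 4G^2)G}{E_4 - G^2}.$$
Applying the quotient rule to the left-hand side, expanding, and canceling the common factor $G/(E_4-G^2)$ (which is nonzero since $E_4 \neq G^2$), this is equivalent to the polynomial identity
$$3\bigl(2E_4\,\theta G - G\,\theta E_4\bigr) = (E_4 - 4G^2)(E_4 - G^2).$$

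Next I would compute the two quasi-derivatives. Ramanujan's identities give $\theta E_4 = (E_2 E_4 - E_6)/3$ and $\theta E_2 = (E_2^2 - E_4)/12$. Combining the latter with $G = -E_2 + 2E_2(2\tau)$ and the elementary fact that $\theta(E_2(2\tau)) = 2(\theta E_2)(2\tau)$, one expresses $\theta G$ in terms of $E_2$, $E_4$, $E_2(2\tau)$, and $E_4(2\tau)$. To eliminate $E_4(2\tau)$ in favor of $\Gamma_0(2)$-modular forms, I would use that $\{E_4, G^2\}$ is a basis of the two-dimensional space $M_4(\Gamma_0(2))$; comparing the first two $q$-coefficients yields $4E_4(2\tau) = 5G^2 - E_4$. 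Substituting back, $\theta G$ collapses to $(E_4 - 2G^2 + E_2 G)/6$ (i.e. $D_2 G = (E_4 - 2G^2)/6$), and when inserted into $2E_4\,\theta G - G\,\theta E_4$ the quasi-modular $E_2$-contributions cancel, leaving
$$2E_4\,\theta G - G\,\theta E_4 = \frac{E_4^2 - 2E_4 G^2 + G E_6}{3}.$$

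Putting this into the reduced identity, the proposition becomes equivalent to $E_4^2 - 2E_4 G^2 + GE_6 = E_4^2 - 5E_4 G^2 + 4G^4$, i.e.\ $E_6 = 4G^3 - 3GE_4$. This is an identity in $M_6(\Gamma_0(2)) = \C G^3 \oplus \C G E_4$, a two-dimensional space, so comparing the constant term and the coefficient of $q$ in the $q$-expansions suffices. The main obstacle is bookkeeping in the middle step: verifying that all contributions from the non-modular $E_2$ and $E_2(2\tau)$ cancel, which hinges on the two coincidences $4E_4(2\tau) = 5G^2 - E_4$ and $E_6 = 4G^3 - 3GE_4$. No conceptually new input is required beyond the Ramanujan derivative formulas and these two finite coefficient comparisons.
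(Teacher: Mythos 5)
Your proof is correct, but it follows a genuinely different route from the paper. I checked the key computations: the quotient rule indeed reduces the claim to $3\bigl(2E_4\,\theta G - G\,\theta E_4\bigr) = (E_4 - 4G^2)(E_4 - G^2)$; your formula $\theta G = \tfrac{1}{6}(E_4 - 2G^2 + E_2 G)$ is right (and agrees with the proposition stated in the paper's appendix), as are the two coefficient identifications $4E_4(2\tau) = 5G^2 - E_4$ in $M_4(\Gamma_0(2))$ and $E_6 = 4G^3 - 3G E_4$ in $M_6(\Gamma_0(2))$, each legitimate because those spaces are two-dimensional with bases $\{E_4, G^2\}$ and $\{G^3, GE_4\}$. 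The paper argues structurally instead: since $\mathfrak{J}$ is a modular function, $\theta(\mathfrak{J})$ is a meromorphic modular form of weight two whose only pole is a simple pole at $\infty$, so multiplying by the cusp form $E_4 - G^2$ lands in $M_6(\Gamma_0(2)) = \C E_4 G \oplus \C G^3$, and a single $q$-expansion comparison gives $(E_4 - G^2)\theta(\mathfrak{J}) = E_4 G - 4G^3$; the remaining identity $\tfrac{E_4 - 4G^2}{E_4 - G^2} = 1 - \mathfrak{J}$ the paper obtains via a Riemann--Roch argument, although, as your first step shows, it is just algebra from the definition of $\mathfrak{J}$. The trade-off: the paper's route is shorter and avoids all quasimodular bookkeeping with $E_2$ and $E_2(2\tau)$, but it leans on the facts that $\theta$ sends weight-zero meromorphic forms to weight-two meromorphic forms and preserves order of vanishing at $\infty$; your route needs only Ramanujan's derivative formulas and finite coefficient comparisons in two-dimensional spaces, so it is more elementary and self-contained, at the cost of heavier computation. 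One presentational caution: when you ``cancel the common factor $G/(E_4 - G^2)$,'' say explicitly that $G$ and $E_4 - G^2$ are nonzero holomorphic modular forms, so the cancellation is valid in the field of meromorphic functions and the resulting identity extends to all of $\mathfrak{H}$ by continuity.
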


\begin{proposition} \label{appendix2} $$\theta^{2}(\mathfrak{J}) =G^2(1 - \mathfrak{J})(\frac{3 - 7 \mathfrak{J}}{6\mathfrak{J}})  + \frac{1}{6}E_2 \theta(\mathfrak{J}).$$
\end{proposition}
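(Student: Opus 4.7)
The plan is to differentiate Proposition \ref{appendix1} and reduce the claim to a single scalar identity for the modular derivative $D_2(G)$. Applying $\theta$ to Proposition \ref{appendix1} and using the Leibniz rule gives
$$\theta^2(\mathfrak{J}) \;=\; -\theta(\mathfrak{J})\,G + (1-\mathfrak{J})\,\theta(G) \;=\; (1-\mathfrak{J})\bigl(\theta(G) - G^2\bigr),$$
where the second equality uses Proposition \ref{appendix1} a second time. On the right-hand side of the target identity, I apply Proposition \ref{appendix1} once more to rewrite $\tfrac{1}{6}E_2\,\theta(\mathfrak{J}) = \tfrac{1}{6}E_2(1-\mathfrak{J})G$. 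Since $1-\mathfrak{J}$ is not identically zero, after cancelling the common factor $(1-\mathfrak{J})$ and moving $E_2 G/6$ to the left, the proposition is equivalent to
$$D_2(G) \;=\; G^2 \;+\; G^2\,\frac{3-7\mathfrak{J}}{6\mathfrak{J}} \;=\; G^2\,\frac{3-\mathfrak{J}}{6\mathfrak{J}}.$$

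Next, I clean up the right-hand side using the definition $\mathfrak{J} = 3G^2/(E_4 - G^2)$, which yields $3G^2/\mathfrak{J} = E_4 - G^2$, and hence
$$G^2\,\frac{3-\mathfrak{J}}{6\mathfrak{J}} \;=\; \frac{E_4 - 2G^2}{6}.$$
Thus the entire proposition reduces to the purely scalar identity
$$D_2(G) \;=\; \tfrac{1}{6}\,E_4 \,-\, \tfrac{1}{3}\,G^2.$$

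Finally, I prove this last identity. Both sides are holomorphic weight-$4$ modular forms on $\Gamma_0(2)$, and $M_4(\Gamma_0(2)) = \C G^2 \oplus \C E_4$ is two-dimensional, so it suffices to match the first two Fourier coefficients using the given expansions of $E_2$, $E_4$, and $G = -E_2 + 2E_2(2\tau)$. As an alternative, one can compute $\theta(G)$ directly by applying Ramanujan's identity $\theta(E_2) = (E_2^2 - E_4)/12$ at both $\tau$ and $2\tau$ (noting $\theta(E_2(2\tau)) = 2(\theta E_2)(2\tau)$) and then re-expressing $E_2(2\tau)^2$ and $E_4(2\tau)$ in the basis $\{G^2, E_4\}$, but this is no simpler. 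The genuine content of the proposition lives entirely in this final scalar identity; the reductions in the first two paragraphs are formal. I expect the only real obstacle to be the routine low-order Fourier coefficient arithmetic that pins down the coefficients $-\tfrac{1}{3}$ and $\tfrac{1}{6}$, e.g.\ by solving the $2 \times 2$ linear system produced by comparing two Fourier coefficients on each side.
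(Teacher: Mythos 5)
Your proposal is correct and follows essentially the same route as the paper: both apply the Leibniz rule to $\theta\bigl((1-\mathfrak{J})G\bigr)$, reduce everything to the single weight-$4$ identity $D_2(G)=\tfrac{1}{6}E_4-\tfrac{1}{3}G^2$ (equivalently $\theta(G)=\tfrac{1}{6}(E_2G+E_4-2G^2)$), and prove that identity by noting $D_2(G)\in M_4(\Gamma_0(2))=\C E_4\oplus\C G^2$ and comparing two Fourier coefficients, together with the relation $E_4/G^2=(\mathfrak{J}+3)/\mathfrak{J}$. Incidentally, your sign $+\tfrac{1}{6}E_4$ is the correct one; the paper's displayed conclusion $D_2(G)=-\tfrac{1}{6}E_4-\tfrac{1}{3}G^2$ in its auxiliary proposition contains a sign typo, as its own statement $\theta(G)=\tfrac{1}{6}(E_2G+E_4-2G^2)$ confirms.
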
 
\nin We will also need to use the fact that $\frac{E_4}{G^2} = \frac{\mathfrak{J} + 3}{\mathfrak{J}}$ in our proof of Theorem \ref{diff}. This equality is immediate from the fact that $\mathfrak{J}: = \frac{3G^2}{E_4 - G^2}.$
We now proceed with the proof of Theorem \ref{diff}. 
\begin{proof} (\textbf{Proof of Theorem \ref{diff}}) We have that $D_0 F_{0} = \theta F_0$ and $D_0^{2} F_0 = (\theta - \frac{1}{6} E_2)(\theta F_0) = \theta^{2} F_{0} - \frac{1}{6} E_2 \theta F_0.$
Therefore $$D_0^{2}F_0 + aGD_0 F_0 + (bG^2 + cE_4)F_0 = \theta^{2} F_{0} + (aG - \frac{1}{6} E_{2}) \theta F_{0} + (bG^2 + cE_4)F_0 = 0.$$
\nin We have that 
\begin{align*}  \theta(F_{0})(\tau) &  =  \theta(H \circ \mathfrak{J})(\tau) \\ 
& = \frac{1}{2 \pi i} (H \circ \mathfrak{J})^{'}(\tau) \\
& =   H^{'}(\mathfrak{J}(\tau)) \cdot  \frac{1}{2 \pi i} \mathfrak{J}^{'}(\tau) \\
& =   H^{'}(\mathfrak{J}(\tau)) \cdot (\theta \mathfrak{J})(\tau) \\
& =  H^{'}(\mathfrak{J}(\tau)) \cdot G(\tau)(1 - \mathfrak{J}(\tau)).   \end{align*} 
Therefore 
 \begin{align*} & \theta^{2}(F_0)(\tau) \\  
 &=  \theta((H^{'} \circ \mathfrak{J}) \cdot \theta \mathfrak{J})(\tau) \\
& = \theta(H^{'} \circ \mathfrak{J})(\tau) \cdot (\theta \mathfrak{J})(\tau) + (H^{'} \circ \mathfrak{J})(\tau) \cdot (\theta^{2} \mathfrak{J})(\tau) \\
& = H^{''}(\mathfrak{J}(\tau)) \cdot \theta( \mathfrak{J})(\tau) \cdot  \theta(\mathfrak{J})(\tau)   + (H^{'} \circ \mathfrak{J})(\tau) \cdot (\theta^{2} \mathfrak{J})(\tau) \\
& =  H^{''}(\mathfrak{J}(\tau)) \cdot (1 - \mathfrak{J}(\tau))^{2} G^{2}(\tau)  \\
& \qquad + H^{'}( \mathfrak{J}(\tau)) \cdot G^2(\tau)(1 - \mathfrak{J}(\tau))\Bigg(\frac{3 -7\mathfrak{J}(\tau)}{6\mathfrak{J}(\tau)}\Bigg)  \\
& \quad \qquad + \frac{1}{6}E_2(\tau) H^{'}(\mathfrak{J(\tau)}) \theta(\mathfrak{J})(\tau)  \\
&  = G^{2}(\tau)\Bigg(H^{''}(\mathfrak{J}(\tau)) (1 - \mathfrak{J}(\tau))^{2} + (H^{'} (\mathfrak{J}(\tau))) \cdot (1 - \mathfrak{J}(\tau)) \cdot \frac{3 - 7\mathfrak{J}(\tau)}{6 \mathfrak{J}(\tau)}\Bigg) \\
& \quad \qquad + \frac{1}{6} E_2(\tau) \theta(\mathfrak{J})(\tau) H^{'}(\mathfrak{J}(\tau)). \end{align*}  
Thus \begin{align*} & D_0^{2}F_0(\tau) + aG(\tau)D_0 F_0(\tau) + \Big(bG^2(\tau) + cE_4(\tau)\Big)F_0(\tau) \\
&  = \theta^{2} F_{0}(\tau) + \Big(aG(\tau) - \frac{1}{6} E_{2}(\tau)\Big) \theta F_{0}(\tau) + \Big(bG^2(\tau) + cE_4(\tau)\Big)F_0(\tau) \\
& =  G^{2}(\tau)\Bigg(H^{''}(\mathfrak{J}(\tau)) (1 - \mathfrak{J}(\tau))^{2} + H^{'} (\mathfrak{J}(\tau)) \cdot (1 - \mathfrak{J}(\tau))\Big(\frac{3 - 7 \mathfrak{J}(\tau)}{6 \mathfrak{J}(\tau)}\Big)\Bigg) \\ 
&  \quad + \frac{1}{6} E_2(\tau) \theta(\mathfrak{J})(\tau) H^{'}(\mathfrak{J}(\tau)) \\
& \qquad + \Big(aG(\tau) - \frac{1}{6}E_2(\tau)\Big) H^{'}(\mathfrak{J}(\tau)) \cdot G(\tau)(1 - \mathfrak{J}(\tau)) \\
& \qquad \qquad + G^2(\tau) \left(b + c \frac{E_4(\tau)}{G^2(\tau)}\right) H(\mathfrak{J}(\tau))  \\
& = G^{2}(\tau)\Bigg(H^{''}(\mathfrak{J}(\tau)) (1 - \mathfrak{J}(\tau))^{2} + H^{'} (\mathfrak{J}(\tau)) \cdot (1 - \mathfrak{J}(\tau))\Big(\frac{3- 7\mathfrak{J}(\tau)}{6 \mathfrak{J}(\tau)}\Big)\Bigg)  \\
& \qquad + aG(\tau)H^{'}(\mathfrak{J}(\tau)) \cdot G(\tau)(1 - \mathfrak{J}(\tau)) \\
& \qquad \qquad + G^2(\tau)\Bigg(b + c \frac{E_4(\tau)}{G^2(\tau)}\Bigg) H(\mathfrak{J}(\tau)) \\
& = G^2(\tau)H^{''}(\mathfrak{J}(\tau))(1 - \mathfrak{J}(\tau))^{2} \\
& \qquad + G^{2}(\tau)H^{'}(\mathfrak{J}(\tau)) ((1 - \mathfrak{J}(\tau))\Bigg(a + \frac{3 - 7\mathfrak{J}(\tau)}{6\mathfrak{J}(\tau)}\Bigg) \\
& \qquad \qquad + G^2(\tau) \Bigg(b + c \frac{E_4(\tau)}{G^2(\tau)}\Bigg)H(\mathfrak{J}(\tau))\\
& =  G^2(\tau)H^{''}(\mathfrak{J}(\tau))(1 - \mathfrak{J}(\tau))^{2} \\
& \qquad + G^{2}(\tau)H^{'}(\mathfrak{J}(\tau))(1 - \mathfrak{J}(\tau))\left(a  + \frac{3 - 7\mathfrak{J}(\tau)}{6\mathfrak{J}(\tau)}\right) \\
& \qquad \qquad + G^2(\tau) \Bigg(b + c \frac{\mathfrak{J(\tau)} +3}{\mathfrak{J}(\tau)}\Bigg) H(\mathfrak{J}(\tau)). \end{align*}
 \nin We have shown that 
\begin{align*} 0 & = G^{2}(\tau)\Bigg(H^{''}(\mathfrak{J}(\tau))(1 - \mathfrak{J}(\tau))^{2}  + H^{'}(\mathfrak{J}(\tau))(1 - \mathfrak{J}(\tau))\left(\frac{(6a - 7)\mathfrak{J}(\tau) + 3}{6\mathfrak{J}(\tau)}\right)  \\
& \quad +  \left(b + c \frac{\mathfrak{J(\tau)} +3}{\mathfrak{J}(\tau)}\right)H(\mathfrak{J}(\tau))\Bigg). \end{align*} 
The fact that $V := \begin{bmatrix} 1 & -1 \\ 2 & - 1  \end{bmatrix}$ fixes $\frac{1 + i}{2}$  implies that $G^{2}\left(\frac{1 +i}{2}\right) = 0.$ The valence formula then implies that $G^{2}(\tau) = 0$ if and only if $\tau \in \Gamma_{0}(2) \cdot \frac{1 + i}{2}$, which is equivalent to the statement that $\tau$ is an elliptic point. 
Thus if $\tau$ is not an elliptic point then
\begin{align*} 0 & = H^{''}(\mathfrak{J}(\tau))(1 - \mathfrak{J}(\tau))^{2}  + H^{'}(\mathfrak{J}(\tau))(1 - \mathfrak{J}(\tau))\left(\frac{(6a - 7)\mathfrak{J}(\tau) + 3}{6\mathfrak{J}(\tau)}\right)  \\
 \qquad \quad & +  \left(\frac{(b+c)\mathfrak{J}(\tau) +3c}{\mathfrak{J}(\tau)}\right)H(\mathfrak{J}(\tau)). \end{align*}
 \nin As $U_{\tau_{0}}$ does not contain any elliptic points, the above equation holds for all $\tau$ in $U_{\tau_{0}}$. Finally, we divide the above equation by $(1 - \mathfrak{J}(\tau))^{2}$ and we obtain the differential equation (\ref{eqn:ODE}).  \end{proof}

\noindent Let $V_{\tau_{0}} := \mathfrak{J}(U_{\tau_{0}})$ and let $Y := \mathfrak{J}(\tau).$
We have shown that $$D_0^{2}F_0(\tau) + aG(\tau)D_0 F_0(\tau) + \Big(bG^2(\tau) + cE_4(\tau)\Big)F_0(\tau) = 0 \textrm{ for all }\tau \in U_{\tau_{0}}$$ if and only if every $Y \in V_{\tau_{0}}$ satisfies the differential equation \newline  
\begin{equation} \label{eqn:ODEmonic}
H^{''}(Y) + \frac{7Y - 6aY- 3}{6Y(Y -1)} H^{'}(Y) + \frac{(b+c)Y+3c}{Y(Y-1)^{2}} H(Y) = 0
\end{equation}
\nin We will now solve the differential equation (\ref{eqn:ODEmonic}) using hypergeometric series. A good reference for hypergeometric series and singularities of ordinary differential equations is Chapter $6$ of \cite{ODE}.
The singularities of the differential equation (\ref{eqn:ODEmonic}) occur at $Y = 0, 1, \infty$ and they are all regular. \\

\nin The method of Frobenius provides a means to find a power series solution to such a differential equation in the neighborhood of a regular singular point. Let $p(Y) = \frac{7Y - 6aY- 3}{6Y(Y -1)}$ and $q(Y) = \frac{(b+c)Y+3c}{Y(Y-1)^{2}}.$  If $y_0$ is a regular singular point of (\ref{eqn:ODEmonic}) then we define $P_{0} = \lim_{Y \rightarrow y_{0}} (Y - y_0)p(Y)$ and 
$Q_0  = \lim_{Y \rightarrow y_0} (Y - y_0)^{2} q(Y).$ One may write  
$H(Y) = (Y - y_0)^{v}\left(1 + \sum_{n=1}^{\infty} c_n(Y- y_0)^{n}\right)$ in a neighborhood of $y_0$ and the number $v$ satisfies the indicial equation, $v(v-1) + P_0 v + Q_0  = 0.$ The roots of this equation are called the indicial roots at $y_0.$ The indicial equation at $0$ is $Y(Y-1) + \frac{1}{2} Y = 0$ and its roots are $0$ and $\frac{1}{2}.$ The indicial equation at $1$ is $Y(Y-1) + \frac{2 -3a}{3}Y + b + 4c = 0.$ \\

\nin The differential equation (\ref{eqn:ODEmonic}) is Fuchsian since all of its singularities are regular.  In fact, this differential equation is a Riemann differential equation since it is a second order differential equation with exactly three singularities, all of which are regular. We follow the standard technique to solve a Riemann differential equation of order two and define the function $W(Y)$ via the equation $H(Y) = Y^{\lambda} (Y-1)^{r} W(Y)$ where $\lambda$ is an indicial root of (\ref{eqn:ODEmonic}) at $0$ and $r$ is an indicial root of (\ref{eqn:ODEmonic}) at $1$. We make the choice of setting $\lambda = 0$.  We also recall that the indicial roots at $1$ are the roots of the quadratic equation: $$r(r -1) + \left(\frac{2 - 3a}{3}\right)r + (b + 4c) = 0.$$
We have that $ H(Y) = (Y-1)^{r} W(Y).$
\nin Thus  \begin{align*} 0 &= H^{''}(Y) + \frac{7Y - 6aY- 3}{6Y(Y -1)} H^{'}(Y) + \frac{(b+c)Y+3c}{Y(Y-1)^{2}} H(Y) \\
& = r(r-1)(Y-1)^{r-2}W(Y) + 2r(Y-1)^{r-1}W^{'}(Y)  + (Y-1)^{r}W^{''}(Y) \\ 
& \quad +  \frac{7Y - 6aY- 3}{6Y(Y -1)} \left(r(Y-1)^{r-1}W(Y) + (Y-1)^{r}W^{'}(Y)\right) 
\\ & \qquad + \frac{(b+c)Y+3c}{Y(Y-1)^{2}} (Y-1)^{r} W(Y)  \\
 & = (Y-1)^{r} W^{''}(Y) + W^{'}(Y)\left(2r(Y-1)^{r-1} + \frac{7Y - 6aY- 3}{6Y(Y -1)} (Y-1)^{r} \right) \\ 
 &\quad +  W(Y)\left((Y-1)^{r} \frac{(b+c)Y + 3c}{Y(Y-1)^{2}} + r(Y-1)^{r-1} \frac{7Y - 6aY- 3}{6Y(Y -1)} + r(r-1)(Y-1)^{r-2}\right). \end{align*} 
  
\nin In the computation below, we will use the fact that $r$ satisfies the equation $r(r -1) + (\frac{2 - 3a}{3})r + (b + 4c) = 0$  to get that $6b + 6c + 7r -6ar + 6r(r-1) = 6b + 6c + 7r -6ar -6(b+ 4c) - 2r(2 - 3a) = -18c + 3r.$  We have that \begin{align*} 0 & = Y(Y-1)W^{''}(Y) + W^{'}(Y)\left(2rY + \frac{7Y - 6aY - 3}{6}\right) \\
 & \qquad +  W(Y)\left(\frac{(b+c)Y + 3c}{Y-1} + \frac{r(7Y - 6aY - 3)}{6(Y-1)} + \frac{r(r-1)Y}{Y-1}\right)  \\
 & = Y(Y-1)W^{''}(Y) + W^{'}(Y)\left(Y\left(2r + \frac{7 - 6a}{6}\right) - \frac{1}{2}\right) \\ & \qquad +   W(Y)\left(\frac{(Y(6b + 6c + 7r -6ar + 6r(r-1)) + 18c - 3r }{6(Y-1)}\right)  \\
&= Y(Y-1)W^{''}(Y) + W^{'}(Y)\left(Y\left(2r + \frac{7 - 6a}{6}\right) - \frac{1}{2}\right) \\ & \qquad + W(Y)\left(\frac{Y(-18c + 3r) + 18c - 3r}{6(Y-1)}\right) \\
& = Y(Y-1)W^{''}(Y) + W^{'}(Y)\left(Y\left(2r + \frac{7 - 6a}{6}\right) - \frac{1}{2}\right) + W(Y)\left(\frac{r - 6c}{2}\right). \end{align*}

\nin We conclude that 
\begin{equation}
\label{eqn:2F1} 
 0 = Y(Y-1)W^{''}(Y) + W^{'}(Y)\left(Y(2r + \frac{7 - 6a}{6}) - \frac{1}{2}\right) + W(Y)\left(\frac{r - 6c}{2}\right).\
\end{equation}
The differential equation (\ref{eqn:2F1}) is an example of a differential equation in Gauss normal form: 
\begin{equation}
\label{eqn:GNF}
Y(Y-1)W^{''}(Y) + W^{'}(Y)((A + B + 1)Y - C) + AB W(Y) = 0
\end{equation}
If $A - B \not \in \Z$ then a basis for the space of solutions to the differential equation (\ref{eqn:GNF})
in a neighborhood of $\infty$ (see Section $12$ in \cite{ODE}) is $$Y^{-A}\;_{2}F_{1}(A, 1+ A - C, 1 + A - B; Y^{-1}), \textrm{and }Y^{-B}\;_{2}F_{1}(B, 1 + B - C, 1 + B - A;Y^{-1})$$ where $_{2}F_{1}$ denotes the 
Gaussian hypergeometric function defined in equation (\ref{eqn:Gauss2F1}). In our case, 
\begin{equation}
\label{eq:ABC}
A + B + 1 = 2r + \frac{7 - 6a}{6}, \; AB = \frac{r - 6c}{2}, \; C = \frac{1}{2}.
\end{equation} We note that $A$ and $B$ are the roots of the quadratic polynomial $x^{2} - x(2r + \frac{1 - 6a}{6}) + \frac{r -6c}{2}.$
 We recall that $H(Y) = (Y-1)^{r}W(Y)$ and conclude that if $A - B \not \in \Z$ then \newline 
 \nin $(Y-1)^{r}Y^{-A}\;_{2}F_{1}(A, 1+ A - C, 1 + A - B; Y^{-1})$ and  $(Y-1)^{r}Y^{-B}\;_{2}F_{1}(B, 1 + B - C, 1 + B - A;Y^{-1})$ form a basis for the space of solutions to the differential equation (\ref{eqn:ODEmonic}). 
 As $Y = \mathfrak{J}(\tau)$, we have that if $A - B \not \in \Z$ then a basis for the space of solutions in a neighborhood of $\infty$ to the differential equation $$D_0^{2}f + aGD_0f + (bG^2 + cE_4)f = 0$$ is
  $$(\mathfrak{J}(\tau)-1)^{r}\mathfrak{J}(\tau)^{-A} \;_{2}F_{1}(A, 1+ A - C, 1 + A - B; \mathfrak{J}(\tau)^{-1})$$ and
  $$(\mathfrak{J}(\tau)-1)^{r}\mathfrak{J}(\tau)^{-B}\;_{2}F_{1}(B, 1 + B - C, 1 + B - A; \mathfrak{J}(\tau)^{-1}).$$ Finally, we have that if $A - B \not \in \Z$ then a basis for the space of solutions in a neighborhood of $\infty$ to the differential equation $$D_{k_0}^{2}f + aGD_{k_0}f + (bG^2+ cE_4)f = 0$$ is  $\eta^{2k_0}(\tau)(\mathfrak{J}(\tau)-1)^{r}\mathfrak{J}(\tau)^{-A}\;_{2}F_{1}(A, 1+ A - C, 1 + A - B; \mathfrak{J}(\tau)^{-1})$ and 
  \newline \nin $\eta^{2k_0}(\tau)(\mathfrak{J}(\tau)-1)^{r}\mathfrak{J}(\tau)^{-B}\;_{2}F_{1}(B, 1 + B - C, 1 + B - A; \mathfrak{J}(\tau)^{-1}).$ We have thus found a basis of solutions to the differential equation (\ref{eqn:MLDE}) that the component functions of $F$ satisfy.  \\  
  
 \nin  We recall that we have made the stipulation that $\rho(T)$ is diagonalizable in order to avoid logarithmic vector-valued modular forms. This condition on $\rho$ will now play 
an essential role. We claim that the eigenvalues of $\rho(T)$ are distinct. To see this, we use the fact that $\Gamma_{0}(2)$ is generated by $T$ and $V := \begin{bmatrix} 1 & -1 \\ 2 & - 1 \end{bmatrix}.$ We argue by contradiction and suppose that the eigenvalues of $\rho(T)$ are not distinct. Then $\rho(T)$ is a scalar matrix. If $v$ is any nonzero eigenvector for $\rho(V)$ then $\C v$ is a $\Gamma_{0}(2)$-invariant subspace, which contradicts the irreducibility of $\rho.$ Thus the eigenvalues of $\rho(T)$ are distinct. \\

\nin We now use the fact that the eigenvalues of $\rho(T)$ are distinct to show that $A - B \not \in \Z$.  Let $m_1, m_2 \in \C$ such that $|m_1| \leq |m_2|$ and such that the eigenvalues of 
$\rho(T)$ are $e^{2 \pi i m_{1}}$ and $e^{2 \pi i m_{2}}.$ The fact that the eigenvalues of $\rho(T)$ are distinct implies that $m_1 - m_2 \not \in \Z.$
Let $X \in \textrm{GL}_{2}(\C)$ such that 
 $$X \rho(T) X^{-1}  =\left[ \begin{matrix}  e^{2 \pi i m_1} & 0 \\
 0 & e^{2 \pi i m_2} \end{matrix}\right].$$ We recall that $\chi$ denotes the character associated to the modular form $\eta^{2}$ and $\rho_{0} = \rho \otimes \chi^{-k_0}.$
As $\chi(T) = e^{\frac{2 \pi i}{6}},$ $$X \rho_{0}(T) X^{-1}  =\left[ \begin{matrix}  e^{2 \pi i (m_1- \frac{k_0}{6})} & 0 \\
 0 & e^{2 \pi i (m_2-\frac{k_0}{6})} \end{matrix}\right].$$
 We observe that the eigenvalues of $\rho_{0}(T)$ are distinct since the eigenvalues of $\rho(T)$ are distinct. The function $XF_{0}$ is a 
 vector-valued modular form with respect to $X\rho_{0}X^{-1}$ since $F_0$ is a vector-valued modular form with respect to $\rho_{0}$.  Let $h_1$ denote the first and $h_2$ denote the second component function of $XF_{0}$. 
 We have that  $$ \left[ \begin{matrix}  e^{2 \pi i (m_1- \frac{k_0}{6})} & 0 \\
 0 & e^{2 \pi i (m_2-\frac{k_0}{6})} \end{matrix}\right] \left[\begin{matrix} h_1(\tau) \\ h_2(\tau) \end{matrix} \right] = \left[\begin{matrix} h_1(\tau+1) \\ h_2(\tau+1) \end{matrix} \right] .$$
 Let $\widehat{h_{1}}(\tau) := h_1(\tau)e^{-2 \pi i (m_1 - \frac{k_0}{6}) \tau}$ and let $\widehat{h_{2}}(\tau) := h_2(\tau)e^{-2 \pi i (m_2 - \frac{k_0}{6}) \tau}.$ Then $\widehat{h_{1}}(\tau + 1) = \widehat{h_{1}}(\tau)$
 and $\widehat{h_{2}}(\tau + 1) = \widehat{h_{2}}(\tau)$. Therefore there exist sequences $\{a_n\}_{n=-\infty}^{n = \infty},$ and $\{b_n\}_{n=-\infty}^{\infty}$ such that 
$\widehat{h_1}(\tau) = \sum_{n \in \Z} a_n q^{n}$ and $\widehat{h_2}(\tau) = \sum_{n \in \Z} b_n q^{n}$. Thus $h_{1}(\tau) = q^{(m_1 - \frac{k_0}{6})} \sum_{n \in \Z} a_n q^{n}$ and $h_{2}(\tau) = q^{(m_2 - \frac{k_0}{6})} \sum_{n \in \Z} b_n q^{n}.$ 
 As $F$ is holomorphic at $\infty$, $F_0$ is meromorphic at $\infty$ and therefore $a_n  = 0$ if $n < < 0$ and $b_n = 0$ if $n < < 0$. Let $l_1$ and $l_2$ denote the unique complex numbers such that there exist sequences of complex numbers $(c_{n})_{n \geq 0}$ and $(d_{n})_{n \geq 0}$ with $c_0 \neq 0$, $d_0 \neq 0$, and such that 
 $h_{1}(\tau) = q^{l_1} \sum_{n=0}^{\infty} c_{n}q^{n}$ and $h_{2}(\tau) = q^{l_{2}} \sum_{n=0}^{\infty} d_{n}q^{n}.$ We note that $l_1 - (m_1 - \frac{k_{0}}{6}) \in \Z$ and $l_2 - (m_2 - \frac{k_0}{6}) \in \Z$ since $q^{l_1} = q^{m_1  - \frac{k_0}{6}}$ and $q^{l_2} = q^{m_2 - \frac{k_0}{6}}.$
 Hence $l_1 - l_2 \not \in \Z$ since $m_1 - m_2 \not \in \Z.$ \\

\nin The component functions $h_1$ and $h_2$ of $XF_{0}$ are solutions to the differential equation 
 $$D_{2}(D_{0} g) + aG D_{0}g + (bG^{2} + cE_4)g = 0$$ since the component functions of $F_0$ are solutions to this differential equation.
 We note that $h_1$ and $h_2$ cannot be linearly dependent because $l_1 - l_2 \not \in \Z$.  Thus the functions $h_1$ and $h_2$ form a basis for the space of solutions to the above differential equation. If we substitute $h_1 = q^{l_1} \sum_{n=0}^{\infty} c_{n}q^{n}$ into this differential equation then we get that $0 = D_{2}(D_{0} h_1) + aG D_{0} h_1 + (bG^2 + cE_4)h_1 = q^{l_1}(l_1^{2} + (a- \frac{1}{6}) l_1 + b + c + O(q)).$  Hence 
 $l_1^{2} + (a- \frac{1}{6}) l_1 + b + c = 0.$ Similarly, $l_2^{2} + (a - \frac{1}{6})l_2 + b + c = 0.$ Thus $l_1 l_ 2 = b + c$ and $l_1 + l_2 = \frac{1}{6} - a.$ \\
  
  \nin We now relate the numbers $A$ and $B$ to $l_1$ and $l_2$ in order to establish that $A - B \not \in \Z.$  We recall that $A$ and $B$ are the roots of the polynomial $x^{2} - x(2r + \frac{1 - 6a}{6}) + \frac{r - 6c}{2}.$ Let $D$ denote the discriminant of this polynomial. We recall that $l_1 l_ 2 = b + c$ and $l_1 + l_2 = \frac{1}{6} - a$. 
We have that 
 \begin{align*} D & = \left(2r + \frac{1 -6a}{6}\right)^{2}  - 2(r- 6c)  & \\
 & = 4r^2 + \frac{1}{36} + a^{2} - 4ar - \frac{a}{3} - \frac{4r}{3} + 12c  \\
 & = 4\left(r(a + \frac{1}{3}) - (b + 4c)\right)  + \frac{1}{36} + a^{2} - 4ar - \frac{a}{3} - \frac{4r}{3} + 12c  \\
 & = -4(b+c) + (a - \frac{1}{6})^{2} \\
 & = -4 l_1 l_2 + (-l_1 - l_2)^{2} \\
 & = (l_1 - l_{2})^{2}. \end{align*} 
 We use the quadratic formula to see that $A$ and $B$ are the numbers $r + \frac{1}{2}(\frac{1}{6} - a \pm \sqrt{D}) = r + \frac{1}{2}(l_{1} + l_{2} \pm (l_1 - l_2)).$ Thus $\{A, B\} = \{r +  l_1,
 r + l_2\}$. We now fix the values of $A$ and $B$ by choosing to set $A = r +  l_1$ and $B =  r + l_{2}.$  
 Thus $A  - B = l_1 - l_2 \equiv m_1 - m_2 \; (\textrm{mod } \Z)$ since 
 $l_1 \equiv m_1 + \frac{k_{0}}{6} \; (\textrm{mod }  \Z)$ and $l_{2} \equiv m_{2} + \frac{k_0}{6} \; (\textrm{mod } \Z).$ Hence $A - B \not \in \Z$ since $m_1 - m_2 \not \in \Z.$ We now proceed with the proof of \cref{thm: explicitbasis}. We state a more technical version of this theorem below than the statement given in the introduction. \\

\nin \textbf{Theorem 1.5.} Let $\rho$ denote an irreducible complex representation of $\Gamma_{0}(2)$ of dimension two such that $\rho(T)$ is diagonalizable. Let $k_0$ denote the least integer for which $M_{k_0}(\rho) \neq 0$ and let $F$ denote a nonzero element in $M_{k_0}(\rho)$. Let $e^{2 \pi i m_1}$ and $e^{2 \pi i m_2}$ denote the eigenvalues of the matrix $\rho(T)$ with $|m_1| \leq |m_2|$. Let $X \in \textrm{GL}_{2}(\C)$ such that 
 $X \rho(T) X^{-1}  =\left[ \begin{matrix}  e^{2 \pi i m_1} & 0 \\
 0 & e^{2 \pi i m_2} \end{matrix}\right]$.
 Let $a,b,$ and  $c$ denote the unique complex numbers such that $$D_{k_0 +2}(D_{k_0} F)+ aG D_{k_0}F + (bG^2 + cE_4)F = 0.$$
  Let $r$ denote a complex number such that $r(r -1) + (\frac{2 - 3a}{3})r + (b + 4c) = 0.$ Let $A$ and $B$ denote the roots of the quadratic polynomial $x^{2} - x(2r + \frac{1 - 6a}{6}) + \frac{r -6c}{2}.$
  Then there exist unique nonzero complex numbers $\kappa_1$ and $\kappa_2$ such that  
  $$F(\tau) = X^{-1} \left[ \begin{matrix} \kappa_1 \eta^{2k_{0}}(\tau) (\mathfrak{J}(\tau)-1)^{r}\mathfrak{J}(\tau)^{-A}\;_{2}F_{1}(A, \frac{1}{2}+ A , 1 + A - B; \mathfrak{J}(\tau)^{-1})   \\  \\
  \kappa_2  \eta^{2k_{0}}(\tau) (\mathfrak{J}(\tau)-1)^{r}\mathfrak{J}(\tau)^{-B}\;_{2}F_{1}(B, \frac{1}{2} + B, 1 + B - A; \mathfrak{J}(\tau)^{-1}) \end{matrix} \right]. $$ 

\nin \begin{remark} We emphasize the matrix $X$ in \cref{thm: explicitbasis} above is different than the matrix $Q$ that appears in the less technical version of \cref{thm: explicitbasis}, which is stated in the introduction. We also note the presence of the constants 
$\kappa_1$ and $\kappa_2$ in the statement of \cref{thm: explicitbasis} above, which do not appear when \cref{thm: explicitbasis} is stated in the introduction. \end{remark}

\begin{proof}  \nin The functions $h_1$ and $h_2$ both have what we call a \textit{pure $q$-expansion}. We say that a function $R$
 has a \textit{pure $q$-expansion} if $R =  q^{\nu}\sum_{n \in \Z} \alpha_{n} q^{n}$ for some complex number $\nu.$ 
If $\alpha_n = 0$ for $n <0$ and $\alpha_0 \neq 0$ then $\nu$ is uniquely determined and we call $\nu$ the \textit{leading exponent} of $R.$
Every solution of the differential equation $D_{2}(D_{0} g) + aG D_{0}g + (bG^{2} + cE_4)g = 0$ is a linear combination of $h_1 = q^{l_1} \sum_{n=0}^{\infty} c_{n}q^{n}$
and $h_2 =  q^{l_{2}} \sum_{n=0}^{\infty} d_{n}q^{n}.$ The fact that $l_1 - l_2 \not \in \Z$ implies that the only solutions to this differential equation which have a pure $q$-expansion are scalar multiples of $h_1$ or scalar multiples of $h_2.$  We have shown that $A - B \not \in \Z$ and thus that $(\mathfrak{J}(\tau)-1)^{r}\mathfrak{J}(\tau)^{-A}\;_{2}F_{1}(A, 1+ A - C, 1 + A - B; \mathfrak{J}(\tau)^{-1})$ and 
$(\mathfrak{J}(\tau)-1)^{r}\mathfrak{J}(\tau)^{-B}\;_{2}F_{1}(B, 1 + B - C, 1 + B - A; \mathfrak{J}(\tau)^{-1})$ form a basis for the space of solutions to the differential equation  $$D_{2}(D_{0} g) + aG D_{0}g + (bG^{2} + cE_4)g = 0.$$ We will show that $(\mathfrak{J}(\tau)-1)^{r}\mathfrak{J}(\tau)^{-A}\;_{2}F_{1}(A, 1+ A - C, 1 + A - B; \mathfrak{J}(\tau)^{-1})$ has a pure $q$-expansion with leading exponent $A - r = l_1$ and  that $(\mathfrak{J}(\tau)-1)^{r}\mathfrak{J}(\tau)^{-B}\;_{2}F_{1}(B, 1 + B - C, 1 + B - A; \mathfrak{J}(\tau)^{-1})$ has  a pure $q$-expansion with leading exponent $B - r = l_2.$ This will imply that there exist
$\kappa_1, \kappa_2 \in \C$ such that $h_1= \kappa_1 (\mathfrak{J}(\tau)-1)^{r}\mathfrak{J}(\tau)^{-A}\;_{2}F_{1}(A, 1+ A - C, 1 + A - B; \mathfrak{J}(\tau)^{-1})$ 
and  $h_2= \kappa_2(\mathfrak{J}(\tau)-1)^{r}\mathfrak{J}(\tau)^{-B}\;_{2}F_{1}(B, 1 + B - C, 1 + B - A; \mathfrak{J}(\tau)^{-1}).$ \\

\nin  We will employ Newton's binomial theorem, which states that if $\alpha \in \C$ and if $|x| < 1$ then $(1 + x)^{\alpha} = \sum_{n=0}^{\infty} \binom{\alpha}{n}x^{n}.$ 
We note that $|q| <1$ because $\tau \in \mathfrak{H}$. This observation will justify our application of Newton's binomial theorem. 
We have that $\mathfrak{J}(q) = \frac{3G^2}{E_4 - G^2} = \frac{1}{64q}(1 + O(q)).$ We now apply Newton's binomial theorem to get that
 for each integer $n$, $\mathfrak{J}(\tau)^{-n} = (64q)^{n}(1 + O(q)).$
Thus  $$_{2}F_{1}(A, 1+ A - C, 1 + A - B; \mathfrak{J}(\tau)^{-1}) = 1 + \sum_{n \geq 1} \frac{(A)_{n}(1 + A - C)_{n}}{(1 + A - B)_{n} n!} \mathfrak{J}(\tau)^{-n} =  1 + O(q)$$ and  
$$_{2}F_{1}(B, 1 + B - C, 1 + B - A; \mathfrak{J}(\tau)^{-1}) = 1 + \sum_{n \geq 1} \frac{(B)_{n}(1 + B - C)_{n}}{(1 + B - A)_{n} n!} \mathfrak{J}(\tau)^{-n} = 1 + O(q).$$ 

\nin We again apply Newton's binomial theorem to get that $$\mathfrak{J}^{-A}(q) = (64q)^{A}(1 + O(q)), \mathfrak{J}^{-B}(q) = (64q)^{B}(1 + O(q)), (\mathfrak{J} - 1)^{r} = (64q)^{-r}(1 + O(q)).$$ It now follows that 
 $$(\mathfrak{J}(\tau)-1)^{r}\mathfrak{J}(\tau)^{-A}\;_{2}F_{1}(A, 1+ A - C, 1 + A - B; \mathfrak{J}(\tau)^{-1}) = (64q)^{A -r}(1 + O(q)) = (64q)^{l_1}(1 + O(q))$$ has a pure $q$-expansion with leading exponent $l_1 = A -r$ and that 
 $$(\mathfrak{J}(\tau)-1)^{r}\mathfrak{J}(\tau)^{-B}\;_{2}F_{1}(B, 1 + B - C, 1 + B - A; \mathfrak{J}(\tau)^{-1}) = (64q)^{B- r}(1 + O(q)) = (64q)^{l_2}(1 + O(q))$$ has a pure $q$-expansion with leading exponent 
$l_2 = B - r.$ Thus there exist unique nonzero complex numbers $\kappa_1$ and $\kappa_2$ such that  
 $$XF_0 = \left[ \begin{matrix} h_1 \\ h_2 \end{matrix} \right] = \left[ \begin{matrix} \kappa_1  (\mathfrak{J}(\tau)-1)^{r}\mathfrak{J}(\tau)^{-A}\;_{2}F_{1}(A, 1+ A - C, 1 + A - B; \mathfrak{J}(\tau)^{-1})   \\  \\  \kappa_2  (\mathfrak{J}(\tau)-1)^{r}\mathfrak{J}(\tau)^{-B}\;_{2}F_{1}(B, 1 + B - C, 1 + B - A; \mathfrak{J}(\tau)^{-1}) \end{matrix} \right].$$ We substitute $C = \frac{1}{2}$ and we get that 
 \begin{align*} F(\tau) & = \eta^{2k_0}(\tau) F_0(\tau) \\
 & = \eta^{2k_0}(\tau) X^{-1}(XF_{0})(\tau) \\ 
 & = X^{-1} \left[ \begin{matrix} \kappa_1 \eta^{2k_{0}}(\tau) (\mathfrak{J}(\tau)-1)^{r}\mathfrak{J}(\tau)^{-A}\;_{2}F_{1}(A, \frac{1}{2}+ A , 1 + A - B; \mathfrak{J}(\tau)^{-1})   \\  \\
  \kappa_2  \eta^{2k_{0}}(\tau) (\mathfrak{J}(\tau)-1)^{r}\mathfrak{J}(\tau)^{-B}\;_{2}F_{1}(B, \frac{1}{2} + B, 1 + B - A; \mathfrak{J}(\tau)^{-1}) \end{matrix} \right]. \end{align*}
\end{proof}

\section{The arithmetic of vector-valued modular forms} \label{arithmetic}
\subsection{The Fourier expansions of the component functions of $F$}

\nin In the previous section, we proved that there exist unique nonzero complex numbers $\kappa_1$ and $\kappa_2$ such that  
  $$F(\tau) = X^{-1} \left[ \begin{matrix} \kappa_1 \eta^{2k_{0}}(\tau) (\mathfrak{J}(\tau)-1)^{r}\mathfrak{J}(\tau)^{-A}\;_{2}F_{1}(A, \frac{1}{2}+ A , 1 + A - B; \mathfrak{J}(\tau)^{-1})   \\  \\
  \kappa_2  \eta^{2k_{0}}(\tau) (\mathfrak{J}(\tau)-1)^{r}\mathfrak{J}(\tau)^{-B}\;_{2}F_{1}(B, \frac{1}{2} + B, 1 + B - A; \mathfrak{J}(\tau)^{-1}) \end{matrix} \right]. $$
 \nin  We wish to study the arithmetic properties of the Fourier coefficients of the component functions of $F.$ These Fourier coefficients need not be algebraic numbers since $\kappa_1$ and $\kappa_2$ may not be algebraic numbers. Instead, we study the $q$-series expansions of the functions:
  $$\eta^{2k_0} (\mathfrak{J}(\tau)-1)^{r}\mathfrak{J}(\tau)^{-A}\;_{2}F_{1}(A, \frac{1}{2}+ A , 1 + A - B; \mathfrak{J}(\tau)^{-1})$$  $$\eta^{2k_{0}}(\mathfrak{J}(\tau)-1)^{r}\mathfrak{J}(\tau)^{-B}\;_{2}F_{1}(B, \frac{1}{2} + B, 1 + B - A; \mathfrak{J}(\tau)^{-1}).$$ We will show that if $\rho$ has certain properties then the $q$-series coefficients of these two functions are algebraic numbers.  
 
\begin{definition}
We let $\{h(K)\}_{K=1}^{\infty}$ and $\{\widetilde{h}(K)\}_{K=1}^{\infty}$ denote the sequences for which 
$$ \mathfrak{J}^{-A} (\mathfrak{J}-1)^{r} \;_{2}F_{1}(A, \frac{1}{2} + A , 1 + A - B; \mathfrak{J}(\tau)^{-1}) = 64^{A -r} q^{A-r}\left(1 + \sum_{K=1}^{\infty} h(K)q^{K}\right)$$ 
and  $$ \mathfrak{J}^{-B} (\mathfrak{J}-1)^{r} \;_{2}F_{1}(B, \frac{1}{2} + B , 1 + B - A; \mathfrak{J}(\tau)^{-1}) = 64^{B -r} q^{B-r}\left(1 + \sum_{K=1}^{\infty} \widetilde{h}(K)q^{K}\right).$$ 
\end{definition} 

\begin{definition}
Let $F' :=  \left[ \begin{matrix}  \eta^{2k_0} q^{A-r}(1 + \sum_{K=1}^{\infty} h(K)q^{K}) \\ \eta^{2k_0} q^{B-r}(1 + \sum_{K=1}^{\infty} \widetilde{h}(K)q^{K}) \end{matrix} \right].$
\end{definition}

\nin The vector-valued function $F'$ may be obtained from $XF$ by normalizing both of the component functions of $XF$ to have their leading Fourier coefficients equal one. 
In fact, $$F' = \left[ \begin{matrix} 64^{r -A} \kappa_1^{-1} & 0 \\ 0 & 64^{r-B} \kappa_2^{-1} \end{matrix} \right] X F.$$ 

\nin We have that $\eta^{2k_0} = q^{\frac{k_{0}}{12}}(1 + O(q))$ since $\eta = q^{\frac{1}{24}}(1 + O(q)).$  We therefore make the following definition: 
\begin{definition} Let $\{d(K) \}_{K=1}^{\infty}$ and  
$\{\widetilde{d}(K) \}_{K=1}^{\infty}$ denote the sequences of numbers for which 
$$F' = \left[ \begin{matrix}  q^{\frac{k_0}{12} + A -r}(1 + \sum_{K=1}^{\infty} d(K)q^{K})  \\  q^{\frac{k_0}{12} + B -r}(1 + \sum_{K=1}^{\infty} \widetilde{d}(K)q^{K}) \end{matrix} \right].$$
\end{definition} 

\nin Thus $$1 + \sum_{K \geq 1} d(K)q^{K} = \eta^{-2k_0} 64^{A-r} \left(1 + \sum_{K \geq 1} h(K)q^{K}\right),$$ 

$$1 + \sum_{K \geq 1} \widetilde{d}(K) q^{K} = \eta^{-2k_0} 64^{B -r}\left(1 + \sum_{K \geq 1} \widetilde{h}(K) q^{K}\right)$$

\begin{definition} Let $E :=  \left[ \begin{matrix} 64^{r -A} \kappa_1^{-1} & 0 \\ 0 & 64^{r-B} \kappa_2^{-1} \end{matrix} \right] X$ and let $\rho' = E \rho E^{-1}.$
\end{definition}
\nin For each $k \in \Z$, the map $Z \mapsto EZ$ gives an isomorphism from $M_{k}(\rho)$ to $M_{k}(\rho')$ and a $M(\Gamma_{0}(2))$-module isomorphism from $M(\rho)$ to $M(\rho').$ Thus $F' \in M_{k_0}(\rho').$ It is convenient to phrase our results in terms of vector-valued modular forms for $\rho'.$ We will show in this section that if $\rho$ has certain properties then for each integer $k$, there is a basis for $M_{k}(\rho')$ whose component functions have the property that all of their Fourier coefficients are algebraic numbers. \\

\nin To effectively study the Fourier coefficients of $F'$, we will give formulas for $h(K)$ and $\widetilde{h}(K)$ in Theorem \ref{sequence}.  In the second part of this section, we will use the formulas in Theorem \ref{sequence} to study the denominators of the Fourier coefficients of the component functions of $F'.$ In particular, we will show that the sequence of the denominators of the Fourier coefficients of each of the component functions of $F'$ is unbounded provided $\rho$ satisfies a certain hypothesis. In the last part of this section, we show that if $\rho$ satisfies this hypothesis then the sequence of the denominators of the Fourier coefficients of the component functions of every vector-valued modular form for $\rho'$ is unbounded provided the Fourier coefficients are algebraic numbers. \\

\nin To give formulas for $h(K)$ and $\widetilde{h}(K)$, it will also be important to use the  Hauptmodul $\mathfrak{K} := 64 \mathfrak{J}$ because  
$\mathfrak{K} \in  \frac{1}{q} \Z[[q]]^{\times}.$ A proof of this fact is given in Lemma \ref{integral} in the appendix.

\nin We will express $h(K)$ and $\widetilde{h}(K)$ in terms of several sequences, which we will now define. Lemma \ref{integral} implies that for each integer $k \geq 0$, $\mathfrak{K}^{-k} \in  q^{k} \Z[[q]]^{\times}.$ We also show in the appendix that $\mathfrak{K} = q^{-1}(1 + O(q)).$ This fact together with Lemma \ref{integral} imply that for each positive integer $t$,  $(q^{-1} \mathfrak{K}^{-1} - 1)^{t} \in q^{t} \Z[[q]].$
\begin{definition}  For each integer $k \geq 0$, let $\{D(s,k)\}_{s=0}^{\infty}$ denote the sequence of integers such that $$\mathfrak{K}^{-k} =  \sum_{s =0}^{\infty} D(s,k)q^{s} =  q^{k} + \sum_{s=k +1}^{\infty} D(s,k) q^{s}.$$ 
\end{definition} 

\begin{definition} For each integer $t > 0$, let $\{C(t,d)\}_{d=0}^{\infty}$ denote the sequence of integers for which $$(q^{-1} \mathfrak{K}^{-1} - 1)^{t} = \sum_{d=0}^{\infty} C(t,d) q^{d} = \sum_{d = t}^{\infty} C(t,d) q^{d}.$$ 
\end{definition}

 \begin{definition} We define $$g(m,n) := \binom{r}{n} \frac{(-1)^{n} 2^{4m + 6n}(2A)_{2m} }{(1 + A - B)_{m} m!},\; \; \widetilde{g}(m,n) := \binom{r}{n} \frac{(-1)^{n} 2^{4m + 6n}(2B)_{2m} }{(1 + B - A)_{m} m!}.$$ 
\end{definition}
\begin{definition} We define
$$f(k) := \sum_{\substack{n,m \geq 0 \\ n + m = k }} g(m,n), \; \; \widetilde{f}(k) := \sum_{\substack{n,m \geq 0 \\ n + m = k }} \widetilde{g}(m,n).$$ 
\end{definition} 

 \begin{thm} \label{sequence}  We have that
\begin{align*} h(K)  & = \sum_{\substack{d, s \geq 0 \\ d+ s = K }} \left(\sum_{t = 0}^{d} C(t,d) \binom{A-r} {t} \right) \left(\sum_{k=0}^{s} f(k) D(s,k)\right) \\
& = f(K) + \sum_{k=0}^{K - 1} f(k)D(s,k) + \sum_{\substack{d, s \geq 0 \\ d+ s = K  \\ s < K}} \left(\sum_{t = 0}^{d} C(t,d) \binom{A-r} {t} \right) \left(\sum_{k=0}^{s} f(k) D(s,k)\right)
\end{align*} 
\nin \textrm{and} 
\begin{align*}  \widetilde{h}(K) & = \sum_{\substack{d, s \geq 0 \\ d+ s = K }} \left(\sum_{t = 0}^{d} C(t,d) \binom{B-r} {t} \right) \left(\sum_{k=0}^{s} \widetilde{f}(k) D(s,k)\right) \\
& = \widetilde{f}(K) + \sum_{k=0}^{K - 1} \widetilde{f}(k)D(s,k) + \sum_{\substack{d, s \geq 0 \\ d+ s = K  \\ s < K}} \left(\sum_{t = 0}^{d} C(t,d) \binom{B-r} {t} \right) \left(\sum_{k=0}^{s} \widetilde{f}(k) D(s,k)\right). 
\end{align*}
\end{thm}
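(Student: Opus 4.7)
The plan is to prove Theorem \ref{sequence} by a direct power-series manipulation, arranged so that each factor on the left-hand side is re-expressed in $\mathfrak{K} = 64\mathfrak{J}$ and $q$ in a way that naturally produces the sequences $f$, $D$, and $C$. Set $Z := \mathfrak{J}^{-1} = 64\mathfrak{K}^{-1}$; using $(\mathfrak{J}-1)^{r} = \mathfrak{J}^{r}(1-\mathfrak{J}^{-1})^{r}$ for a compatible choice of branch, the left-hand side becomes $Z^{A-r}(1-Z)^{r}\,{}_{2}F_{1}(A,\tfrac12+A,1+A-B;Z)$. Writing $\phi := q\mathfrak{K}$, Lemma \ref{integral} gives $\phi \in 1 + q\Z[[q]]^{\times}$, so $Z = 64q\phi^{-1}$ and $\phi^{-1}-1 = q^{-1}\mathfrak{K}^{-1}-1 \in q\Z[[q]]$; this is exactly what legitimises Newton's binomial theorem on the non-integer powers that appear below.

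First I would treat the factor $(1-Z)^{r}\,{}_{2}F_{1}(A,\tfrac12+A,1+A-B;Z)$. The Pochhammer duplication identity $(A)_{m}(A+\tfrac12)_{m} = (2A)_{2m}/4^{m}$ rewrites the hypergeometric series as $\sum_{m \geq 0}\frac{(2A)_{2m}}{4^{m}(1+A-B)_{m}\,m!}Z^{m}$. Expanding $(1-Z)^{r} = \sum_{n \geq 0}\binom{r}{n}(-1)^{n}Z^{n}$ by Newton, forming the Cauchy product, and substituting $Z^{k} = 64^{k}\mathfrak{K}^{-k}$, the numerical constant $64^{k}/4^{m} = 2^{4m+6n}$ (for $k=m+n$) collapses exactly into the definition of $g(m,n)$, so the coefficient of $\mathfrak{K}^{-k}$ equals $\sum_{n+m=k}g(m,n) = f(k)$. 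Substituting $\mathfrak{K}^{-k} = \sum_{s \geq 0}D(s,k)q^{s}$ and regrouping by powers of $q$, using $D(s,k)=0$ for $s<k$, yields
\begin{equation*}
(1-Z)^{r}\,{}_{2}F_{1}(A,\tfrac12+A,1+A-B;Z) = \sum_{s \geq 0}\Bigl(\sum_{k=0}^{s}f(k)D(s,k)\Bigr)q^{s}.
\end{equation*}

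Next I would treat $Z^{A-r}$ in the same style. Factoring $Z^{A-r} = 64^{A-r}q^{A-r}\phi^{r-A}$, Newton applied to $\phi^{r-A} = \bigl(1+(q^{-1}\mathfrak{K}^{-1}-1)\bigr)^{A-r}$ gives $\sum_{t \geq 0}\binom{A-r}{t}(q^{-1}\mathfrak{K}^{-1}-1)^{t}$, and substituting $(q^{-1}\mathfrak{K}^{-1}-1)^{t} = \sum_{d \geq 0}C(t,d)q^{d}$ produces
\begin{equation*}
Z^{A-r} = 64^{A-r}q^{A-r}\sum_{d \geq 0}\Bigl(\sum_{t=0}^{d}C(t,d)\binom{A-r}{t}\Bigr)q^{d}.
\end{equation*}

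Multiplying the two $q$-expansions just obtained, cancelling $64^{A-r}$, and reading off the coefficient of $q^{A-r+K}$ produces the first (Cauchy-product) formula for $h(K)$; a sanity check at $K=0$ using $C(0,0)=D(0,0)=f(0)=1$ confirms the normalisation by $1$. The alternative expression for $h(K)$ is then obtained by peeling off the $d=0$ summand (whose outer factor equals $1$), which contributes $\sum_{k=0}^{K}f(k)D(K,k) = f(K) + \sum_{k=0}^{K-1}f(k)D(K,k)$ after using $D(K,K)=1$. The formula for $\widetilde{h}(K)$ is proved by the identical argument with $A$ and $B$ swapped, replacing $g, f$ by $\widetilde{g}, \widetilde{f}$ throughout. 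There is no real conceptual obstacle: the whole argument is careful bookkeeping in Cauchy products, and the only point that needs genuine justification is the applicability of Newton's binomial theorem to $\phi^{r-A}$ and $(1-Z)^{r}$, which is precisely what Lemma \ref{integral} provides.
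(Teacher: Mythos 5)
Your proposal is correct and follows essentially the same route as the paper's proof: the Pochhammer duplication identity and Newton's binomial theorem produce the coefficients $f(k)$ of $\mathfrak{K}^{-k}$, the substitution $\mathfrak{K}^{-k}=\sum_{s}D(s,k)q^{s}$ and the expansion of $\mathfrak{K}^{r-A}=q^{A-r}(1+X)^{A-r}$ via the $C(t,d)$ give the two $q$-series, and the final Cauchy product yields $h(K)$, with $\widetilde{h}(K)$ handled symmetrically. The only differences are notational (working with $Z=\mathfrak{J}^{-1}$ and $\phi=q\mathfrak{K}$ rather than directly with $\mathfrak{J}$ and $\mathfrak{K}$), so there is nothing further to add.
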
 

\begin{proof}
 \nin The proof of the above formula for $\widetilde{h}(K)$ is completely 
analogous to the proof of the formula for $h(K).$ We therefore just give the proof of the formula for $h(K).$ We have that 
 $$_{2}F_{1}(A, \frac{1}{2} + A, 1 + A - B; \mathfrak{J}^{-1}) = 1 + \sum_{m=1}^{\infty} \frac{(A)_{m} (\frac{1}{2} + A)_{m}}{(1 + A - B)_{m} m!} \mathfrak{J}^{-m} =1 + \sum_{m=1}^{\infty} \frac{2^{6m} (A)_{m} (\frac{1}{2} + A)_{m}}{(1 + A - B)_{m} m!} \mathfrak{K}^{-m} $$
 \nin We note that $(A)_{m}(A + \frac{1}{2})_{m} = \left(2^{-m} \prod_{j=0}^{m-1} (2A + 2j)\right)\left( 2^{-m} \prod_{j=0}^{m-1} (2A + 1 + 2j)\right)= 2^{-2m} (2A)_{2m}.$
 Therefore $$_{2}F_{1}(A, \frac{1}{2} + A, 1 + A - B; \mathfrak{J}^{-1}) = 1 +  \sum_{m=1}^{\infty} \frac{2^{4m} (2A)_{2m}}{(1 + A - B)_{m} m!} \mathfrak{K}^{-m}.$$ 
 Similarly, $$_{2}F_{1}(B, \frac{1}{2} + B, 1 + B - A; \mathfrak{J}(\tau)^{-1}) = 1 + \sum_{m=1}^{\infty} \frac{2^{4m} (2B)_{2m}}{(1 + B - A)_{m}m!} \mathfrak{K}^{-m}.$$
 \nin As $\mathfrak{J} = \frac{1}{64q}(1 + O(q))$, $\mathfrak{J}^{-1} = 64q(1 + O(q)).$ We may therefore apply Newton's binomial theorem and we have that: 
 \begin{align*} \mathfrak{J}^{-A} (\mathfrak{J} - 1)^{r} & = \mathfrak{J}^{-A} \mathfrak{J}^{r} (1 -\mathfrak{J}^{-1})^{r} = \mathfrak{J}^{r - A} (1 - \mathfrak{J}^{-1})^{r} \\
 & = \mathfrak{J}^{r - A} \left(1 + \sum_{n =1}^{\infty}  \binom{r}{n} (-1)^{n} \mathfrak{J}^{-n}\right) \\
 & = 64^{A-r} \mathfrak{K}^{r-A} \left(1 + \sum_{n=1}^{\infty} \binom{r}{n} (-1)^{n} 2^{6n} \mathfrak{K}^{-n} \right). \end{align*}
 Thus \begin{align*} & \mathfrak{J}^{-A} (\mathfrak{J}-1)^{r} \;_{2}F_{1}(A, \frac{1}{2} + A , 1 + A - B; \mathfrak{J}(\tau)^{-1})   \\
 & = 64^{A-r} \mathfrak{K}^{r-A} \left(1 + \sum_{n=1}^{\infty} \binom{r}{n} (-1)^{n} 2^{6n} \mathfrak{K}^{-n} \right) \left(1 +  \sum_{m=1}^{\infty} \frac{2^{4m} (2A)_{2m}}{(1 + A - B)_{m} m!} \mathfrak{K}^{-m} \right) \\
 & = 64^{A-r} \mathfrak{K}^{r-A}  \left(1 + \sum_{k=1}^{\infty} \left(\sum_{\substack{n,m \geq 0 \\ n + m = k}} \binom{r}{n} \frac{(-1)^{n} 2^{4m + 6n}(2A)_{2m}}{(1 + A - B)_{m} m!}\right) \mathfrak{K}^{-k} \right)  \end{align*}
We recall that \begin{align*} g(m,n) := \binom{r}{n} \frac{(-1)^{n} 2^{4m + 6n}(2A)_{2m} }{(1 + A - B)_{m} m!} & = \frac{(-1)^{n}(-r)_{n}}{n!} \cdot \frac{(-1)^{n} 2^{4m + 6n}(2A)_{2m} }{(1 + A - B)_{m} m!} \\
 & =   \frac{2^{4m + 6n}(-r)_{n}(2A)_{2m} }{(1 + A - B)_{m} m! n!}.   \end{align*} 
We also recall that $$f(k) := \sum_{\substack{n,m \geq 0 \\ n + m = k }} g(m,n).$$ We have thus shown that
 $$\mathfrak{J}^{-A} (\mathfrak{J}-1)^{r} \;_{2}F_{1}(A, \frac{1}{2} + A , 1 + A - B; \mathfrak{J}(\tau)^{-1}) = 64^{A-r} \mathfrak{K}^{r -A} \left(1 + \sum_{k=1}^{\infty} f(k) \mathfrak{K}^{-k} \right). $$

   \nin  For each integer $k \geq 0$, we recall that $\{D(s,k)\}_{s=0}^{\infty}$ denotes the sequence of integers such that $$\mathfrak{K}^{-k} =  \sum_{s =0}^{\infty} D(s,k)q^{s} =  q^{k} + \sum_{s=k +1}^{\infty} D(s,k) q^{s}.$$
Therefore \begin{align*} & \mathfrak{J}^{-A} (\mathfrak{J}-1)^{r} \;_{2}F_{1}(A, \frac{1}{2} + A , 1 + A - B; \mathfrak{J}(\tau)^{-1}) \\
 & =  64^{A-r} \mathfrak{K}^{r-A} \left(1 + \sum_{k=1}^{\infty} f(k) \mathfrak{K}^{-k} \right) \\
 & = 64^{A-r} \mathfrak{K}^{r-A}  \left(1 + \sum_{k=1}^{\infty} f(k) \left(q^{k} + \sum_{s = k+1}^{\infty} D(s,k) q^{s}\right) \right)\\
 & =  64^{A-r} \mathfrak{K}^{r-A} \left(1 + \sum_{s=1}^{\infty} q^{s}\left(f(s) + \sum_{k=0}^{s-1} D(s,k) f(k)\right)\right). \end{align*}

 \nin We now compute the $q$-expansion of $\mathfrak{K}^{r-A}$. We may use Newton's binomial theorem because $\mathfrak{K}^{-1} = q(1 + O(q)).$ We let $X(q)$ be the function for which 
 $\mathfrak{K}^{-1} = q(1 + X(q)).$ It follows from Lemma \ref{integral} that $X(q) \in q \Z[[q]].$ We have that $$\mathfrak{K}^{r-A} = (q(1 + X))^{A- r} = q^{A -r} (1 + X)^{A - r} = q^{A - r}\left(1 + \sum_{t =1}^{\infty} \binom{A - r}{t} X^{t}\right).$$ 

 \nin For each positive integer $t$, we recall that $\{C(t,d)\}_{d=0}^{\infty}$ denotes the sequence of integers for which 
 $$(q^{-1} \mathfrak{K}^{-1} - 1)^{t} = X^{t} = \sum_{d=0}^{\infty} C(t,d) q^{d} = \sum_{d = t}^{\infty} C(t,d) q^{d}.$$ Hence 
 \begin{align*}\mathfrak{K}^{r-A} & = q^{A -r} \left(1 + \sum_{t =1}^{\infty} \binom{A - r}{t} X^{t}\right) \\
 & = q^{A - r} \left( 1 +  \sum_{t = 1}^{\infty}  \binom{A - r}{t}\left(\sum_{d=t}^{\infty} C(t,d) q^{d}\right)\right) \\
 & = q^{A - r}\left(1 + \sum_{d=1}^{\infty} q^{d}\left(\sum_{t=0}^{d} C(t,d) \binom{A - r}{t}\right)\right). \end{align*}
 
 \nin We have that 
 \begin{align*}& \mathfrak{J}^{-A} (\mathfrak{J}-1)^{r} \;_{2}F_{1}(A, \frac{1}{2} + A , 1 + A - B; \mathfrak{J}(\tau)^{-1}) \\
& = 64^{A-r} \mathfrak{K}^{r -A} \left(1 + \sum_{s=1}^{\infty} q^{s}\left(f(s) + \sum_{k=0}^{s-1} D(s,k) f(k)\right)\right) \\
& =  64^{A-r} q^{A - r}\left(1 + \sum_{d=1}^{\infty} q^{d}\left(\sum_{t=0}^{d} C(t,d) \binom{A-r}{t}\right)\right) \left(1 + \sum_{s=1}^{\infty} q^{s}\left(f(s) + \sum_{k=0}^{s-1} D(s,k) f(k)\right)\right).  \end{align*} 

 \nin We have thus shown that there exists a sequence $\{h(K)\}_{K=1}^{\infty}$ such that 
$$ \mathfrak{J}^{-A} (\mathfrak{J}-1)^{r} \;_{2}F_{1}(A, \frac{1}{2} + A , 1 + A - B; \mathfrak{J}(\tau)^{-1}) = 64^{A -r} q^{A-r}\left(1 + \sum_{K=1}^{\infty} h(K)q^{K}\right).$$ 

\nin Moreover, we have proven that
\begin{align*} h(K)  & = \sum_{\substack{d, s \geq 0 \\ d+ s = K }} \left(\sum_{t = 0}^{d} C(t,d) \binom{A-r} {t} \right) \left(f(s) + \sum_{k=0}^{s-1} f(k) D(s,k)\right) \\
& = f(K) + \sum_{k=0}^{K - 1} f(k)D(s,k) + \sum_{\substack{d, s \geq 0 \\ d+ s = K  \\ s < K}} \left(\sum_{t = 0}^{d} C(t,d) \binom{A-r} {t} \right) \left(f(s) + \sum_{k=0}^{s-1} f(k) D(s,k)\right). 
\end{align*}

\end{proof} 

\nin We recall that the sequences $\{d(K)\}_{K=1}^{\infty}$ and $\{\widetilde{d}(K)\}_{K=1}^{\infty}$ are defined by the condition: 
$$F' = \left[\begin{matrix} q^{\frac{k_0}{12} + A -r}(1 + \sum_{K=1}^{\infty} d(K)q^{K}) \\   q^{\frac{k_0}{12} + B -r}(1 + \sum_{K=1}^{\infty} \widetilde{d}(K)q^{K}) \end{matrix} \right].$$ \\

\nin  We shall now place some assumptions on $\rho$ to ensure that all of the Fourier coefficients of $F'$ are algebraic numbers. One way to proceed is to study those representations $\rho$ for which $\rho(T)$ has finite order. \textbf{Henceforth, we shall always assume that $\rho(T)$ has finite order.} 
This assumption implies that $\rho(T)$ is diagonalizable.  We recall that $A = r + l_1$, $B = r + l_2$, $l_1 \equiv m_1 + \frac{k_0}{6} \; (\textrm{mod } \Z)$,  and $l_2 \equiv m_2 + \frac{k_0}{6} \; (\textrm{mod } \Z).$ Therefore $A - B = l_1 - l_2 \equiv m_1 - m_2 \; (\textrm{mod } \Z).$  We have previously shown that the irreducibility of $\rho$ implies that $m_1 - m_2 \not \in \Z.$ Thus $A - B \not \in \Z.$ The assumption that $\rho(T)$ has finite order implies that the eigenvalues $e^{2 \pi i m_1}$ and $e^{2 \pi i m_2}$ 
of $\rho(T)$ are roots of unity and that $m_1, m_2 \in \Q.$ Because $m_1, m_2 \in \Q$ and $k_0 \in \Z$, we have that $l_1, l_2 \in \Q.$ Thus $A - B = l_1 - l_2 \in \Q \setminus \Z.$
The fact that $l_1, l_2 \in \Q$ also implies that $\Q(A) = \Q(r) = \Q(B).$ We now give the proof of \cref{thm: algebraicbasis}.

\algebraicbasis*

\begin{proof} 
 We recall that: $$1 + \sum_{K \geq 1} d(K)q^{K} = \eta^{-2k_0} 64^{A-r} \left(1 + \sum_{K \geq 1} h(K)q^{K}\right)$$  
$$1 + \sum_{K \geq 1} \widetilde{d}(K) q^{K} = \eta^{-2k_0} 64^{B -r}\left(1 + \sum_{K \geq 1} \widetilde{h}(K) q^{K}\right)$$
 We have that $\eta^{2k_0} = q^{\frac{k_0}{12}}\prod_{n=1}^{\infty} (1 - q^{n})^{2k_0} \in q^{\frac{k_0}{12}} \Z[[q]]^{\times}.$
Therefore the Fourier coefficients of the component functions of $F'$ are algebraic numbers if for all $K$, $h(K)$ and $\widetilde{h}(K)$ are algebraic numbers. The formulas for $h(K)$ and $\widetilde{h}(K)$ in Theorem \ref{sequence} show that $h(K) \in \Q(A, r) = \Q(r)$ and $\widetilde{h}(K) \in \Q(B, r) = \Q(r).$  Thus if $r \in \overline{\Q}$ then all of the Fourier coefficients of both components of $F'$ are elements of $\Q(r)$ and are therefore algebraic numbers. For each integer $k$, the map  $Z \mapsto EZ$ gives an isomorphism from $M_{k}(\rho)$ to $M_{k}(\rho').$ Thus $M(\rho') =  M(\Gamma_{0}(2))F' \bigoplus M(\Gamma_{0}(2))D_{k_0}F'.$ The fact that the Fourier coefficients of the component functions of $F'$ are elements of $\Q(r)$ together with the fact that $E_2 \in \Z[[q]]$ imply that all of the Fourier coefficients of the component functions of $D_{k_0}F'$ are elements of $\Q(r)$ and are therefore algebraic numbers. Finally, for each integer $k$, there exists a basis of $M_{k}(\Gamma_{0}(2))$ consisting of modular forms with integral Fourier coefficients since $M(\Gamma_{0}(2)) = \C[E_4, G]$ and $E_4, G \in \Z[[q]].$ In fact, a basis for $M_{k}(\rho)$ consisting of vector-valued modular forms whose component functions have Fourier coefficients which are algebraic numbers is $\{G^{a}E_4^{b}F': 2a + 4b = k - k_0, a, b \geq 0, a, b \in \Z\} \bigcup \{G^{a}E_4^{b}D_{k_0}F': 2a + 4b = k - k_0 - 2, a, b \geq 0, a,b \in \Z \}.$

\end{proof}

\subsection{Unbounded Denominators: The Minimal Weight Case}
\nin In this section, we study the arithmetic of the Fourier coefficients of the component functions of $F'$. These Fourier coefficients are algebraic numbers but they need not be rational numbers. We therefore need to define the numerator and the denominator of an algebraic number. Let $\overline{\Z}$ denote the ring of algebraic integers. It is well-known that if $\zeta$ is an algebraic number then there exists a positive integer $N$ such that $N \zeta \in \overline{\Z}.$

\begin{definition} If $\zeta$ is a nonzero algebraic number then \textbf{the denominator of $\zeta$} is the smallest positive integer $Z$ such that 
$Z \zeta \in \overline{\Z}$ and \textbf{the numerator of } $\zeta$ is defined to be the algebraic integer $Z \zeta.$
\end{definition}
\nin We say that an integer $Z$ is \textbf{a denominator of} $\zeta$ if $Z \zeta \in \overline{\Z}.$ The collection of denominators of $\zeta$ form a non-zero ideal of $\Z$ and is therefore generated by a smallest 
positive integer, which is \textbf{the denominator of} $\zeta.$ We observe that there does not exist an integer $j > 1$ which divides both the denominator and numerator of $\zeta$ in the ring $\overline{\Z}$. To see why, we notice that if there exists some integer $j > 1$ which divides the denominator $N$ of $\zeta$ and which also divides $N \zeta$ in the ring $\overline{\Z}$ then
$\frac{N}{j} \zeta \in \overline{\Z}$, which contradicts the minimality of $N.$ 
\begin{definition} Let $p$ denote a prime number. We say that an algebraic number $\zeta$
is \textbf{$p$-integral} if $p$ does not divide the denominator of $\zeta.$ 
\end{definition} 
\nin We shall have occasion to use the following elementary result.

\begin{lemma} \label{ring} Let $p$ denote a prime number. The collection of all algebraic numbers which are $p$-integral form a ring. 
\end{lemma}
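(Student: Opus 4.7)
The plan is to verify the ring axioms directly. Since the set $R_p$ of $p$-integral algebraic numbers (together with $0$, which I would take $p$-integral by convention) is contained in the field $\overline{\Q}$, associativity, commutativity, and distributivity are automatic, and $0,1 \in R_p$ since both have denominator $1$. The content of the lemma is therefore closure of $R_p$ under negation, addition, and multiplication. Closure under negation is immediate, since $-\zeta$ and $\zeta$ share the same set of denominators in the sense of the paper.

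The engine behind the remaining two closures is the observation already recorded in the excerpt: for any nonzero algebraic number $\zeta$, the set $\{M \in \Z : M \zeta \in \overline{\Z}\}$ is a nonzero ideal of $\Z$, so \emph{every} denominator of $\zeta$ is a multiple of the minimal one. Given nonzero $\zeta_1, \zeta_2 \in R_p$ with minimal denominators $N_1$ and $N_2$, both coprime to $p$, I would exploit the identities
$$N_1 N_2 (\zeta_1 + \zeta_2) = N_2 (N_1 \zeta_1) + N_1 (N_2 \zeta_2), \qquad N_1 N_2 \, \zeta_1 \zeta_2 = (N_1 \zeta_1)(N_2 \zeta_2),$$
together with the standard fact that $\overline{\Z}$ is a ring, to conclude that $N_1 N_2$ is \emph{a} denominator of $\zeta_1 + \zeta_2$ and of $\zeta_1 \zeta_2$. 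By the ideal property above, the minimal denominator of each of these divides $N_1 N_2$, and since $p \nmid N_1 N_2$, neither minimal denominator is divisible by $p$.

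I do not anticipate a serious obstacle. The only mildly delicate point is the distinction between ``$N$ is \emph{a} denominator'' and ``$N$ is \emph{the} (minimal) denominator'', and this is exactly what the ideal characterization of denominators recalled in the excerpt is designed to bridge. The trivial case in which $\zeta_1 + \zeta_2$ or $\zeta_1 \zeta_2$ vanishes is accommodated by the convention that $0 \in R_p$; with that convention in place the verification is complete.
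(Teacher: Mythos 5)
Your proof is correct: the paper states this lemma as an elementary fact and supplies no proof of its own, and your argument — using $N_1N_2(\zeta_1+\zeta_2)=N_2(N_1\zeta_1)+N_1(N_2\zeta_2)$ and $N_1N_2\,\zeta_1\zeta_2=(N_1\zeta_1)(N_2\zeta_2)$ to exhibit $N_1N_2$ as \emph{a} denominator, then invoking the ideal property of the set of denominators so that the minimal denominator divides $N_1N_2$ and hence is prime to $p$ — is precisely the standard verification the paper implicitly relies on. The handling of the zero element by convention and of negation via equality of denominator sets is fine, so nothing further is needed.
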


 \nin We shall profitably use the following Lemma when studying the denominators of the Fourier coefficients of $F'.$
\begin{lemma} \label{quadratic}  Let $M$ denote a square-free integer. Let $p$ denote an odd prime number for which $M$ is not a quadratic residue mod $p$. Let $X \in\Q(\sqrt{M})$ such that $X \not \in \Q.$ Let $Z$ denote the smallest positive integer 
such that $ZX$ is an algebraic integer and let $Y := ZX$. Let $y$ and $z$ denote the integers for which $Y = \frac{x + y \sqrt{M}}{2}.$  Let $R \in \Q$. If $p \nmid y$ then $p$ does not divide the numerator of any element in the set
$\{(X+R)_{t}: t \geq 1 \}.$ \end{lemma}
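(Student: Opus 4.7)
The plan is to work locally at $p$. Because $p$ is odd and $M$ is a quadratic non-residue modulo $p$ (so in particular $p \nmid M$), the prime $p$ is inert in $\mathcal{O}_K$ where $K := \Q(\sqrt{M})$; let $P$ denote the unique prime of $\mathcal{O}_K$ above $p$ and $v$ the normalized valuation attached to $P$, which extends the $p$-adic valuation $v_p$ on $\Q$ with ramification index $1$. The elementary input is that $\sqrt{M}$ is a $P$-adic unit (its square $M$ is a $p$-adic unit), so $\{1,\sqrt{M}\}$ reduces to an $\mathbb{F}_p$-basis of the residue field $\mathbb{F}_{p^2}$, which gives the valuation formula $v(a+b\sqrt{M}) = \min(v_p(a), v_p(b))$ for every $a,b \in \Q$.

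The heart of the argument is to show that each factor $X+R+i$ of the Pochhammer product $(X+R)_t = \prod_{i=0}^{t-1}(X+R+i)$ satisfies $v \leq 0$. Since $Y = ZX = \tfrac{x+y\sqrt{M}}{2}$, I would write $X = \tfrac{x}{2Z} + \tfrac{y}{2Z}\sqrt{M}$ and observe that adding the rational number $R+i$ changes only the rational part, so that the $\sqrt{M}$-coefficient remains $y/(2Z)$. Its $p$-adic valuation is exactly $-v_p(Z)$ by the hypotheses that $p$ is odd and $p \nmid y$. The valuation formula above then yields
\[
v(X+R+i) \;\leq\; v_p\!\left(\tfrac{y}{2Z}\right) \;=\; -v_p(Z) \;\leq\; 0,
\]
and summing over $i = 0, \ldots, t-1$ gives $v\bigl((X+R)_t\bigr) \leq 0$ for every $t \geq 1$.

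Finally I would translate this valuation inequality into the stated claim about numerators. Let $\zeta := (X+R)_t$, let $Z_\zeta$ be its denominator in the sense defined just before the lemma, and let $\nu_\zeta := Z_\zeta \zeta$ be its numerator. Because $P$ is the unique prime above $p$ with $e_P = 1$, the minimality of the denominator forces the $p$-part to satisfy $v_p(Z_\zeta) = \max(0, -v(\zeta))$; combined with $v(\zeta) \leq 0$ this gives $v(\nu_\zeta) = v_p(Z_\zeta) + v(\zeta) = 0$. Inertness then makes $v(\nu_\zeta) = 0$ equivalent to $\nu_\zeta \notin p\overline{\Z}$, which is the desired conclusion. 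The calculation is routine once inertness is unpacked; the only real subtlety is this last step, where one must use both inertness (so that a single valuation $v$ governs divisibility by $p$ in $\overline{\Z}$) and the coprimality of numerator and denominator (so that $v(\zeta) \leq 0$ translates to $v(\nu_\zeta) = 0$ rather than merely $v(\nu_\zeta) \geq 0$).
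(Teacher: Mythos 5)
Your proof is correct, and it reaches the conclusion by a genuinely different (local) route than the paper. The paper argues globally and by contradiction: writing $(X+R)_t = Z^{-t}\prod_{i=0}^{t-1}(Y+RZ+iZ)$, it assumes $p$ divides the numerator, passes to the field norm $N$, deduces $p \mid N(Y+RZ+jZ)$ for some $j$ from $p^2 \mid \prod_i N(Y+RZ+iZ)$, and then the explicit congruence $\left(x+2(R+j)Z\right)^2 \equiv M y^2 \pmod p$ together with $p \nmid y$ forces $M$ to be a quadratic residue, a contradiction. You instead localize at $p$: the non-residue hypothesis makes $p$ inert in $\Q(\sqrt{M})$, the residue-field independence of $1,\sqrt{M}$ gives the valuation formula $v(a+b\sqrt{M})=\min(v_p(a),v_p(b))$, each factor $X+R+i$ then has $v \le -v_p(Z)\le 0$ because its $\sqrt{M}$-coordinate $y/(2Z)$ has valuation $-v_p(Z)$ (using $p$ odd and $p\nmid y$), and the exact formula $v_p(Z_\zeta)=\max(0,-v(\zeta))$ for the minimal denominator turns $v(\zeta)\le 0$ into $v(\nu_\zeta)=0$, i.e.\ $p\nmid \nu_\zeta$ in $\overline{\Z}$. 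The two arguments rest on the same arithmetic fact — the anisotropy of $u^2-Mv^2$ mod $p$ is precisely the inertness you invoke — but they trade differently: the paper's version is elementary and self-contained (norms and one congruence, no splitting theory), while yours is structurally cleaner and yields more, namely exact valuation information (the numerator is a unit at the prime above $p$, and the $p$-part of the denominator of $(X+R)_t$ is pinned down), avoids the contradiction format, and is insensitive to whether $RZ$ is an integer, a point the paper's reduction of $4N(Y+RZ+jZ)$ mod $p$ quietly assumes (harmless there, since in the applications $R$ is an integer, in fact $R=0$). Your one terse step — that minimality of the denominator forces $v_p(Z_\zeta)=\max(0,-v(\zeta))$ — is a standard fact and holds because the admissible integer multiples form an ideal of $\Z$ and the condition at the unramified prime above $p$ is independent of the conditions at the other primes, so no gap results.
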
 
\nin \begin{remark} We note that $y \neq 0$ since $Y \not \in \Q.$ We also note that $y$ and $z$ have the same parity since $Y$ is an algebraic integer. \end{remark}
\begin{proof} Let $\sigma$ denote the non-trivial element in $\textrm{Gal}(\Q(\sqrt{M})/\Q)$ and let $N$ denote the norm map from $\Q(\sqrt{M})$ from $\Q$. Let $\mathcal{O}_{\Q(\sqrt{M})}$ denote the ring of integers of 
$\Q(\sqrt{M}).$ We proceed by contradiction and suppose that there exists some positive integer $t$ such that $p$ divides the numerator of $(X + R)_{t} = (\frac{Y + RZ}{Z} )_{t} = Z^{-t} \prod_{i=0}^{t-1}(Y + RZ + iZ)$ in the ring $\mathcal{O}_{\Q(\sqrt{M})}.$ Then  $p \mid  \prod_{i=0}^{t-1}(Y + RZ + iZ)$ in the ring $\mathcal{O}_{\Q(\sqrt{M})}$ and $p \nmid Z^{t}$ in the ring $\mathcal{O}_{\Q(\sqrt{M})}.$ Thus $p \nmid Z.$ We have that $N(p) = p^{2} \mid \prod_{i=0}^{t-1} N(Y + RZ + iZ)$ in the ring $\Z.$ Thus $p \mid N(Y + RZ + jZ)$ for some integer $j$ with $0 \leq j \leq t -1.$ Therefore 
$$0 \equiv 4 N(Y + RZ + jZ) = 4 N \left(\frac{x}{2} + RZ  + jZ + \frac{y}{2} \sqrt{M}\right) = \left(x + 2(R+j)Z \right)^{2} - My^{2} \; (\textrm{mod } p).$$
As $p \nmid y$, $M$ is a quadratic residue mod $p$. This is a contradiction and our proof is now complete. \end{proof}

\nin We recall that $\Q(A) = \Q(B) = \Q(r)$ and $l_1, l_2 \in \Q$ since $\rho(T)$ has finite order. We note that $a, b + c \in \Q$ since $l_1 l_2 = b + c$ and
$l_1 + l_2 = \frac{1}{6} - a.$ We recall that $r$ satisfies the quadratic equation $r^2 + (\frac{-1 -3a}{3})r + b + 4c = 0.$  We have that $\Q(\frac{-1 -3a}{3}, b + 4c) = \Q(b+ 4c) = \Q(c).$ \newline
\nin Thus $[\Q(c)(r): \Q(c)] \leq 2.$ We are most interested in the case when $c \in \Q.$ If $c \in \Q$ and if $[\Q(r): \Q] = 2$ then we will be able to apply Lemma \ref{quadratic} to analyze the denominators of the Fourier coefficients of $F'.$

\begin{assumption} \label{HYP}
 \textbf{Throughout the rest of this paper, we shall assume that $\rho(T)$ has finite order, $c \in \Q$ and that $[\Q(r): \Q] = 2.$} 
\end{assumption}
\begin{definition} Let $M$ denote the square-free integer for which $\Q(r) = \Q(\sqrt{M})$. 
\end{definition} 
\nin We have previously shown that $A - B \in \Q \setminus \Z.$ We therefore make the following definition. 

\begin{definition} Let $u,v \in \Z$ with $v > 1$, $\textrm{gcd}(u,v) = 1$ such that $A - B = \frac{u}{v}$.
\end{definition} 
\begin{definition}  Let $S$ denote the set of odd prime numbers $p$ for which $M$ is not a quadratic residue mod $p$ and $p \equiv u \; (\textrm{mod } v).$ 
\end{definition} 
\begin{definition} Let $\widetilde{S}$ denote the set of odd prime numbers $p$ for which $M$ is not a quadratic residue mod $p$ and $p \equiv \; -u$ (\textrm{mod }$v$).  
\end{definition}
\nin It follows from the quadratic reciprocity law and Dirichlet's theorem on primes in arithmetic progressions that if $S$ is infinite then $S$ has positive density in the set of prime numbers and if $\widetilde{S}$ is infinite then $\widetilde{S}$ has positive density in the set of prime numbers. \\

\nin We will show that if $S$ is infinite then every sufficiently large element in $S$ divides the denominator of at least one Fourier coefficient of the first component of $F'.$
We will also show that if $\widetilde{S}$ is infinite then every sufficiently large element in $\widetilde{S}$ divides the denominator of at least one Fourier coefficient of the second component of $F'.$ At the end of this section, we will give examples of representations for which we can prove that $S$ and $\widetilde{S}$ are infinite. We begin with the following proposition. 

\begin{proposition} \label{gprop}
Assume that $S$ is an infinite set.  Let $K$ denote an integer such that $p_{K} := u + Kv \in S.$ If $m + n \leq K$ and if $m \neq K$ then for all sufficiently large $K$, $g(m,n)$ is $p_{K}$-integral and $p_{K}$ does not divide the numerator of $g(m,n).$ Consequently, for all sufficiently large $K$, $f(k)$ is $p_{K}$-integral provided $k < K.$
\end{proposition}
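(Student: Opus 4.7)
The plan is to show that $v_{p_K}(g(m,n)) = 0$ for all pairs $(m,n)$ with $m+n \leq K$ and $m < K$, once $K$ is sufficiently large, where $v_{p_K}$ denotes the valuation on $\Q(\sqrt M)$ associated with the unique prime ideal above $p_K$ in $\mathcal{O}_{\Q(\sqrt M)}$. Because $p_K \in S$, $M$ is a non-residue mod $p_K$, so $p_K$ is inert in $\Q(\sqrt M) = \Q(r)$; hence $p_K \mathcal{O}_{\Q(\sqrt M)}$ is prime and $v_{p_K}$ is multiplicative on products. The first step is to clear the rational denominator of $(1+A-B)_m$: using $1 + A - B = (v+u)/v$,
$$
g(m,n) \;=\; \frac{2^{4m+6n}\, v^m\, (-r)_n\, (2A)_{2m}}{n!\, m!\, \prod_{j=1}^{m}(jv+u)},
$$
after which I would compute $v_{p_K}$ of each factor.

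The rational factors succumb to elementary size estimates. Since $p_K$ is odd, $v_{p_K}(2^{4m+6n}) = 0$. Since $p_K = u + Kv > K \geq m, n$ for $K$ large, no factorial among $n!$ or $m!$ is divisible by $p_K$. Each factor $jv + u$ with $1 \leq j \leq m \leq K-1$ is a nonzero integer (using $\gcd(u,v)=1$) of absolute value strictly less than $p_K$ for $K$ large, so $p_K$ divides none of these factors; this is precisely where the hypothesis $m \neq K$ is essential, since for $m = K$ the factor $Kv + u$ would equal $p_K$ itself. For $K$ large enough the finitely many remaining rational quantities $v, Z_0, Z_1$ (defined in the next paragraph) are also not divisible by $p_K$.

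For the algebraic factors $(-r)_n$ and $(2A)_{2m}$ I would invoke Lemma \ref{quadratic}, first with $(X,R) = (-r, 0)$ and then with $(X,R) = (2r, 2l_1)$. The assumption $X \notin \Q$ follows from \cref{HYP}, and the assumption $p_K \nmid y$ holds for all sufficiently large $K$ because the integer $y$ appearing in the lemma is a fixed nonzero integer read off from $Z_0 \cdot (-r)$ or $Z_1 \cdot (2r)$ (where $Z_0, Z_1$ are the smallest positive integers clearing the denominators of $-r$ and $2r$). The lemma then asserts, uniformly in $n$ and $m$, that $p_K$ does not divide the numerators of $(-r)_n$ or $(2A)_{2m}$. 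Since the denominators of these two Pochhammer symbols divide the fixed integer powers $Z_0^n$ and $Z_1^{2m}$ respectively, and $p_K$ divides neither $Z_0$ nor $Z_1$ for $K$ large, both factors contribute valuation zero. Assembling everything yields $v_{p_K}(g(m,n)) = 0$, which is exactly the two assertions about $g(m,n)$. The corollary for $f(k) = \sum_{m+n=k} g(m,n)$ with $k < K$ is then immediate: every summand has $m \leq k < K$, so each $g(m,n)$ is $p_K$-integral, and Lemma \ref{ring} guarantees that $f(k)$ itself is $p_K$-integral. The principal technical obstacle is ensuring that a single threshold $K_0$---depending only on the finitely many fixed quantities $v, Z_0, Z_1, y_0, y_1$---makes Lemma \ref{quadratic} applicable uniformly for every admissible pair $(m,n)$; once this uniformity is in hand, the rest is bookkeeping.
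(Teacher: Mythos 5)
Your proposal is correct and follows essentially the same route as the paper: the same factorization of $g(m,n)$ with $(1+A-B)_m = \prod_{j=1}^{m}(u+jv)/v^{m}$, elementary size arguments ruling out $p_K$ dividing $v^m$, $2^{4m+6n}$, $m!$, $n!$, and $\prod_{j=1}^{m}(u+jv)$ (the paper handles the possibly negative factors by noting there are finitely many, while you bound their absolute value, which is equivalent), an appeal to Lemma \ref{quadratic} for the Pochhammer factors (your choice $(X,R)=(2r,2l_1)$ is the same as the paper's $X=2A$, $R=0$ since $2A=2r+2l_1$), and Lemma \ref{ring} for $f(k)$. The valuation-theoretic packaging via inertness of $p_K$ is only a cosmetic reformulation of the paper's numerator/denominator bookkeeping.
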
 

\begin{proof} We recall that $$g(m,n) := \frac{2^{4m + 6n}(-r)_{n}(2A)_{2m} }{(1 + A - B)_{m} m! n!} = \frac{2^{4m + 6n}(-r)_{n}(2A)_{2m} }{(1 + \frac{u}{v})_{m} m! n!} =  
\frac{v^{m} 2^{4m + 6n}(-r)_{n}(2A)_{2m}}{\prod_{j=1}^{m} (u + jv) m! n!}.$$  
We will show that if $K$ is sufficiently large then $p_{K}$ does not divide the numerator of $(-r)_{n}$, $p_{K}$ does not divide the numerator of $(2A)_{2m}$, and $p_{K}$ does not divide
any of the integers $v^{m}$, $2^{4m + 6n}, \prod_{j=1}^{m}(u+jv), m!, n!.$ Lemma \ref{ring} will then imply that $g(m,n)$ is $p_{K}$-integral. \\

\nin The stipulations $m + n \leq K$ and $m \neq K$ imply $m \leq K -1$ and $n \leq K.$ 
In particular, $p_{K} = u + Kv > u + mv \geq u + jv$ for any $j$ with $1 \leq j \leq m$. Thus $p_{K}$ does not divide any of the positive elements in the set $\{u + jv: 1 \leq j \leq m \}.$
We note that $0 \not \in \{u + jv: 1 \leq j \leq m \}$ since $\textrm{gcd}(u,v) = 1$ and $v > 1.$
It is possible that some element(s) in the set  $\{u + jv: 1 \leq j \leq m \}$ are negative since $u$ might be negative. Nevertheless, only finitely many elements in 
the set $\{u + jv: 1 \leq j \leq m\}$ are negative since $v > 0.$ Because $u$ and $v$ are fixed, we may choose a sufficiently large $K$ such that $p_{K}$ does not divide any of the negative elements 
in the set $\{u + jv: 1 \leq j \leq m\}.$ For such a $K$, $p_{K}$ does not divide any element in the set $\{u + jv: 1 \leq j \leq m\}.$ 
Because $p_{K}$ is prime, $p_{K} \nmid \prod_{j=1}^{m} (u + jv).$ In particular, we have shown that $p_{K} \nmid \prod_{j=1}^{K-1}(u + jv)$. \\

\nin If $K$ is sufficiently large then $p_{K} > K$ and thus $p_{K} > m$ and $p_{K} > n.$ Because $p_{K}$ is prime, $p_{K} \nmid m!$ and $p_{K} \nmid n!.$ If $K$ is sufficiently large then $p_{K} \nmid v$ and so $p_{K} \nmid v^{m}$ for any $m.$ We recall that $\Q(A) = \Q(r) = \Q(\sqrt{M}).$
Let $x_1, y_1, x_2, y_2 \in \Z$ such that $2A = \frac{x_1  + y_{1}\sqrt{M}}{2}$ and $-r = \frac{x_2 + y_2 \sqrt{M}}{2}.$ We note that $y_1 \neq 0$ and $y_2 \neq 0$ since $A, r \not \in \Q.$
If $K$ is sufficiently large then $p_{K} \nmid y_1$ and $p_{K} \nmid y_2$  and it then follows from Lemma \ref{quadratic} that $p_{K}$ does not divide the numerator of $(2A)_{2m}$ for any $m$ and $p_{K}$ does not divide the numerator of $(-r)_{n}$ for any $n.$ Moreover, if $K$ is sufficiently large then $p_K$ also does not divide the denominators of $2A$ and $-r.$ Hence $p_{K}$ does not divide the denominators of $(2A)_{2m}$ and $(-r)_{n}$ for any $n$ and $m$. Finally, $p_{K} \nmid 2^{4m + 6n}$ since $p_{K}$ is an odd prime.  We have shown that $p_{K}$ does not divide the numerator of 
$(-r)_{n}(2A)_{2m}$ and that $p_{K}$ does not divide any of the integers $v^{m}$, $2^{4m + 6n}$, $\prod_{j=1}^{m} (u + jv)$, $m!$, and $n!$. We conclude that $g(m,n)$ is $p_{K}$-integral by applying Lemma \ref{ring}. \\ 

\nin  If $k < K$ and if $m + n = k$ then we have shown that $g(m,n)$ is $p_{K}$-integral. 
Hence if $k< K$ then $f(k) := \sum \limits_{\substack{n,m \geq 0 \\ n + m = k }} g(m,n)$ is a sum of $p_{K}$-integral numbers and is therefore $p_{K}$ -integral.
\end{proof} 

\begin{thm} Assume that $\rho$ satisfies Assumption \ref{HYP}. Assume that $S$ is an infinite set. If $K$ is sufficiently large then $f(K)$ is not $p_{K}$-integral and $p_{K}f(K)$ is $p_{K}$-integral.
\end{thm}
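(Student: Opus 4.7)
The plan is to isolate the single summand $g(K,0)$ from $f(K)=\sum_{m+n=K}g(m,n)$: it is the unique term whose denominator contains the factor $u+Kv=p_K$, while all the others will turn out to be $p_K$-integral. So $g(K,0)$ completely controls the $p_K$-adic behavior of $f(K)$.

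First I would split $f(K)=g(K,0)+R(K)$, where $R(K):=\sum_{m+n=K,\,m<K}g(m,n)$. Every summand of $R(K)$ satisfies $m+n\le K$ and $m\ne K$, so Proposition \ref{gprop} applies termwise and $R(K)$ is $p_K$-integral for sufficiently large $K$. Next I would analyze $p_K\,g(K,0)$ directly: by definition
\[
p_K\,g(K,0)=\frac{v^{K}\cdot 2^{4K}\cdot (2A)_{2K}}{\prod_{j=1}^{K-1}(u+jv)\cdot K!},
\]
with the $j=K$ factor in the denominator having been killed by the multiplication by $p_K$. Re-running the estimates from the proof of Proposition \ref{gprop}, for $K$ large enough $p_K$ divides none of $v^K$, $2^{4K}$, $\prod_{j=1}^{K-1}(u+jv)$, or $K!$; and Lemma \ref{quadratic} applied with $X=2A\in\Q(\sqrt M)\setminus\Q$ and $R=0$ shows, once $K$ is large enough that $p_K$ avoids the fixed nonzero integer $y_1$ attached to the $\sqrt M$-component of $2A$, that neither the numerator nor the denominator of $(2A)_{2K}$ is divisible by $p_K$. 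Lemma \ref{ring} then gives that $p_K\,g(K,0)$ is $p_K$-integral and that $p_K$ does not divide its numerator.

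Finally I would combine these two ingredients. Since $R(K)$ is $p_K$-integral, $p_K R(K)$ is a $p_K$-integral element whose numerator is divisible by $p_K$. Therefore $p_K f(K)=p_K\,g(K,0)+p_K R(K)$ is $p_K$-integral, and its numerator differs from that of $p_K\,g(K,0)$ by a multiple of $p_K$; in particular $p_K$ still does not divide the numerator of $p_K f(K)$. This is the second assertion. For the first, if $f(K)$ were itself $p_K$-integral then $p_K f(K)$ would have its numerator divisible by $p_K$, a contradiction. The main subtlety is that the Pochhammer factor $(2A)_{2K}$ grows with $K$ and could a priori conspire with $p_K$ to introduce a hidden factor of $p_K$ in the denominator or an unwanted cancellation in the numerator; Lemma \ref{quadratic} precisely rules this out using the hypothesis $p_K\in S$, i.e.\ that $M$ is not a quadratic residue modulo $p_K$, and the phrase \emph{sufficiently large $K$} absorbs the finitely many bad primes dividing $v$, $y_1$, or the negative values among $u+jv$ with $1\le j<K$.
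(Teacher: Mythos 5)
Your proposal is correct and follows essentially the same route as the paper: split off the single term $g(K,0)$, use Proposition \ref{gprop} and Lemma \ref{ring} to see the remaining sum is $p_K$-integral, and use Lemma \ref{quadratic} together with the fact that $p_K=u+Kv$ occurs exactly once in $\prod_{j=1}^{K}(u+jv)$ to see that $p_K$ exactly divides the denominator of $g(K,0)$. Your reformulation via $p_Kg(K,0)$ being $p_K$-integral with numerator prime to $p_K$ is just a repackaging of the paper's statement that $p_K\parallel\prod_{j=1}^{K}(u+jv)$, so both conclusions follow as in the paper.
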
 
\begin{proof} We have that $f(K) = g(K,0) + \sum \limits_{\substack{m + n \leq K \\ m \neq K}} g(m,n).$ We have shown in Proposition \ref{gprop} that $g(m,n)$ is $p_{K}$-integral if $m + n \leq K$ and if $m \neq K.$
We apply Lemma \ref{ring} to get that $ \sum \limits_{\substack{m + n \leq K \\ m \neq K}} g(m,n)$ is $p_{K}$-integral. It now suffices to show that $g(K,0)$ is not $p_{K}$-integral. 
We note that $g(K,0) = \frac{v^{K} 2^{4K}(2A)_{2K}}{\prod_{j=1}^{K} (u + jv) K!}.$ We have previously shown that if $K$ is sufficiently large then
$v^{K}, 2^{4K}$, $K!$, and $\prod_{j=1}^{K-1} (u + jv)$ are not divisible by $p_{K}$ and that $p_{K}$ does not divide the numerator nor the denominator of $(2A)_{2K}.$ 
Therefore $p_{K} \parallel (u + Kv) \prod_{j=1}^{K-1} (u + jv) = \prod_{j=1}^{K} (u + jv).$ We conclude that $p_{K}$ but not $p_{K}^2$ divides the denominator of $g(K,0)$. Hence $f(K)$ is not $p_{K}$-integral and $p_{K}f(K)$ is $p_{K}$-integral. 
\end{proof}

\begin{thm} \label{UBD} Assume that $\rho$ satisfies Assumption \ref{HYP}.
Assume $S$ is infinite. If $K$ is sufficiently large then the denominator of $h(K)$ is divisible by $p_{K}$. Moreover, $h(J)$ is $p_{K}$-integral if $J < K.$
Hence every prime in $S$ which is sufficiently large divides the denominator of $h(K)$ for some $K$. 
\end{thm}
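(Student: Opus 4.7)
My plan is to exploit the decomposition of $h(K)$ coming from the second formula in \cref{sequence}:
$$h(K) = f(K) + \sum_{k=0}^{K-1} f(k) D(K,k) + \sum_{\substack{d,s \geq 0 \\ d+s = K \\ s < K}}\left(\sum_{t=0}^{d} C(t,d)\binom{A-r}{t}\right)\left(\sum_{k=0}^{s} f(k) D(s,k)\right),$$
and to show that the non-$f(K)$ contributions are $p_K$-integral. Combined with the previous theorem (which shows that $p_K$ exactly divides the denominator of $f(K)$), this will force $p_K$ to divide the denominator of $h(K)$.

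First I would verify $p_K$-integrality of $\binom{A-r}{t}$ for $0 \leq t \leq K$, once $K$ is large enough. Because $\rho(T)$ has finite order we have $A - r = l_1 \in \Q$; writing $l_1 = a/b$ in lowest terms, the denominator of $\binom{l_1}{t}$ divides $b^{t} \cdot t!$. Since $p_K = u + Kv$ exceeds both $K$ and $b$ for $K$ large, $p_K$ divides neither $t!$ nor $b^t$ for $t \leq K$, so $\binom{l_1}{t}$ is $p_K$-integral. Next, I would apply \cref{gprop} to conclude that $f(k)$ is $p_K$-integral for every $k < K$. Since the integers $C(t,d)$ and $D(s,k)$ are trivially $p_K$-integral, \cref{ring} shows that the middle sum $\sum_{k=0}^{K-1} f(k) D(K,k)$ is $p_K$-integral, and every term in the third sum (where $k \leq s < K$ and $t \leq d \leq K$) is $p_K$-integral as well.

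Combining these observations gives $h(K) = f(K) + R_K$ with $R_K$ a $p_K$-integral algebraic number for $K$ sufficiently large. By the preceding theorem, $p_K$ divides the denominator of $f(K)$ but $p_K^2$ does not; equivalently, $p_K f(K)$ is $p_K$-integral while $f(K)$ is not. Therefore $h(K)$ is not $p_K$-integral either (if it were, then $f(K) = h(K) - R_K$ would be $p_K$-integral by \cref{ring}), so $p_K$ divides the denominator of $h(K)$. The ``moreover'' statement is immediate from the same decomposition applied to $h(J)$ for $J < K$: every term of $h(J)$ involves only $f(k)$ with $k \leq J < K$ and binomial coefficients $\binom{A-r}{t}$ with $t \leq J < K$, all of which are $p_K$-integral by the arguments above, so $h(J)$ itself is $p_K$-integral.

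The steps are largely bookkeeping on top of \cref{gprop} and the previous theorem; the only mildly delicate point is ensuring that ``sufficiently large $K$'' is chosen uniformly to handle the finite list of bad primes (the denominator of $l_1$, small values where $p_K \leq K$, and the finitely many negative entries in $\{u+jv\}$ that arose in \cref{gprop}). This uniform choice is the main obstacle, but it is a finite one and poses no real difficulty since $u$, $v$, and $l_1$ are fixed by $\rho$.
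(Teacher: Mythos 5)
Your proposal is correct and follows essentially the same route as the paper: the same decomposition of $h(K)$ from \cref{sequence}, the same use of \cref{gprop} and the preceding theorem for the $p_K$-integrality of $f(k)$ ($k<K$) and the non-$p_K$-integrality of $f(K)$, and the same rational-denominator argument for $\binom{A-r}{t}$. No gaps.
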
 
\begin{proof}
We recall that:
$$ h(K)   = f(K) + \sum_{k=0}^{K - 1} f(k)D(s,k) + \sum_{\substack{d, s \geq 0 \\ d+ s = K  \\ s < K}} \left(\sum_{t = 0}^{d} C(t,d) \binom{A-r} {t} \right) \left(\sum_{k=0}^{s} f(k) D(s,k)\right).$$
We have shown that $f(K)$ is not $p_{K}$-integral and that $f(k)$ is $p_{K}$-integral if $k < K$. Moreover, the numbers $D(s,k)$ and $C(t,d)$ are integers. 
Thus for all $s \leq K -1$, $\sum_{k=0}^{s} f(k)D(s,k)$ is $p_{K}$-integral. We recall that $A - r = l_1 \in \Q.$
Let $y,z \in \Z$ such that $l_1 = \frac{y}{z}.$ Then $\binom{A - r}{t} = \binom{l_1}{t} = \frac{\prod_{j=0}^{t-1} (y - jz)}{z^{t} t!}.$ We note that $t \leq d \leq K$ and $K < p_K$ if $K$ is sufficiently large.
Therefore $p_{K} \nmid t!$ if $K$ is sufficiently large. We also note that $p_{K} \nmid z$ if $K$ is sufficiently large and thus $p_{K} \nmid z^{t}.$ Thus if $K$ is sufficiently large then $\binom{A-r}{t}$ is $p_{K}$-integral for all $t \leq K.$ Hence for all $d \leq K$, $\sum_{t = 0}^{d} C(t,d) \binom{A-r} {t}$ is $p_{K}$-integral. Thus $h(J)$ is $p_{K}$-integral if $J < K$. 
We have also shown that $$h(K) - f(K) = \sum_{k=0}^{K - 1} f(k)D(s,k) + \sum_{\substack{d, s \geq 0 \\ d+ s = K  \\ s < K}} \bigg(\sum_{t = 0}^{d} C(t,d) \binom{A-r} {t} \bigg) \bigg(\sum_{k=0}^{s} f(k) D(s,k)\bigg)$$ is $p_{K}$-integral. Because $f(K)$ is not $p_K$-integral, $h(K)$ is not $p_K$-integral. 
\end{proof}

\begin{thm}  Assume that $\rho$ satisfies Assumption \ref{HYP}. Let $\widetilde{p_{K}} := -u +Kv$. Assume that $\widetilde{S}$ is infinite. If $K$ is sufficiently large then $\widetilde{p_{K}}$ divides the denominator of $\widetilde{h}(K).$
Moreover, $\widetilde{h}(J)$ is $p_{K}$-integral if $J < K.$
Thus the set of primes that divide the denominator of $\widetilde{h}(K)$ for some $K$ is infinite and has positive density within the set of primes.
\end{thm}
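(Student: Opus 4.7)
The plan is to mirror the strategy of the previous theorem for the first component function, transposing each statement about $g(m,n)$, $f(k)$, and $h(K)$ into the analogous statement about $\widetilde{g}(m,n)$, $\widetilde{f}(k)$, and $\widetilde{h}(K)$, and using $\widetilde{p_K} = -u + Kv$ in place of $p_K = u + Kv$. The key identity driving the argument is
\[
\widetilde{g}(m,n) = \frac{2^{4m+6n}(-r)_n (2B)_{2m}}{(1 + B - A)_m\, m!\, n!} = \frac{v^m 2^{4m+6n}(-r)_n (2B)_{2m}}{\prod_{j=1}^{m}(-u + jv)\, m!\, n!},
\]
since $B - A = -u/v$. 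This is the exact analog of the expression used in Proposition~\ref{gprop}, with $A$ replaced by $B$ and $u$ replaced by $-u$ in the product in the denominator.

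The first step will be to prove the analog of Proposition~\ref{gprop}: if $m + n \leq K$ and $m \neq K$, and $K$ is sufficiently large with $\widetilde{p_K} \in \widetilde{S}$, then $\widetilde{g}(m,n)$ is $\widetilde{p_K}$-integral, and consequently $\widetilde{f}(k)$ is $\widetilde{p_K}$-integral for $k < K$. The arguments controlling $m!$, $n!$, $v^m$, $2^{4m+6n}$, and the product $\prod_{j=1}^{m}(-u+jv)$ for $m \leq K-1$ go through verbatim, using that $\widetilde{p_K}$ avoids the positive members of $\{-u+jv : 1 \leq j \leq m\}$ by size and the finitely many negative ones once $K$ is large. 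For the Pochhammer factor $(2B)_{2m}$ we apply Lemma~\ref{quadratic} to the element $2B \in \Q(\sqrt{M}) \setminus \Q$, and for $(-r)_n$ we apply it to $-r$, using that the imaginary parts of their integral multiples are nonzero and that $\widetilde{p_K}$ avoids those finitely many exceptional primes for large $K$; since $\widetilde{p_K} \in \widetilde{S}$ means $M$ is a non-residue mod $\widetilde{p_K}$, the hypotheses of Lemma~\ref{quadratic} are satisfied.

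The second step is the analog of the middle theorem: $\widetilde{f}(K)$ is not $\widetilde{p_K}$-integral but $\widetilde{p_K}\widetilde{f}(K)$ is. Writing $\widetilde{f}(K) = \widetilde{g}(K,0) + \sum_{m+n \leq K, m \neq K} \widetilde{g}(m,n)$, the sum is $\widetilde{p_K}$-integral by Step~1 and Lemma~\ref{ring}, and
\[
\widetilde{g}(K,0) = \frac{v^K 2^{4K} (2B)_{2K}}{\prod_{j=1}^{K}(-u+jv)\, K!}
\]
has exactly one factor of $\widetilde{p_K}$ in its denominator, coming from the $j = K$ term $-u + Kv = \widetilde{p_K}$; the numerator is $\widetilde{p_K}$-coprime by Lemma~\ref{quadratic} applied to $2B$, so $\widetilde{p_K} \parallel \mathrm{denom}(\widetilde{g}(K,0))$.

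Finally, I would plug these into the explicit formula from Theorem~\ref{sequence}:
\[
\widetilde{h}(K) = \widetilde{f}(K) + \sum_{k=0}^{K-1} \widetilde{f}(k) D(s,k) + \sum_{\substack{d+s = K \\ s < K}} \Bigl(\sum_{t=0}^{d} C(t,d)\binom{B-r}{t}\Bigr)\Bigl(\sum_{k=0}^{s} \widetilde{f}(k)D(s,k)\Bigr).
\]
The $D(s,k)$ and $C(t,d)$ are integers, the inner sums over $\widetilde{f}(k)$ for $k \leq s < K$ are $\widetilde{p_K}$-integral by Step~1, and $\binom{B-r}{t} = \binom{l_2}{t}$ is $\widetilde{p_K}$-integral for $t \leq K$ and large $K$ because $l_2 \in \Q$ has a fixed denominator and $t! < \widetilde{p_K}$. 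Hence $\widetilde{h}(K) - \widetilde{f}(K)$ is $\widetilde{p_K}$-integral while $\widetilde{f}(K)$ is not, so $\widetilde{p_K}$ divides the denominator of $\widetilde{h}(K)$. The same bookkeeping shows $\widetilde{h}(J)$ is $\widetilde{p_K}$-integral for $J < K$. The density statement then follows from Dirichlet's theorem on primes in the arithmetic progression $-u \pmod{v}$ combined with quadratic reciprocity, exactly as noted before Proposition~\ref{gprop}. The main technical obstacle, as in the $h(K)$ case, is ensuring that Lemma~\ref{quadratic} applies uniformly to both $(2B)_{2m}$ and $(-r)_n$; this hinges on $B, r \notin \Q$, which is guaranteed by Assumption~\ref{HYP} since $\Q(B) = \Q(r) = \Q(\sqrt{M})$ with $M$ squarefree and nontrivial.
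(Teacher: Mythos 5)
Your proposal is correct and follows exactly the route the paper takes: the paper simply declares the proof ``completely analogous'' to the case of $h(K)$, citing the same key identity $\widetilde{g}(m,n) = \frac{v^m 2^{4m+6n}(-r)_n(2B)_{2m}}{\prod_{j=1}^{m}(-u+jv)\,m!\,n!}$ coming from $B-A = -u/v$, which is precisely the transposition you carry out (replacing $A$ by $B$, $u$ by $-u$, and $p_K$ by $\widetilde{p_K}$ in Proposition \ref{gprop}, the non-integrality of $f(K)$, and the formula of Theorem \ref{sequence}). Your spelled-out version is a faithful expansion of that analogy, with the only cosmetic quibble being the phrase ``imaginary parts,'' which should read the coefficient of $\sqrt{M}$.
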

\begin{proof} The proof is completely analogous to the proof of Theorem \ref{UBD}. We note that $\Q(B) = \Q(A) = \Q(r) = \Q(\sqrt{M})$ and $B - A = \frac{-u}{v}.$ We also have that
 $$\widetilde{g}(m,n) := \frac{2^{4m + 6n}(-r)_{n}(2B)_{2m} }{(1 + B - A)_{m} m! n!} = \frac{2^{4m + 6n}(-r)_{n}(2B)_{2m} }{(1 + \frac{-u}{v})_{m} m! n!} =  
\frac{v^{m} 2^{4m + 6n}(-r)_{n}(2A)_{2m}}{\prod_{j=1}^{m} (-u + jv) m! n!}.$$  
\end{proof} 
\nin We recall that $\{d(K) \}_{K=1}^{\infty}$ and  
$\{\widetilde{d}(K) \}_{K=1}^{\infty}$ denote the sequences for which 
$$F' = \left[ \begin{matrix}  q^{\frac{k_0}{12} + A -r}(1 + \sum_{K=1}^{\infty} d(K)q^{K})  \\  q^{\frac{k_0}{12} + B -r}(1 + \sum_{K=1}^{\infty} \widetilde{d}(K)q^{K}) \end{matrix} \right].$$

\begin{thm} \label{fullubd}  Assume that $\rho$ satisfies Hypothesis \ref{HYP}.
If $S$ is infinite then for all sufficiently large $K$, $p_{K}$ divides the denominator of $d(K)$ and $d(i)$ is $p_{K}$-integral if $i < K.$ Thus if $S$ is infinite then the set of primes that divide the denominator of at least one Fourier coefficient of the first component function of $F'$ is infinite and has positive density within the set of primes. If $\widetilde{S}$ is infinite then for all sufficiently large $K$, $\widetilde{p_{K}}$ divides the denominator of $\widetilde{d}(K)$ and $\widetilde{d}(i)$ is $\widetilde{p_{K}}$-integral if $i < K.$ 
Thus if $\widetilde{S}$ is infinite then the set of primes that divide the denominator of at least one Fourier coefficient of the second component function of $F'$ is infinite and has positive density within the set of primes. 
\end{thm}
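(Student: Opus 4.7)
The plan is to transfer the unbounded-denominator property from the coefficients $h(K)$ and $\widetilde h(K)$, for which Theorem \ref{UBD} and its counterpart already do all the arithmetic work, to the normalized coefficients $d(K)$ and $\widetilde d(K)$ via the $q$-expansion of the Dedekind eta product. Comparing the two expressions for the first component of $F'$, namely
\[
\eta^{2k_0}(\tau)\, q^{A-r}\Bigl(1 + \sum_{K \geq 1} h(K)\, q^K\Bigr) \;=\; q^{k_0/12 + A - r}\Bigl(1 + \sum_{K \geq 1} d(K)\, q^K\Bigr),
\]
and using $\eta^{2k_0} = q^{k_0/12}\prod_{n=1}^{\infty}(1-q^n)^{2k_0}$, I obtain the clean identity
\[
1 + \sum_{K \geq 1} d(K)\, q^K \;=\; \prod_{n=1}^{\infty}(1-q^n)^{2k_0}\Bigl(1 + \sum_{K \geq 1} h(K)\, q^K\Bigr).
\]
The eta product lies in $\Z[[q]]^{\times}$; writing it as $1 + \sum_{n \geq 1} e_n q^n$ with $e_n \in \Z$ and matching coefficients yields
\[
d(K) = h(K) + \sum_{n=1}^{K} e_n\, h(K-n), \qquad h(0) := 1.
\]

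With this identity in hand I fix $K$ large and assume $p_K = u + Kv \in S$. Theorem \ref{UBD} then gives two facts: $p_K$ divides the denominator of $h(K)$, while $h(J)$ is $p_K$-integral for every $J < K$. Since each $e_n$ is an ordinary integer, hence automatically $p_K$-integral, Lemma \ref{ring} shows $\sum_{n=1}^K e_n\, h(K-n)$ is $p_K$-integral. Consequently $d(K) - h(K)$ is $p_K$-integral, which forces $p_K$ to divide the denominator of $d(K)$ as well. For any index $i < K$, the same display expresses $d(i)$ as a $\Z$-linear combination of the values $h(0), h(1), \ldots, h(i)$, each of which is $p_K$-integral because $i < K$; so $d(i)$ is $p_K$-integral.

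The second half of the theorem, dealing with $\widetilde d(K)$ and primes in $\widetilde S$, is word-for-word identical after replacing $h, d, p_K, S$ by $\widetilde h, \widetilde d, \widetilde p_K, \widetilde S$ and invoking the $\widetilde h$-analogue of Theorem \ref{UBD} stated immediately before Theorem \ref{fullubd}. The positive-density assertions follow at once from the remark already recorded in the paper: by quadratic reciprocity and Dirichlet's theorem on primes in arithmetic progressions, each of $S$ and $\widetilde S$, if infinite, has positive density in the primes.

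I do not foresee a genuine obstacle. All of the serious denominator bookkeeping was done in Proposition \ref{gprop} and Theorem \ref{UBD}; the present theorem amounts to the single observation that multiplying the generating series $1 + \sum h(K)q^K$ by the integral invertible power series $\prod_{n\geq 1}(1-q^n)^{2k_0}$ can only propagate $p_K$-integrality at indices $<K$ and cannot cancel the $p_K$-failure at the top index $K$. The mild point that must be checked, and is standard, is simply that the eta product lies in $\Z[[q]]^{\times}$.
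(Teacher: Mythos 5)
Your proposal is correct and follows essentially the same route as the paper: both expand $\eta^{2k_0}=q^{k_0/12}\prod_{n\geq 1}(1-q^n)^{2k_0}$ with integer coefficients, derive the convolution identity $d(K)=h(K)+\sum_{n=1}^{K}e_n h(K-n)$, and then transfer the conclusions of Theorem \ref{UBD} (and its $\widetilde h$-analogue) directly to $d(K)$ and $\widetilde d(K)$, with the density claims coming from quadratic reciprocity and Dirichlet. No gaps; your checking that the eta product lies in $\Z[[q]]^{\times}$ is exactly the one auxiliary fact the paper also records.
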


\begin{proof} We recall that $\eta = q^{\frac{1}{24}} \prod_{n=1}^{\infty} (1 - q^n) = q^{\frac{1}{24}}(1 + O(q)) \in q^{\frac{1}{24}}\Z[[q]]^{\times}.$
Therefore $\eta^{2k_0} = q^{\frac{k_0}{12}}(1 + O(q)) \in q^{\frac{k_0}{12}} \Z[[q]]^{\times}.$  We define the sequence of integers $\{e(K)\}_{K=1}^{\infty}$ by 
setting $\eta^{2k_0} = q^{\frac{k_0}{12}}(1 + \sum_{K=1}^{\infty} e(K)q^{K}).$ We have that \begin{align*} & \eta^{2k_{0}}(\tau) (\mathfrak{J}(\tau)-1)^{r}\mathfrak{J}(\tau)^{-A}\;_{2}F_{1}(A, \frac{1}{2}+ A , 1 + A - B; \mathfrak{J}(\tau)^{-1})  \\
&= q^{\frac{k_0}{12}}(1 + \sum_{i=1}^{\infty} e(i)q^{i}) 64^{A -r} q^{A-r}\left(1 + \sum_{K=1}^{\infty} h(K)q^{K}\right). \end{align*}
Thus $1 + \sum_{K=1}^{\infty} d(K) q^{k} = (1 + \sum_{i=1}^{\infty} e(i)q^{i})(1 + \sum_{K=1}^{\infty} h(K)q^{K}).$ \newline 
\nin Hence $d(K) = h(K) + \sum_{i=0}^{K-1} e(i) h(K - i).$ We note that each $e(K - i)$ is an integer. We have proven that $h(K)$ is not $p_{K}$-integral but $h(i)$ is $p_{K}$-integral if $i < K.$
Therefore $d(i)$ is $p_{K}$-integral if $i < K$ and $d(K)$ is not $p_{K}$-integral.  The proof that if $K$ is sufficiently large then $\widetilde{d}(i)$ is $p_{K}$-integral if $i < K$ and 
$\widetilde{p_{K}}$ divides the denominator of $\widetilde{d}(K)$ is completely analogous. 
\end{proof}

\begin{lemma} \label{reciprocity} If $m_1 - m_2 \in \frac{1}{2} \Z \setminus\Z$ then $S = \widetilde{S}$, $S$ is infinite, and $S$ has density $\frac{1}{2}$ within the set of primes. 
\end{lemma}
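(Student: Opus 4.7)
The first step is to pin down the integers $u$ and $v$ from the hypothesis. Recall that $A - B \equiv m_1 - m_2 \pmod{\Z}$ and that we wrote $A - B = \frac{u}{v}$ with $v > 1$ and $\gcd(u,v) = 1$. The assumption $m_1 - m_2 \in \frac{1}{2}\Z \setminus \Z$ forces $A - B \in \frac{1}{2}\Z \setminus \Z$, so after reducing to lowest terms we must have $v = 2$ and $u$ odd.

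The second step is to observe that with $v = 2$, the congruences $p \equiv u \pmod{v}$ and $p \equiv -u \pmod{v}$ defining $S$ and $\widetilde{S}$ are both equivalent to $p$ being odd, because $u$ is odd forces $-u \equiv u \pmod{2}$. Consequently
\[
S \;=\; \widetilde{S} \;=\; \{\, p \text{ odd prime} : M \text{ is not a quadratic residue mod } p \,\}.
\]
Here $M$ is a genuine non-square integer since Assumption \ref{HYP} gives $[\Q(r):\Q] = 2$, i.e.\ $\Q(\sqrt{M}) \neq \Q$.

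For the third step, I would compute the density of this set. Because $\Q(\sqrt{M})/\Q$ is a quadratic extension, an odd prime $p$ (not dividing the discriminant, so unramified) splits in $\Q(\sqrt{M})$ precisely when $M$ is a quadratic residue mod $p$, and is inert precisely when $M$ is a non-residue. By the Chebotarev density theorem applied to $\mathrm{Gal}(\Q(\sqrt{M})/\Q) \cong \Z/2\Z$, each of the two Frobenius classes is attained by a set of primes of density $\frac{1}{2}$. Excluding the finitely many ramified primes and the prime $2$ does not alter the density, so $S$ has density $\frac{1}{2}$ within the set of all primes; in particular $S$ is infinite. (Alternatively, one may deduce this from quadratic reciprocity together with Dirichlet's theorem on primes in arithmetic progressions applied to the congruence classes modulo $4|M|$ that characterize non-residues of $M$.)

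The argument is routine; the only substantive content is the parity observation in the second step that collapses the two congruence conditions into a single one when $v = 2$, which is what forces $S = \widetilde{S}$.
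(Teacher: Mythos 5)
Your proof is correct and follows essentially the same route as the paper: reduce to $v=2$ so that the two congruence conditions mod $2$ both collapse to ``$p$ odd,'' giving $S=\widetilde{S}=\{p \text{ odd}: M \text{ is a non-residue mod } p\}$, and then obtain density $\tfrac12$ since $M$ is not a square (the paper cites quadratic reciprocity plus Dirichlet, which is the alternative you mention alongside Chebotarev). No changes needed.
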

\begin{proof}In this case, $v = 2$ and $$S = \widetilde{S} = \{p: p \textrm{ is odd and } M \textrm{ is not a quadratic residue mod } p \}.$$ The quadratic reciprocity law and Dirichlet's theorem on primes in arithmetic progressions imply that since $M$ is not a perfect square, $S$ is an infinite set and has density $\frac{1}{2}$ in the set of primes.
\end{proof}

\begin{lemma} If $\rho$ is induced from a character of $\Gamma(2)$ then $m_1 - m_2 \in \frac{1}{2} \Z\setminus\Z.$
\end{lemma}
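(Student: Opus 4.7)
The plan is to compute $\rho(T)$ explicitly using the standard matrix formula for an induced representation and then read off the eigenvalues. Since $\Gamma(2)$ is the kernel of the reduction map $\Gamma_0(2) \to \mathrm{SL}_2(\mathbb{Z}/2\mathbb{Z})$ restricted to $\Gamma_0(2)$, it has index two in $\Gamma_0(2)$; moreover $T \notin \Gamma(2)$ while $T^2 \in \Gamma(2)$, so the coset decomposition is
$$\Gamma_0(2) = \Gamma(2) \sqcup \Gamma(2)\, T,$$
and $\{I, T\}$ may be taken as a system of (right) coset representatives.

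First I would apply the standard construction of $\mathrm{Ind}_{\Gamma(2)}^{\Gamma_0(2)} \psi$ on a basis indexed by these coset representatives. For $g = T$ and coset rep $I$, the decomposition is $T \cdot I = T \cdot I$ with trivial $\Gamma(2)$-component, contributing a factor of $\psi(I) = 1$; for $g = T$ and coset rep $T$, the decomposition is $T \cdot T = I \cdot T^2$ with $\Gamma(2)$-component $T^2$, contributing the factor $\psi(T^2)$. With the appropriate ordering of basis vectors, this yields
$$\rho(T) \;=\; \begin{pmatrix} 0 & \psi(T^2) \\ 1 & 0 \end{pmatrix}.$$

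Next I would diagonalize. The characteristic polynomial is $x^2 - \psi(T^2)$, so the eigenvalues of $\rho(T)$ are $\pm \sqrt{\psi(T^2)}$. Writing $\psi(T^2) = e^{2\pi i \xi_1}$ as in the setup of \cref{thm: inducedmain}, these eigenvalues are $\pm e^{\pi i \xi_1}$. Identifying them with $e^{2\pi i m_1}$ and $e^{2\pi i m_2}$ forces
$$m_1 \equiv \tfrac{\xi_1}{2} \pmod{\mathbb{Z}}, \qquad m_2 \equiv \tfrac{\xi_1}{2} + \tfrac{1}{2} \pmod{\mathbb{Z}},$$
(up to swapping), so $m_1 - m_2 \equiv \tfrac{1}{2} \pmod{\mathbb{Z}}$, which is exactly the claim $m_1 - m_2 \in \tfrac{1}{2}\mathbb{Z}\setminus\mathbb{Z}$.

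The argument is essentially a one-line computation once the correct induction matrix is written down; the only thing to be careful about is bookkeeping of coset representatives and the convention for the induced module, which determines whether one gets the matrix above or its transpose, but either way the eigenvalues are $\pm\sqrt{\psi(T^2)}$ and the conclusion is unchanged. No serious obstacle is expected.
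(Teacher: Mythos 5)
Your proposal is correct and follows essentially the same route as the paper: the paper also computes the induced matrix of $\rho(T)$ with respect to the coset representatives $\{I, T^{-1}\}$, obtaining a matrix similar to $\left[\begin{smallmatrix} 0 & 1 \\ \psi(T^2) & 0\end{smallmatrix}\right]$, and concludes that the eigenvalues $e^{2\pi i m_1}, e^{2\pi i m_2}$ are negatives of each other (phrased there via the trace being zero rather than via the characteristic polynomial, which is an immaterial difference). Your transpose/convention caveat is harmless, exactly as you say.
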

\begin{proof} Assume that $\psi$ is a character on $\Gamma(2)$ such that $\rho = \textrm{Ind}_{\Gamma(2)}^{\Gamma_{0}(2)} \psi.$ 
Let $\widetilde{\psi}: \Gamma_{0}(2) \rightarrow \C$ be the function defined by setting $\widetilde{\psi}(g) := \psi(g)$ if $g \in \Gamma(2)$ 
and $\widetilde{\psi}(g) := 0$ if $g \not \in \Gamma(2).$ We note that $\Gamma(2)$ is an index two subgroup of $\Gamma_{0}(2)$ and $\{I, T^{-1}\}$ is a basis of left coset representatives for 
$\Gamma(2)$ in $\Gamma_{0}(2)$. Then $\rho(T)$ is similar to the matrix 
$\left[ {\begin{array}{cc}
     0 &  1 \\
\widetilde{\psi}(T^{2}) & 0 \end{array} } \right].$ Thus the trace of $\rho(T)$ is equal to zero.  As $\rho(T)$ is a diagonalizable matrix with eigenvalues $e^{2\pi i m_1}$ and $e^{2 \pi i m_2}$, we have that 
$0 = \textrm{trace}(\rho(T)) = e^{2 \pi i m_1}+ e^{2 \pi i m_2}.$ Thus $e^{2 \pi i(m_1 - m_2)} = -1.$ Hence $m_1 - m_2 \in \frac{1}{2} \Z\setminus \Z.$
\end{proof} 

\begin{thm} \label{infiniteS} Let $\rho$ denote a two-dimensional irreducible representation of $\Gamma_{0}(2)$ which is induced from a character of $\Gamma(2)$. 
 Assume that $\rho$ satisfies Assumption \ref{HYP}. Then $S$ is infinite and $S$ has density $\frac{1}{2}$ in the set of prime numbers.
Moreover, every sufficiently large prime number in $S$ divides the denominator of at least one Fourier coefficient of the first and of the second component functions of $F'.$
\end{thm}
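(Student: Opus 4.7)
The plan is to assemble the theorem directly from the three results immediately preceding it, since the heavy lifting has already been done. First I would invoke the lemma that if $\rho = \mathrm{Ind}_{\Gamma(2)}^{\Gamma_0(2)} \psi$ for some character $\psi$ of $\Gamma(2)$, then $m_1 - m_2 \in \tfrac{1}{2}\Z \setminus \Z$. This is the crucial link from the representation-theoretic hypothesis to the numerical condition on the $T$-eigenvalues.

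Next I would feed this into Lemma \ref{reciprocity}. Since $m_1 - m_2 \in \tfrac{1}{2}\Z \setminus \Z$, we have $v = 2$ and $u$ is odd, so $u \equiv -u \pmod{v}$, which gives $S = \widetilde{S}$. Combined with Assumption \ref{HYP} (which ensures $M$ is a well-defined square-free non-square, so $\Q(r) = \Q(\sqrt{M})$ is a genuine quadratic extension), Lemma \ref{reciprocity} then yields that $S = \widetilde{S}$ is infinite of density $\tfrac{1}{2}$ inside the primes, via quadratic reciprocity and Dirichlet's theorem on primes in arithmetic progressions.

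Finally, I would apply Theorem \ref{fullubd} twice. Since $S$ is infinite, for all sufficiently large $K$ with $p_K = u + Kv \in S$, the prime $p_K$ divides the denominator of the $K$-th coefficient $d(K)$ of the first component of $F'$, so every sufficiently large prime in $S$ divides the denominator of at least one Fourier coefficient of the first component. Since $\widetilde{S} = S$ is also infinite, the same theorem applied to $\widetilde{p_K} = -u + Kv$ shows every sufficiently large prime in $\widetilde{S} = S$ divides the denominator of $\widetilde{d}(K)$ for some $K$, hence divides the denominator of at least one Fourier coefficient of the second component of $F'$.

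There is no real obstacle here: the statement is genuinely a corollary of the two preceding lemmas and Theorem \ref{fullubd}. The only small subtlety worth being explicit about is verifying that the induced hypothesis is consistent with Assumption \ref{HYP} in the way needed, i.e.\ that $v = 2$ forces $S = \widetilde{S}$; this is just the observation that modulo $2$ every odd integer is congruent to its negative, so the two residue class conditions defining $S$ and $\widetilde{S}$ coincide. All remaining content is a packaging of earlier conclusions.
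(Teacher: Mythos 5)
Your argument is correct and matches the paper's proof essentially verbatim: the paper likewise combines the lemma that induction from $\Gamma(2)$ forces $m_1 - m_2 \in \tfrac{1}{2}\Z \setminus \Z$ (hence $v=2$ and $S = \widetilde{S}$), Lemma \ref{reciprocity} for the infinitude and density $\tfrac{1}{2}$, and Theorem \ref{fullubd} for the unbounded-denominator conclusion in both components. No gaps; nothing further needed.
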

\begin{proof} The hypothesis that $\rho$ is induced from a character of $\Gamma(2)$ implies $m_1 - m_2 \in \frac{1}{2} \Z \setminus \Z.$ Therefore $S = \widetilde{S}.$  Lemma \ref{reciprocity} implies that $S$ is infinite and has density one half in the set of primes. The conclusion of the theorem now follows from Theorem \ref{fullubd}. 
\end{proof}

\subsection{Unbounded Denominators: The General Case}
\nin In the previous section, we proved that if $\rho$ satisfies Assumption \ref{HYP} and if $S$ and $\widetilde{S}$ are infinite then the denominators of the Fourier coefficients of each of the component functions of $F'$ are unbounded. In this section, we will prove the analogous result for vector-valued modular forms of any weight in Theorem \ref{UBDVVMF}. 
Our method of proof follows very closely  Chris Marks' paper \cite{chrismarks}. In \cite{chrismarks}, Marks proved a similar result for three-dimensional vector-valued modular forms with respect to certain three-dimensional representations of $\Gamma.$ Our proof that the denominators of the Fourier coefficients of each of the component functions of $F'$ are unbounded uses entirely different ideas than those in \cite{chrismarks}. 

\begin{definition} Let $Z$ denote a vector-valued modular form whose component functions have Fourier coefficients which are algebraic numbers. We say that $Z$ has \textbf{bounded denominators} if 
the sequence of the denominators of the Fourier coefficients of each component function of $Z$ are bounded. If $Z$ does not have bounded denominators, we say that $Z$ has \textbf{unbounded denominators}.
\end{definition}
\begin{remark} We proved in \cref{thm: algebraicbasis} that for each integer $k$, there exists a basis for $M_{k}(\rho')$ consisting of vector-valued modular forms whose component functions have Fourier coefficients in $\overline{\Q}.$  \end{remark}

\begin{definition} Let $L$ denote a number field or $\overline{\Q}$. The notation $M_{k}(\rho')_{L}$ denotes those vector-valued modular forms in $M_{k}(\rho')$ whose component functions have all of their Fourier coefficients in $L.$  The notation $M_{k}(\Gamma_{0}(2))_{L}$ denotes those modular forms in $M_{k}(\Gamma_{0}(2))$ whose Fourier coefficients all belong to $L.$ 
We define $M(\rho)_{L} := \bigoplus_{k \in \Z} M_{k}(\rho)_{L}$ and \newline 
$M(\Gamma_{0}(2))_{L} := \bigoplus_{k \in \Z} M_{k}(\Gamma_{0}(2))_{L}.$ 
\end{definition} 

\nin We will need to use the fact that if $f \in  M(\Gamma_{0}(2))_{L}$ then there exists a positive integer $N$ such that $Nf \in \overline{\Z}[[q]].$ To do so, we shall need the following lemma. 
\begin{lemma} \label{lang}  Let $f = \sum_{n=0}^{\infty} a_n q^{n} \in M_{k}(\Gamma_{0}(2)).$ Let $r(k) = \textrm{dim } M_{k}(\Gamma_{0}(2)).$
Let $R$ denote the $\Z[\frac{1}{2}, \frac{1}{3}]$-module generated by $a_0,...,a_{r(k) -1}.$ Then all $a_n \in R$ and 
$f = \sum_{2a + 4b = k} c_{a,b}G^{a}E_4^{b}$ where each $c_{a,b} \in R.$ 
\end{lemma}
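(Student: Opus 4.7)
The plan is to bridge between the Fourier coefficients $a_n$ and the monomial coefficients $c_{a,b}$ through an intermediate basis of $M_k(\Gamma_{0}(2))$ that is simultaneously in echelon form and $\Z[\tfrac{1}{2},\tfrac{1}{3}]$-integral. I would introduce the weight $4$ form $\Delta_2 := (E_4 - G^2)/192$. Since $G, E_4 \in \Z[[q]]$ with $G = 1 + 24q + \cdots$ and $E_4 = 1 + 240q + \cdots$, the series $E_4 - G^2$ lies in $\Z[[q]]$ with leading term $192q$, and because $192 = 2^{6}\cdot 3 \in \Z[\tfrac{1}{2},\tfrac{1}{3}]^{\times}$, the form $\Delta_2$ is a cusp form whose $q$-expansion $q + O(q^2)$ has all coefficients in $\Z[\tfrac{1}{2},\tfrac{1}{3}]$. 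Because $E_4 = G^{2} + 192\,\Delta_2$ with $\Delta_2 \neq 0$, we have $\C[G, E_4] = \C[G, \Delta_2]$, and the $r(k)$ monomials $\{G^{(k-4b)/2}\Delta_2^{b} : 0 \le b \le r(k) - 1\}$ form a $\C$-basis of $M_k(\Gamma_{0}(2))$ each having $q$-expansion $q^{b} + O(q^{b+1})$ in $\Z[\tfrac{1}{2},\tfrac{1}{3}][[q]]$.

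Next I would expand $f = \sum_{b=0}^{r(k)-1} e_{b}\, G^{(k-4b)/2}\Delta_2^{b}$ and match the coefficients of $q^0, q^1, \dots, q^{r(k)-1}$ on both sides. This produces a unitriangular $\Z[\tfrac{1}{2},\tfrac{1}{3}]$-linear system for the $e_b$, so back-substitution expresses each $e_b$ as a $\Z[\tfrac{1}{2},\tfrac{1}{3}]$-linear combination of $a_0, \dots, a_{r(k)-1}$, hence $e_b \in R$. Because the monomials $G^{(k-4b)/2}\Delta_2^{b}$ themselves have all their Fourier coefficients in $\Z[\tfrac{1}{2},\tfrac{1}{3}]$ and $R$ is a $\Z[\tfrac{1}{2},\tfrac{1}{3}]$-module, the identity $f = \sum_{b} e_{b}\, G^{(k-4b)/2}\Delta_2^{b}$ forces every Fourier coefficient $a_n$ of $f$ into $R$.

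To recover the $c_{a,b}$ in the expansion $f = \sum_{2a + 4b = k} c_{a,b}\, G^{a} E_4^{b}$, I would substitute $\Delta_2 = (E_4 - G^2)/192$ back into the previous expansion and expand via the binomial theorem, rewriting each $G^{(k-4b)/2}\Delta_2^{b}$ as a $\Z[\tfrac{1}{2},\tfrac{1}{3}]$-linear combination of the monomials $G^{a'} E_4^{b'}$ with $2a' + 4b' = k$. Uniqueness of the expansion in the $G^a E_4^b$ basis then yields each $c_{a,b}$ as a $\Z[\tfrac{1}{2},\tfrac{1}{3}]$-linear combination of the $e_b$, hence $c_{a,b} \in R$. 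The only nontrivial input is the existence of $\Delta_2$ with the stated integrality; given that, the remainder is purely linear-algebraic bookkeeping with a unitriangular matrix.
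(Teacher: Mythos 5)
Your proof is correct and is essentially the argument the paper intends: the paper's proof of this lemma simply defers to Theorem $4.2$ of Lang's book, whose proof is exactly this echelon-basis bookkeeping, with $(E_4-G^2)/192$ here playing the role that $\Delta=(E_4^3-E_6^2)/1728$ plays at level one (a unit denominator in $\Z[\tfrac{1}{2},\tfrac{1}{3}]$, unitriangular change of basis, then substitution back into the $G^{a}E_4^{b}$ monomials). One immaterial slip: $(E_4-G^2)/192$ is not a cusp form, since it does not vanish at the cusp $0$ (note $G|_{2}S=-\tfrac{1}{2}+\cdots$, so the constant term of $(E_4-G^2)|_{4}S$ is nonzero); your argument only uses its $q$-expansion at $\infty$, namely $q+O(q^2)$ with coefficients in $\Z[\tfrac{1}{2},\tfrac{1}{3}]$, so nothing is affected.
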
 

 \nin The proof of this lemma is completely analogous to the proof of  Theorem $4.2$ in Lang's book on modular forms \cite{lang}. 

\begin{lemma} \label{boundedmodular} Let $L$ denote a number field or $\overline{\Q}.$ Let $M_{k}(\Gamma_{0}(2))_{L} :=  M_{k}(\Gamma_{0}(2)) \cap L[[q]].$
Then $\{G^a E_4^{b}: a,b \geq 0, 2a + 4b = k \}$ is a basis for the $L$-vector space $M_{k}(\Gamma_{0}(2))_{L}.$ If $f \in M_{k}(\Gamma_{0}(2))_{L}$ then there exists a positive integer $N$ 
such that $Nf \in \overline{\Z}[[q]].$
\end{lemma}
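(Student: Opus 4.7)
The plan is to reduce everything to the algebraic independence of $G$ and $E_4$ over $\C$ combined with the previous \cref{lang}. Since $M(\Gamma_0(2)) = \C[G, E_4]$ with $G$ and $E_4$ algebraically independent, the monomials $\{G^a E_4^b : a,b \geq 0,\ 2a+4b = k\}$ form a $\C$-basis of $M_k(\Gamma_0(2))$. In particular they are linearly independent over $L$, so the only real content is showing that they $L$-span $M_k(\Gamma_0(2))_L$.

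For the spanning claim, I would take an arbitrary $f = \sum_{n \geq 0} a_n q^n \in M_k(\Gamma_0(2))_L$, so every $a_n \in L$. Because the monomials are a $\C$-basis, there is a unique expansion $f = \sum_{2a+4b=k} c_{a,b} G^a E_4^b$ with $c_{a,b} \in \C$. Now I invoke \cref{lang}: each $c_{a,b}$ belongs to the $\Z[\tfrac{1}{2}, \tfrac{1}{3}]$-module generated by the first $r(k)$ Fourier coefficients $a_0, \ldots, a_{r(k)-1}$ of $f$. Since $L$ is a field containing $\Q$ (either a number field or $\overline{\Q}$), we have $\Z[\tfrac{1}{2},\tfrac{1}{3}] \subset L$, and therefore each $c_{a,b}$ lies in $L$. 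This gives the basis statement.

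For the denominator statement, I would now use the expansion $f = \sum c_{a,b} G^a E_4^b$ with $c_{a,b} \in L \subseteq \overline{\Q}$. Each algebraic number $c_{a,b}$ admits a positive integer denominator $N_{a,b}$ with $N_{a,b} c_{a,b} \in \overline{\Z}$. Taking $N$ to be the product (or least common multiple) of the finitely many $N_{a,b}$, we get $N c_{a,b} \in \overline{\Z}$ for every pair $(a,b)$. Since $G, E_4 \in \Z[[q]]$, each monomial $G^a E_4^b$ lies in $\Z[[q]] \subseteq \overline{\Z}[[q]]$, and hence $Nf = \sum (N c_{a,b}) G^a E_4^b \in \overline{\Z}[[q]]$, as required.

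There is no real obstacle here; the entire argument is a formal consequence of \cref{lang}, whose proof is the nontrivial input and is deferred to Lang's book. The only subtlety to verify is simply that $L$ contains $\Z[\tfrac{1}{2}, \tfrac{1}{3}]$, which is immediate from the hypothesis that $L$ is a number field or $\overline{\Q}$.
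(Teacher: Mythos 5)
Your proposal is correct and is essentially the same argument as the paper's: both deduce the $L$-basis claim from Lemma \ref{lang} (coefficients $c_{a,b}$ lie in the $\Z[\tfrac12,\tfrac13]$-module generated by the initial Fourier coefficients, hence in $L$), and both clear denominators by taking a product of the finitely many integers $N_{a,b}$ and using $G, E_4 \in \Z[[q]]$. The only difference is that you spell out the linear independence via the algebraic independence of $G$ and $E_4$, which the paper leaves implicit.
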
 
\begin{proof} The fact that $\{G^a E_4^{b}: a,b \geq 0, 2a + 4b = k \}$ is a basis for the $L$-vector space $M_{k}(\Gamma_{0}(2))_{L}$ is immediate from Lemma \ref{lang}. Let  $f \in M_{k}(\Gamma_{0}(2))_{L}.$  \newline
\nin Then $f = \sum_{2a + 4b = k} c_{a,b} G^a E_4^{b}$  where each $c_{a,b} \in \C.$ Lemma \ref{lang} and the fact that $f \in M_{k}(\Gamma_{0}(2))_{L}$ imply that each $c_{a,b} \in L.$ Therefore there exists a positive integer $N_{a,b}$ such that 
$N_{a,b} c_{a,b} \in \overline{\Z}.$ Let $N = \prod_{2a + 4b = k} N_{a,b}.$ Then each $N c_{a,b} \in \overline{\Z}$ and $Nf = \sum_{2a + 4b = k} Nc_{a,b} G^a E_4^{b} \in \overline{\Z}[[q]].$
\end{proof} 

\nin We shall need to compute the $q$-series expansion of $D_{k_0}F'$ in this section and we do so in the following Lemma. 
\begin{lemma} \label{t1} Let $t_{1}(K)  :=  d(K)(K + A -r) + \sum_{n=1}^{K-1} 2k_0 \sigma(n)d(K -n)$ and let $t_{2}(K) :=  \widetilde{d}(K)(K + B -r) + \sum_{n=1}^{K-1} 2 k_{0} \sigma(n)\widetilde{d}(K-n).$
Then $$D_{k_0}F' =  \left[ \begin{matrix}  q^{\frac{k_0}{12} + A -r}(A-r  + \sum_{K=1}^{\infty} t_1(K)q^{K})  \\  q^{\frac{k_0}{12} + B -r}(B -r + \sum_{K=1}^{\infty} t_{2}(K) q^{K}) \end{matrix} \right].$$ Moreover, for all integers $K$, $t_{1}(K), t_{2}(K) \in L.$ In particular, $D_{k_0}F' \in M_{k_0+2}(\rho')_{L}.$
\end{lemma}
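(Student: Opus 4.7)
The plan is to apply the modular derivative
\[
D_{k_0}(G) = q\frac{dG}{dq} - \frac{k_0}{12}E_2 G
\]
componentwise to $F'$ and extract the Fourier coefficients. Setting $\lambda := \frac{k_0}{12} + A - r$, the first component is $F'_1 = q^\lambda\bigl(1 + \sum_{K \geq 1} d(K) q^K\bigr)$, so term-by-term differentiation gives
\[
q\,\frac{dF'_1}{dq} \;=\; q^\lambda\left(\lambda + \sum_{K \geq 1}(K + \lambda)\,d(K)\,q^K\right).
\]
Multiplying $F'_1$ by $E_2 = 1 - 24\sum_{n \geq 1}\sigma(n)q^n$ and reading off the coefficient of $q^{K+\lambda}$ is a Cauchy convolution yielding $d(K) - 24\sum_{n=1}^{K}\sigma(n)d(K-n)$ (with $d(0):=1$). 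Combining these two expansions and using $\lambda - \tfrac{k_0}{12} = A - r$ cancels the $\tfrac{k_0}{12}$ contribution from the differentiated term, giving
\[
D_{k_0}F'_1 \;=\; q^\lambda\left((A - r) + \sum_{K \geq 1}\Big[d(K)(K+A-r) + 2k_0\sum_{n=1}^{K}\sigma(n)d(K-n)\Big]q^K\right),
\]
which is exactly the first row of the claimed formula with the stated $t_1(K)$. The identical computation, with $B$ in place of $A$ and $\widetilde{d}$ in place of $d$, yields the second row.

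For the integrality claim, the standing assumption that $\rho(T)$ has finite order forces $l_1 = A - r \in \Q$ and $l_2 = B - r \in \Q$, so the prefactors $K + A - r$ and $K + B - r$ lie in $\Q \subseteq L$ for every $K \in \Z$. Combined with $d(K), \widetilde{d}(K) \in L$ (guaranteed by \cref{thm: algebraicbasis} whenever $L \supseteq \Q(r)$) and the fact that $2k_0\sigma(n) \in \Z \subseteq L$, each $t_i(K)$ is a finite $L$-linear combination of elements of $L$ and hence lies in $L$.

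Finally, since $D_{k_0}$ maps $M_{k_0}(\rho') \to M_{k_0+2}(\rho')$, we have $D_{k_0}F' \in M_{k_0+2}(\rho')$; combined with the fact that the Fourier coefficients of both components are in $L$, this yields $D_{k_0}F' \in M_{k_0+2}(\rho')_L$. The entire argument is a routine unwinding of the definitions; the only care needed is in tracking the index range of the Cauchy convolution so that the $\sigma(K)d(0)$ endpoint term is correctly absorbed into $t_i(K)$, so I do not anticipate any serious obstacle.
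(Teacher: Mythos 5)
Your proof is correct and takes essentially the same route as the paper: apply $D_{k_0} = \theta - \tfrac{k_0}{12}E_2$ componentwise to $F'$, expand $\theta$ and the $E_2$-product as Cauchy convolutions, collect the coefficient of $q^{K+\lambda}$, and then deduce $L$-rationality from $A-r,\,B-r \in \Q$, $d(K), \widetilde{d}(K) \in L$, and $2k_0\sigma(n) \in \Z$. Your attention to the convolution endpoint is in fact warranted: the coefficient genuinely contains the $n=K$ term $2k_0\sigma(K)d(0)$ (with $d(0)=\widetilde{d}(0)=1$), so the sums defining $t_1(K)$ and $t_2(K)$ should run to $n=K$; the upper limit $K-1$ appearing in the statement and in the paper's own computation is an off-by-one slip, and your version is the accurate one.
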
 
\begin{proof} 
\nin We recall that $E_2 = 1 - 24 \sum_{n=1}^{\infty} \sigma(n)q^{n}.$ Let $F_1'$ and $F_2'$ denote the first and second component functions of $F'.$
We have that \begin{align*} D_{k_0}(F_1') & = D_{k_0}\left(q^{\frac{k_0}{12} + A - r}(1 + \sum_{K=1}^{\infty} d(K)q^{K})\right) \\
& =  \theta\left(q^{\frac{k_0}{12} + A - r}(1 + \sum_{K=1}^{\infty} d(K)q^{K})\right) \\ 
& \quad \quad - \frac{k_0}{12}E_2 \left(q^{\frac{k_0}{12} + A - r}(1 + \sum_{K=1}^{\infty} d(K)q^{K})\right) \\
& = q^{\frac{k_0}{12} + A-r} \theta \left(1 + \sum_{K=1}^{\infty} d(K)q^{K} \right) + \left(1 + \sum_{K=1}^{\infty} d(K)q^{K}\right) \theta (q^{\frac{k_0}{12} + A - r}) \\
& \quad \quad -  \frac{k_0}{12}(1 - 24\sum_{n=1}^{\infty} \sigma(n)q^{n}) \left(q^{\frac{k_0}{12} + A - r}(1 + \sum_{K=1}^{\infty} d(K)q^{K})\right) \\
& =  q^{\frac{k_0}{12} + A-r} (\sum_{K=1}^{\infty} K d(K) q^{K}) + (\frac{k_0}{12} + A -r) q^{\frac{k_0}{12} + A -r} \left(1 + \sum_{K=1}^{\infty} d(K)q^{K}\right) \\
& \quad \quad - \frac{k_0}{12} q^{\frac{k_0}{12} + A - r}(1 + \sum_{K=1}^{\infty} (d(K) - \sum_{n=1}^{K-1} 24 \sigma(n)d(K-n)) q^{K}) \\
& = q^{\frac{k_0}{12} + A - r}(A - r + \sum_{K =1}^{\infty} t_{1}(K) q^{K}) \end{align*} 
\nin In a similar manner, we have that $D_{k_0}F_2' = q^{\frac{k_0}{12} + B -r}(B -r + \sum_{K=1}^{\infty} t_{2}(K)q^{K}).$ \\
\nin The assumption that $\rho(T)$ has finite order implies that $A - r \in \Q$ and $B -r \in \Q.$ We have previously shown that for all integers $K \geq 1$, 
$d(K) \in L$ and  $\widetilde{d}(K) \in L$. It now follows from the formulas for $t_1(K)$ and $t_{2}(K)$ that for each integer $K \geq 1$, $t_{1}(K) \in L$ and $t_{2}(K) \in L.$ Hence all of the Fourier coefficients of both of the component functions of $D_{k_0}F'$ belong to $L.$
\end{proof} 

\begin{lemma} \label{freemoduleL}  Assume that $\rho$ satisfies Assumption \ref{HYP}. Let $M$ denote the square-free integer for which $\Q(\sqrt{M}) = \Q(r)$ and let $L = \Q(\sqrt{M}).$ \\
\nin Then $M(\rho')_{L} = M(\Gamma_{0}(2))_{L} F' \bigoplus M(\Gamma_{0}(2))_{L} D_{k_0} F'.$ In particular, $M(\rho')_{L}$ is a free $M(\Gamma_{0}(2))_{L}$-module of rank two. 
\end{lemma}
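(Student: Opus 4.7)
My plan is to prove the two inclusions separately and then deduce directness and freeness from \cref{thm: basis}. The containment $M(\Gamma_{0}(2))_{L} F' \oplus M(\Gamma_{0}(2))_{L} D_{k_0} F' \subseteq M(\rho')_{L}$ is immediate: \cref{thm: algebraicbasis} gives $F' \in M_{k_0}(\rho')_L$ (since $\Q(r) = L$), Lemma \ref{t1} gives $D_{k_0} F' \in M_{k_0+2}(\rho')_L$, and multiplication by $G$ and $E_4$, which have integer Fourier coefficients, preserves the property of Fourier coefficients lying in $L$.

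For the reverse inclusion, let $Z \in M_k(\rho')_L$. Since $\rho'$ is conjugate to the irreducible $\rho$, it is itself irreducible, so \cref{thm: basis} applied to $\rho'$ produces unique $\alpha \in M_{k-k_0}(\Gamma_0(2))$ and $\beta \in M_{k-k_0-2}(\Gamma_0(2))$ with $Z = \alpha F' + \beta D_{k_0} F'$, and my task is to show that $\alpha, \beta \in M(\Gamma_0(2))_L$. Denoting the components of $F'$, $D_{k_0} F'$, and $Z$ by $f_i$, $g_i$, and $Z_i$ for $i = 1, 2$, this identity becomes the $2 \times 2$ linear system $\alpha f_i + \beta g_i = Z_i$, whose determinant is the Wronskian $W := f_1 g_2 - f_2 g_1$. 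From the explicit $q$-expansion of $F'$ together with Lemma \ref{t1}, the leading terms are $f_i \sim q^{k_0/12 + l_i}$ and $g_i \sim l_i\, q^{k_0/12 + l_i}$ with $l_1 = A - r$ and $l_2 = B - r$, giving
$$W = (l_2 - l_1)\, q^{k_0/6 + l_1 + l_2}\bigl(1 + O(q)\bigr),$$
where the $O(q)$ coefficients lie in $L$. Since $A - B = l_1 - l_2 \notin \Z$ and $l_1, l_2 \in \Q \subseteq L$, the leading coefficient $l_2 - l_1$ is a nonzero element of $L$. Because $\rho(T)$ has finite order, all relevant exponents are rational, so the series $f_i$, $g_i$, $Z_i$ all lie in $L((q^{1/N}))$ for some positive integer $N$, and $W$ is a unit in this ring. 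Cramer's rule then yields $\alpha = (Z_1 g_2 - Z_2 g_1)\, W^{-1} \in L((q^{1/N}))$ and $\beta = (Z_2 f_1 - Z_1 f_2)\, W^{-1} \in L((q^{1/N}))$.

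On the other hand, $\alpha$ and $\beta$ are modular forms on $\Gamma_0(2)$, so their Fourier expansions involve only non-negative integer powers of $q$. The identity $\C[[q]] \cap L((q^{1/N})) = L[[q]]$ therefore forces $\alpha, \beta \in L[[q]]$, hence $\alpha \in M_{k-k_0}(\Gamma_0(2))_L$ and $\beta \in M_{k-k_0-2}(\Gamma_0(2))_L$, as required. The sum is direct and the module is free of rank two because $F'$ and $D_{k_0} F'$ are $M(\Gamma_0(2))$-linearly independent by \cref{thm: basis}, hence \emph{a fortiori} $M(\Gamma_0(2))_L$-linearly independent. The main subtlety is obtaining the leading term of $W$ in a form whose leading coefficient is visibly a nonzero element of $L$: this rests on the distinctness of the eigenvalues of $\rho(T)$, already established in Section \ref{hypergeometric} from the irreducibility of $\rho$. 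The rest of the argument is routine linear algebra in a Laurent series ring with fractional exponents.
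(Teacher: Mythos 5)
Your argument is correct, but it packages the key step differently from the paper. The paper, following Marks, writes $Z = m_1 F' + m_2 D_{k_0}F'$ and proves $m_1, m_2 \in M(\Gamma_0(2))_L$ by induction on the Fourier coefficients: at each stage $N$ one solves a $2\times 2$ linear system over $\Q$ whose matrix is the constant matrix $\left[\begin{smallmatrix} 1 & A-r \\ 1 & B-r \end{smallmatrix}\right]$, invertible because $B-A \notin \Z$, and the lower-order terms are handled by the inductive hypothesis together with $d(K), \widetilde{d}(K), t_1(K), t_2(K) \in L$. You instead invert the full matrix of series at once: Cramer's rule with $W = f_1 g_2 - f_2 g_1$, whose leading coefficient $l_2 - l_1 = B - A \neq 0$ lies in $L$ and whose exponents are rational since $\rho(T)$ has finite order, so $W$ is a unit in $L((q^{1/N}))$; then the observation that $\alpha,\beta$ are honest modular forms, hence lie in $\C[[q]] \cap L((q^{1/N})) = L[[q]]$, finishes the descent of the field of coefficients. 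The two proofs rest on exactly the same nonvanishing input (the distinctness of $l_1$ and $l_2$, i.e.\ the irreducibility of $\rho$ via the distinct eigenvalues of $\rho(T)$), and your coefficient-comparison identity is just the paper's induction carried out wholesale inside a Laurent series ring with fractional exponents. What your route buys is brevity and a cleaner conceptual statement (invertibility of the Wronskian matrix over $L((q^{1/N}))$ rather than a bookkeeping induction); what the paper's route buys is that it never needs to introduce the auxiliary ring $L((q^{1/N}))$ or justify the intersection identity, everything staying at the level of explicit recursions among coefficients already computed in Lemma \ref{t1}. One small point of care in your version: the intersection $\C[[q]] \cap L((q^{1/N})) = L[[q]]$ should be read inside $\C((q^{1/N}))$ via uniqueness of $q^{1/N}$-expansions, which is legitimate here because all the series involved are genuine expansions of the same holomorphic functions.
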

 \begin{proof} This proof follows the proof of Lemma $4.1$ in Marks' paper \cite{chrismarks}. 
 We have shown in \cref{thm: algebraicbasis} that $F' \in M_{k_0}(\rho')_{L}$ and $D_{k_0}F' \in M_{k_0 +2}(\rho')_{L}.$ \newline 
 \nin Hence $M(\Gamma_{0}(2))_{L} F' \bigoplus M(\Gamma_{0}(2))_{L} D_{k_0} F' \subset M(\rho')_{L}.$ To prove the theorem, we need to show that
 the reverse inclusion holds.  We recall that $M(\rho')_{L} := \bigoplus_{k \in \Z} M_{k}(\rho')_{L}.$ Thus it suffices to prove that if $k \in \Z$ then 
 $M_{k}(\rho')_{L} \subset M(\Gamma_{0}(2))_{L} F' \bigoplus M(\Gamma_{0}(2))_{L} D_{k_0} F'.$ Let $Z \in M_{k}(\rho')_{L}.$ Let $Z_1$ and $Z_2$ denote the first and second component functions of $Z$ and let $F_1'$ and $F_2'$ denote the first and second component functions of $F'.$ We know that \newline
\nin  $M(\rho') = M(\Gamma_{0}(2)) F' \bigoplus M(\Gamma_{0}(2)) D_{k_0} F'.$ Therefore $Z = m_1 F' + m_2 D_{k_0} F'$ where 
\newline \nin $m_1 \in  M_{k - k_0}(\Gamma_{0}(2)), m_2 \in M_{k - k_0 - 2}(\Gamma_{0}(2)).$ It suffices to show that $m_1 \in  M_{k - k_0}(\Gamma_{0}(2))_{L}$ and $m_2 \in M_{k - k_0 - 2}(\Gamma_{0}(2))_{L}.$  We have previously shown that  $$F' = \left[\begin{matrix} F_1' \\ F_2' \end{matrix} \right] =  \left[ \begin{matrix}  q^{\frac{k_0}{12} + A -r}(1 + \sum_{K=1}^{\infty} d(K)q^{K})  \\  q^{\frac{k_0}{12} + B -r}(1 + \sum_{K=1}^{\infty} \widetilde{d}(K)q^{K}) \end{matrix} \right]$$ and 
 $$D_{k_0}(F') = \left[\begin{matrix} D_{k_0} F_1' \\ D_{k_0}F_2' \end{matrix} \right] =  \left[ \begin{matrix}  q^{\frac{k_0}{12} + A -r}(A-r  + \sum_{K=1}^{\infty} t_1(K)q^{K})  \\  q^{\frac{k_0}{12} + B -r}(B -r + \sum_{K=1}^{\infty} t_{2}(K) q^{K}) \end{matrix} \right].$$
 
\nin  We write $m_1 = \sum_{n=0}^{\infty} m_1(n)q^{n}$ and $m_2 = \sum_{n=0}^{\infty} m_2(n)q^{n}.$ The equation $Z = m_1 F' + m_2 D_{k_0}F'$ implies that there exist sequences $\{Z_1(n)\}_{n=0}^{\infty}$ and $\{Z_2(n)\}_{n=0}^{\infty}$ such that 
$$Z = \left[\begin{matrix} Z_1 \\ Z_2 \end{matrix} \right] = \left[\begin{matrix}  q^{\frac{k_0}{12} + A -r }\sum_{n=0}^{\infty} Z_1(n)q^{n}\\ 
q^{\frac{k_0}{12} + B -r} \sum_{n=0}^{\infty} Z_{2}(n) q^{n} \end{matrix} \right].$$
In fact, \begin{align*} Z_1 &= q^{\frac{k_0}{12} + A -r} \sum_{n=0}^{\infty} Z_1(n)q^{n} \\
&= m_1 F_1' + m_2 D_{k_0} F_1 ' \\
&= q^{\frac{k_0}{12} + A - r} \sum_{n=0}^{\infty} m_1(n)q^{n}(1 + \sum_{K=1}^{\infty} d(K)q^{K}) 
+ q^{\frac{k_0}{12} + A - r} \sum_{n=0}^{\infty} m_{2}(n)q^{n}(A - r + \sum_{K=1}^{\infty} t_{1}(K)q^{K}). \end{align*} 
Thus $Z_1(N) = m_1(N) + \sum_{n=0}^{N-1}m_{1}(n) d(N- n) + (A-r)m_2(N)+ \sum_{n=0}^{N-1} m_{2}(n)t_{1}(N -n).$
Similarly, $Z_2(N) = m_1(N) + \sum_{n=0}^{N-1}m_1(n) \widetilde{d}(N-n) + (B-r)m_2(N) + \sum_{n=0}^{N-1} m_{2}(n) t_{2}(N- n).$
Hence $\left[ \begin{matrix} Z_1(0) \\ Z_2(0) \end{matrix} \right] = \left[\begin{matrix} 1 & A -r \\ 1 & B -r \end{matrix} \right] \left[\begin{matrix} m_1(0) \\ m_2(0) \end{matrix} \right]$ and for all $N \geq 1$, we have that
$$\left[ \begin{matrix} Z_1(N) \\ Z_2(N) \end{matrix} \right] = \left[\begin{matrix} 1 & A -r \\ 1 & B -r \end{matrix} \right] \left[\begin{matrix} m_1(N) \\ m_2(N) \end{matrix} \right] + \sum_{n=0}^{N-1} \left[\begin{matrix} m_1(n)d(N-n) + m_2(n) t_1(N -n) \\ m_1(n) \widetilde{d}(N -n) + m_2(n) t_2(N - n) \end{matrix} \right].$$
To show that $m_1, m_2 \in M(\Gamma_{0}(2))_{L}$, we must show that for all nonnegative integers $N$, \newline 
\nin $m_1(N) \in L$ and $m_2(N) \in L.$ We proceed by induction on $N$. Our inductive hypothesis is that for all nonnegative integers $n < N$, $m_1(n), m_2(n) \in L.$  We recall that because $\rho$ is irreducible, $B - A \not \in \Z$ and thus $B - A \neq 0.$
Hence the matrix $\left[\begin{matrix} 1 & A -r \\ 1 & B -r \end{matrix} \right]$ is invertible. The assumption that $\rho(T)$ has finite order implies that $A - r, B -r \in \Q.$ Thus  $$\left[\begin{matrix} 1 & A -r \\ 1 & B -r \end{matrix} \right]^{-1} = \frac{1}{B-A} \left[\begin{matrix} B-r & r - A   \\  -1 & 1 \end{matrix} \right] \in \textrm{GL}_{2}(\Q).$$ 
\nin The assumption that $Z \in M(\rho)_{L}$ implies that for all integers $n \geq 0$, $Z_1(n), Z_2(n) \in L.$ We have previously shown that $F' \in M(\rho)_{L}$ and that for all integers $K \geq 1$, $d(K), \widetilde{d}(K) \in L$
We also proved in Lemma \ref{t1} that for all integers $K$, $t_{1}(K), t_{2}(K) \in L.$ We now treat the base case where $N = 0.$
We have that  $$\left[ \begin{matrix} m_1(0) \\ m_2(0) \end{matrix} \right] = \frac{1}{B -A} \left[\begin{matrix} B-r & r - A \\ -1 & 1 \end{matrix} \right] \left[\begin{matrix} Z_1(0) \\ Z_2(0) \end{matrix} \right].$$ 
\nin Hence $m_1(0), m_2(0) \in L$ since $Z_1(0), Z_2(0) \in L$, $B -r, r - A, B - A \in \Q.$ Let $N$ denote a positive integer. Assume that for all nonnegative integers $n$ with $n < N$, $m_1(n), m_2(n) \in L.$ \\
\nin Then $$\left[\begin{matrix} 1 & A -r \\ 1 & B -r \end{matrix} \right] \left[\begin{matrix} m_1(N) \\ m_2(N) \end{matrix} \right] =   \left[ \begin{matrix} Z_1(N) \\ Z_2(N) \end{matrix} \right] - \sum_{n=0}^{N-1} \left[\begin{matrix} m_1(n)d(N-n) + m_2(n) t_1(N -n) \\ m_1(n) \widetilde{d}(N -n) + m_2(n) t_2(N - n) \end{matrix} \right]  \in L^{2}.$$ 
Thus $m_1(N), m_2(N) \in L$ since $\left[\begin{matrix} 1 & A -r \\ 1 & B -r \end{matrix} \right] \in \textrm{GL}_{2}(\Q).$ This completes the inductive step and our proof is now complete. 
\end{proof}

 \begin{thm}  \label{UBDVVMF} Let $\rho$ denote a two-dimensional irreducible representation of $\Gamma_{0}(2)$ for which $\rho(T)$ has finite order, $c \in \Q$
 and $[\Q(r): \Q] = 2.$ Let $k \in \Z$ and let $Z$ denote a nonzero element in $M_{k}(\rho')_{L}.$ Let $Z_1$ and $Z_2$ denote the first and second component functions of $Z$.  If $S$ is infinite then any sufficiently large prime in $S$ divides the denominator of at least one Fourier coefficient of $Z_1.$ In particular, if $S$ is infinite then the sequence of the denominators of the Fourier coefficients of $Z_1$ is unbounded. If $\widetilde{S}$ is infinite then any sufficiently large prime in $\widetilde{S}$ divides the denominator of some Fourier coefficient of $Z_2.$ In particular, if $\widetilde{S}$ is infinite then the sequence of the denominators of the Fourier coefficients of $Z_2$ is unbounded.   \end{thm}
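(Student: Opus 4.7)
The plan is to use the basis $\{F', D_{k_0}F'\}$ from Lemma \ref{freemoduleL} to reduce the general statement to the behaviour of $F'$ established in Theorem \ref{fullubd}. First, write $Z = m_1 F' + m_2 D_{k_0}F'$ uniquely with $m_1, m_2 \in M(\Gamma_0(2))_L$. By Lemma \ref{boundedmodular} there is a single positive integer $N^*$ such that $N^* m_1, N^* m_2 \in \overline{\mathbb{Z}}[[q]]$; so for every prime $p > N^*$, all Fourier coefficients of $m_1$ and $m_2$ are $p$-integral. Let $N_0^{(i)} := \min\{n \geq 0 : m_i(n) \neq 0\}$ (with the convention $+\infty$ when $m_i = 0$), and $N_{\min} := \min(N_0^{(1)}, N_0^{(2)}) < \infty$ since $Z \neq 0$.

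For a large prime $p_K \in S$, I would examine the Fourier coefficient $Z_1^{(N_{\min}+K)}$. Expanding $Z_1 = m_1 F_1' + m_2 D_{k_0}F_1'$ via the $q$-expansions in Lemma \ref{t1}, this coefficient equals
\[ m_1(N_{\min}+K) + l_1\, m_2(N_{\min}+K) + \sum_{n=0}^{N_{\min}+K-1}\bigl[m_1(n)\,d(N_{\min}+K-n) + m_2(n)\,t_1(N_{\min}+K-n)\bigr]. \]
Theorem \ref{fullubd} makes $d(j)$ $p_K$-integral for $j < K$, and then the defining formula for $t_1(j)$ in Lemma \ref{t1} together with $p_K$-integrality of $(j+l_1)$ forces $t_1(j)$ to be $p_K$-integral for $j < K$ as well; meanwhile $m_i(n) = 0$ for $n < N_0^{(i)}$, so every term with $n \neq N_{\min}$ is $p_K$-integral. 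Substituting the identity $t_1(K) = (K+l_1)d(K) + (\text{$p_K$-integral})$ into the surviving $n = N_{\min}$ contribution yields $Z_1^{(N_{\min}+K)} \equiv d(K)\bigl[m_1(N_{\min}) + m_2(N_{\min})(K+l_1)\bigr]$ modulo $p_K$-integral terms. Since $p_K = u + Kv$ gives $K \equiv -u/v \pmod{p_K}$, the bracket reduces to $m_2(N_{\min})(\gamma - u/v)$ with $\gamma := l_1 + m_1(N_{\min})/m_2(N_{\min}) \in L$ (if $m_2(N_{\min}) = 0$ the bracket is just $m_1(N_{\min}) \neq 0$). Provided $\gamma \neq u/v$, this is a fixed nonzero algebraic number, hence a $p_K$-unit for all but finitely many $K$, and the single $p_K$ in the denominator of $d(K)$ survives in $Z_1^{(N_{\min}+K)}$.

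The main obstacle is the degenerate case $\gamma = u/v$---most naturally occurring when $m_1 \equiv 0$ and $l_1 = u/v$, i.e.\ $l_2 = 0$ and $B = r$. Here the bracket vanishes modulo $p_K$, the coefficient $Z_1^{(N_{\min}+K)}$ is $p_K$-integral, and one must look elsewhere. I expect to resolve this by turning instead to $Z_1^{(N_{\min}+K+1)}$, whose dominant contribution is $m_2(N_{\min})\,t_1(K+1)$. The key point is that in this degenerate regime, tracing through the formula for $h(K+1)$ in Theorem \ref{sequence} shows that $d(K+1)$ itself retains a single $p_K$ in its denominator (inherited from the joint contribution of $f(K+1)$ and $f(K)\,D(K+1,K)$), and then the expansion $t_1(K+1) = (K+1+l_1)\,d(K+1) + 2k_0\,d(K) + (\text{$p_K$-integral})$ forces $t_1(K+1)$ to carry $p_K$ in its denominator as well; a mild nondegeneracy condition on the relevant reduction modulo $p_K$---failing for only finitely many primes---prevents cancellation and so $p_K$ divides the denominator of $Z_1^{(N_{\min}+K+1)}$. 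The claim for $Z_2$ and $\widetilde S$ is proved by the symmetric argument using $\widetilde d, t_2, l_2, B$, and $\widetilde p_K$ in place of $d, t_1, l_1, A$, and $p_K$.
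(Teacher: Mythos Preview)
Your main line of argument is sound and different from the paper's: in the generic situation (your Cases A, B1, B2 in the obvious trichotomy) the bracket $m_1(N_{\min})+m_2(N_{\min})(K+l_1)$ is indeed a $p_K$-unit for all large $K$, and since $v_{\mathfrak p}(d(K))=-1$ exactly, the coefficient $Z_1^{(N_{\min}+K)}$ is non-$p_K$-integral. The quadratic-nonresidue condition on $p_K\in S$ is precisely what makes Case B1 work, via the same mechanism as Lemma~\ref{quadratic}.

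The genuine gap is the degenerate case $\gamma=u/v$. This case really occurs: for instance $Z=-l_2\,G\,F'+D_{k_0}F'\in M_{k_0+2}(\rho')_L$ has $m_1(0)=-l_2$, $m_2(0)=1$, so $\gamma=l_1-l_2=u/v$. Your proposed fix via $Z_1^{(N_{\min}+K+1)}$ is not under control. That coefficient receives contributions from \emph{both} $d(K)$ and $d(K+1)$ (and from $m_i(N_{\min}+1)$ as well), each potentially with $v_{\mathfrak p}=-1$, and you have not established the $p_K$-valuation of $d(K+1)$: tracing Theorem~\ref{sequence} shows $f(K+1)\equiv g(K+1,0)+g(K,1)$ modulo $p_K$-integers, two terms each with a single $p_K$ in the denominator which may or may not cancel; further cancellations with the $d(K)$-terms and with the $m_i(N_{\min}+1)$-terms are possible. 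The ``mild nondegeneracy condition'' you invoke is not a fixed algebraic number but a $K$-dependent expression, so the claim that it fails for only finitely many primes needs proof, and if it fails you may be forced to $K+2$, $K+3,\dots$ with no termination argument.

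The paper avoids this entirely by a different and more robust device (following Marks): it also expands $D_kZ=m_3F'+m_4D_{k_0}F'$ and uses the adjugate identity
\[
(m_1m_4-m_2m_3)\,F_1' \;=\; m_4\,Z_1 - m_2\,D_kZ_1 .
\]
One checks $m_1m_4-m_2m_3\neq 0$ (else $Z_1,Z_2$ would satisfy the same first-order ODE and be proportional, contradicting Lemma~\ref{irred}); its leading coefficient $\kappa$ is then a fixed nonzero algebraic number, hence a $p_K$-unit for large $p_K$. If $Z_1$ were $p_K$-integral then so would be $D_kZ_1$ and hence the right side, while the left side has the non-$p_K$-integral $d(K)$ multiplied by $\kappa$, a contradiction. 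This argument is uniform in $Z$ and sidesteps the degenerate case completely. Your approach, if the degenerate case could be closed, would give slightly more precise information (a specific bad coefficient), but as it stands the degenerate case is a real hole.
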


\begin{proof} This proof follows closely the proof of Proposition $4.3$ in Marks' paper \cite{chrismarks}.
Let $Z$ denote a nonzero element in $M_{k}(\rho')_{L}.$ Then $D_{k}Z \in M_{k+2}(\rho')_{L}.$ We have proven in Lemma \ref{freemoduleL}
 that $$M(\rho')_{L} = M(\Gamma_{0}(2))_{L}  F' \bigoplus M(\Gamma_{0}(2))_{L} D_{k_0}F'.$$
Therefore there exist $m_1, m_4 \in M_{k - k_0}(\Gamma_{0}(2))_{L}$, $m_2 \in M_{k-k_0 -2} (\Gamma_{0}(2))_{L}$, and \newline  $m_3 \in M_{k +2 - k_0}(\Gamma_{0}(2))_{L}$ such that 
$$ \left[ \begin{matrix} Z \\ D_{k}Z \end{matrix} \right] = \left[ \begin{matrix} m_1 & m_2 \\ m_3 & m_4 \end{matrix} \right] \left[ \begin{matrix} F' \\ D_{k_0}F' \end{matrix} \right].$$
We note that $m_1m_4 - m_2 m_3 \in M_{2k - 2k_0}(\Gamma_{0}(2))_{L}.$ \\
\nin We have that:
\begin{align*} \left[ \begin{matrix} m_4Z - m_2 D_{k}Z \\ -m_3 Z + m_1 D_{k}Z \end{matrix} \right] 
& = \left[ \begin{matrix} m_4 & -m_2 \\  -m_3 & m_1 \end{matrix} \right] \left[ \begin{matrix} Z \\ D_{k}Z \end{matrix} \right]  \\
& =  \left[ \begin{matrix} m_4 & -m_2 \\  -m_3 & m_1 \end{matrix} \right] \ \left[ \begin{matrix} m_1 & m_2 \\ m_3 & m_4 \end{matrix} \right] \left[ \begin{matrix} F' \\ D_{k_0}F' \end{matrix} \right]  \\
& = \left[ \begin{matrix}(m_1m_4 - m_2 m_3)F' \\ (m_1m_4 - m_2 m_3) D_{k_0}F'  \end{matrix} \right]. \end{align*}

\nin Thus  $m_4Z - m_2 D_{k}Z = (m_1m_4 - m_2m_3)F'.$ We recall that $Z_1$ and $Z_2$ denote the first and second component functions of $Z$ and $F_1'$ and $F_2'$ denote the first and second component functions of $F'.$ Thus $m_4Z_1 - m_2 D_{k}Z_1 = (m_1m_4 - m_2m_3)F_1'$ and $m_4Z_2 - m_2 D_{k}Z_2 = (m_1m_4 - m_2 m_3)F_2'.$ Let $N$ denote a positive integer for which $Nm_1, Nm_2, Nm_3, Nm_4 \in \overline{\Z}[[q]].$ Then $N^2(m_1m_4 - m_2 m_3) \in \overline{\Z}[[q]]$ and $N^{2}(m_4 Z_1 - m_2 D_{k} Z_1) = N^{2}(m_1 m_4 - m_2 m_3)F_1'.$ \\

\nin Let $p$ denote a prime number in $S$ for which $p > N$, $p \nmid 12$ and $p$ divides the denominator of some Fourier coefficient of $F_1'.$ We remark that any sufficiently large prime in $S$ has this property. Suppose by way of contradiction that $p$ does not divide the denominator of any Fourier coefficient of $Z_1.$
Then $p$ does not divide the denominator of any Fourier coefficient of  $D_{k}Z_1 = q \frac{d}{dq}(Z_1) - \frac{k}{12}E_2 Z_1$ since $E_2 \in \Z[[q]].$ 
Therefore $p$ does not divide the denominator of any Fourier coefficient of  $N^{2} m_4 Z_1 - N^{2} m_2 D_{k} Z_1 = N^2(m_1 m_4 - m_2 m_3)F_1'.$
We recall that $F_1' = q^{\frac{k_0}{12} + A -r}(1 + \sum_{i=1}^{\infty} d(i) q^{i})$ where each $d(i) \in \overline{\Z}.$ We may write $N^2(m_1 m_4  - m_2 m_3) = \kappa q^{t} (1 + \sum_{i =1}^{\infty} a(i)q^{i})$ where $\kappa, a(i) \in \overline{\Z}$ and $t \in \N \cup \{0\}.$  Let $\{x(i)\}_{i=1}^{\infty}$ denote the sequence for which 
 $$N^{2}(m_1 m_4 - m_2 m_3)F_1' = \kappa q^{\frac{k_0}{12} + A - r + t} (1 + \sum_{i = 1}^{\infty} a(i)q^{i})(1 +  \sum_{i=1}^{\infty} d(i)q^{i})
 =q^{\frac{k_0}{12} + A -r + t}  \sum_{i=0}^{\infty} x(i) q^{i}.$$ 
 
\nin Let $K$ denote the positive integer for which $d(K)$ is not $p$-integral and $d(i)$ is $p$-integral for $i < K.$
Then $x(K) = \kappa (d(K) + \sum_{j=1}^{K-1} a(j) d(K - j)).$ We must have that  $\sum_{j=1}^{K-1} a(j) d(K - j))$ is $p$-integral since each $a(j) \in \overline{\Z}$ and $d(i)$ is $p$-integral for $i< K.$ Thus $d(K) + \sum_{j=1}^{K-1} a(j) d(K - j)$ is not $p$-integral since $d(K)$ is not $p$-integral. 
Finally, we stipulate that $p$ has the additional property that $p \nmid \kappa$ in the ring $\overline{\Z}.$ This is the case when $p$ is sufficiently large. Then $x(K)$ is not $p$-integral. 
We have thus shown that $p$ divides the denominator of some Fourier coefficient of $N^2(m_1 m_4 - m_2 m_3)F_1'.$
We have previously shown that if $p$ does not divide the denominator of any Fourier coefficient of $Z_1$ then $p$ does not divide the denominator of any Fourier coefficient of 
$ N^{2} m_4 Z_1 - N^{2} m_2 D_{k} Z_1 = N^2(m_1 m_4 - m_2 m_3)F_1'.$ This is a contradiction. Hence $p$ must divide the denominator of at least one Fourier coefficient of $Z_1.$
In particular, if $S$ is infinite then the sequence of denominators of the Fourier coefficients of $Z_1$ is unbounded. The proof that if $\widetilde{S}$ is infinite then every sufficiently large prime in $\widetilde{S}$ 
divides the denominator of some Fourier coefficient of $Z_2$ is completely analogous. 
\end{proof} 

\begin{restatable}{thm}{oldinducedmain}  \label{thm: oldinducedmain} Let $\rho$ denote a two-dimensional irreducible representation of $\Gamma_{0}(2)$ which is induced from a character of $\Gamma(2)$.  Assume that $\rho(T)$ has finite order, $c \in \Q$ and $[\Q(r): \Q] = 2.$ 
Let $k \in \Z$ and $Z \in M_{k}(\rho')$ whose component functions $Z_1$ and $Z_2$ have the property that all of their Fourier coefficients are algebraic numbers. Then the sequence of the denominators of the Fourier coefficients of $Z_1$ and the sequence of the denominators of the Fourier coefficients of $Z_2$ are unbounded. \\

\nin Let $M$ denote the square-free integer such that $\Q(\sqrt{M}) = \Q(r).$ If $p$ is any sufficiently large prime such that $M$ is not a quadratic residue mod $p$ then $p$ divides the denominator of at least one Fourier coefficient of $Z_1$ and $p$ divides the denominator of at least one Fourier coefficient of $Z_2.$
\end{restatable}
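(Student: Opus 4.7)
The plan is to reduce Theorem \ref{thm: oldinducedmain} to the already-established Theorem \ref{UBDVVMF} by checking that its hypotheses, and specifically the infinitude of the sets $S$ and $\widetilde{S}$, are automatic in the induced setting. I would first note that the three conditions imposed on $\rho$ in the statement (namely $\rho(T)$ of finite order, $c \in \Q$, and $[\Q(r):\Q]=2$) are precisely Assumption \ref{HYP}. Consequently Theorem \ref{UBDVVMF} applies as soon as one can show that $S$ (resp.\ $\widetilde S$) is infinite in order to conclude unbounded denominators for $Z_1$ (resp.\ $Z_2$).

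Next I would invoke the lemma preceding Theorem \ref{infiniteS}: if $\rho = \Ind_{\Gamma(2)}^{\Gamma_0(2)} \psi$, then in the basis $\{I,T^{-1}\}$ of coset representatives the matrix $\rho(T)$ has trace $0$, forcing $e^{2\pi i m_1} + e^{2\pi i m_2} = 0$ and hence $m_1 - m_2 \in \tfrac{1}{2}\Z \setminus \Z$. Therefore $A - B = \tfrac{u}{v}$ with $v = 2$ and $u$ odd, so the congruence conditions $p \equiv u \pmod{v}$ and $p \equiv -u \pmod{v}$ appearing in the definitions of $S$ and $\widetilde{S}$ both reduce to the single condition that $p$ be odd. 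In particular $S = \widetilde{S} = \{p \text{ odd prime} : M \text{ is not a quadratic residue mod } p\}$.

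Now I would apply Lemma \ref{reciprocity}, whose proof is a direct application of quadratic reciprocity together with Dirichlet's theorem on primes in arithmetic progressions: since $M$ is a nonsquare square-free integer, the set of odd primes $p$ with $\left(\tfrac{M}{p}\right)=-1$ has density $\tfrac{1}{2}$ in the set of all primes, and in particular is infinite. Feeding this into Theorem \ref{UBDVVMF} gives that every sufficiently large prime $p \in S = \widetilde S$ divides the denominator of at least one Fourier coefficient of $Z_1$ and at least one Fourier coefficient of $Z_2$, which is exactly the explicit statement in the second paragraph of the theorem; in particular the sequences of denominators of the two component functions are unbounded.

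The proof as sketched is essentially an assembly of existing results, so no new analytic input is required, and I do not foresee a serious obstacle. The only point that warrants a moment's care is verifying that the induced representation in question is actually covered by the prior development: irreducibility of $\rho$ (needed throughout Section \ref{hypergeometric}) follows from the character-theoretic criterion for an induced representation to be irreducible, and the diagonalizability of $\rho(T)$, required for $F$ to have a non-logarithmic $q$-expansion, is guaranteed by the trace-zero computation above, which shows that the two eigenvalues of $\rho(T)$ are distinct roots of unity.
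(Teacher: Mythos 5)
Your proposal is correct and follows essentially the same route as the paper: the paper's proof likewise observes (via the trace-zero computation for induced representations and Lemma \ref{reciprocity}) that $S = \widetilde{S} = \{p \text{ odd}: M \text{ is not a quadratic residue mod } p\}$ is infinite, and then cites Theorem \ref{UBDVVMF}. Your extra remarks on irreducibility and diagonalizability are harmless but unnecessary, since irreducibility is assumed in the statement and $\rho(T)$ of finite order is already diagonalizable.
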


\begin{proof} We have shown in the proof of Theorem \ref{infiniteS} that if $\rho$ is induced by a character of $\Gamma(2)$ then $S = \widetilde{S} = \{p: p \textrm{ is odd and } M \textrm{ is not a quadratic residue mod } p \}.$ Therefore $S$ and $\widetilde{S}$ are infinite sets. The conclusion of \cref{thm: oldinducedmain} is now a consequence of Theorem \ref{UBDVVMF}. 
\end{proof} 
\vspace{0.1 in}
\nin Our proof of \cref{thm: inducedmain} is obtained by combining \cref{thm: oldinducedmain} and 
\cref{thm: examples}. We remind the reader of the notation:   $T =  \left[ {\begin{array}{cc}
   1 & 1 \\
   0 & 1 \\
\end{array} } \right],$  
\nin $U =  \left[ {\begin{array}{cc}
   1 & 0 \\
   1 & 1 \\
\end{array} } \right],$ and
$S = \left[ \begin{array}{cc}  0 & 1 \\ -1 & 0 \end{array} \right ].$ 
We recall that  $\Gamma(2) = \langle T^2$ , $U^{2}, -I \rangle$ and 
$\Gamma_{0}(2) = \langle  T, U^2, -I \rangle.$ 

\begin{restatable}{thm}{examples} \label{thm: examples} Let $\psi$ denote a character of $\Gamma(2)$ such that $\psi(-I) = 1$ and let  $\rho = \textrm{Ind}_{\Gamma(2)}^{\Gamma_{0}(2)} \psi$. Let $\xi_{1}, \xi_{2} \in \C$ such that $e^{2 \pi i \xi_{1}} = \psi(T^2)$ and $e^{2 \pi i \xi_{2}} = \psi(U^2).$ If $\xi_{1} \in \Q$ and if $[\Q(\xi_{2}): \Q] = 2$ then $\rho$ is irreducible, $\rho(T)$ has finite order, the image of $\rho$ is infinite, and $a,b,c \in \Q.$ Moreover, $\Q(r) = \Q(\xi_{2})$ and thus $r$ is an algebraic number of degree two. 
\end{restatable}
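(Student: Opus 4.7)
My plan is to verify the five conclusions in order, exploiting the explicit structure of the induced representation to reduce everything to elementary eigenvalue and Galois calculations.

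\textbf{Setting up the representation.} First I fix left coset representatives $\{I,T\}$ for $\Gamma(2)\backslash\Gamma_0(2)$ and directly verify the matrix identity $T^{-1}U^2T = (-I)\cdot U^{-2}T^2$ by computing both sides (both equal $\left[\begin{smallmatrix}-1 & -2\\ 2 & 3\end{smallmatrix}\right]$). Using this and $\psi(-I)=1$, I read off
\[
\rho(T)=\begin{pmatrix} 0 & e^{2\pi i\xi_1}\\ 1 & 0\end{pmatrix}, \qquad \rho(U^2)=\operatorname{diag}\!\bigl(e^{2\pi i\xi_2},\, e^{2\pi i(\xi_1-\xi_2)}\bigr).
\]

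\textbf{Irreducibility, finite order, infinite image.} Since $\rho(T)^2=e^{2\pi i\xi_1}I$ with $\xi_1\in\mathbb{Q}$, $\rho(T)$ has finite order with eigenvalues $\pm e^{\pi i\xi_1}$. The Mackey criterion gives irreducibility once the conjugate character $\psi^T(g):=\psi(T^{-1}gT)$ disagrees with $\psi$; comparing values at $U^2$ shows the ratio is $e^{2\pi i(\xi_1-2\xi_2)}$, which is not $1$ because $\xi_2\notin\mathbb{Q}$ and $\xi_1\in\mathbb{Q}$ force $2\xi_2-\xi_1\notin\mathbb{Z}$. The same irrationality shows that $e^{2\pi i\xi_2}$, being an eigenvalue of $\rho(U^2)$, has infinite order, so the image is infinite.

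\textbf{Rationality of $a$ and $b+c$ via the cusp at $\infty$.} The eigenvalues of $\rho_0(T)=\omega(T)^{-k_0}\rho(T)$ are roots of unity, so the leading exponents $l_1,l_2$ of the two components of $XF_0$ are rational. Theorem 1.5 and the paper's discussion give $l_1+l_2=\tfrac{1}{6}-a$ and $l_1 l_2=b+c$, hence $a,\,b+c\in\mathbb{Q}$.

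\textbf{$\mathbb{Q}(r)=\mathbb{Q}(\xi_2)$ via the cusp at $0$.} The monodromy of the ODE at $Y=1$ matches $\rho_0(U^2)=\omega(U^2)^{-k_0}\rho(U^2)$, whose eigenvalues are $\omega(U^2)^{-k_0}e^{2\pi i\xi_2}$ and $\omega(U^2)^{-k_0}e^{2\pi i(\xi_1-\xi_2)}$. Writing $\beta\in\mathbb{Q}$ for the exponent of the root of unity $\omega(U^2)^{-k_0}$, the two indicial roots satisfy $r_1\equiv \xi_2+\beta\pmod{\mathbb{Z}}$ and $r_2\equiv \xi_1-\xi_2+\beta\pmod{\mathbb{Z}}$. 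In particular each $r_i$ lies in $\mathbb{Q}(\xi_2)$, and because $\xi_2\notin\mathbb{Q}$ while $\beta,\xi_1\in\mathbb{Q}$, neither lies in $\mathbb{Q}$. Thus $\mathbb{Q}(r)=\mathbb{Q}(\xi_2)$ is a quadratic extension.

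\textbf{The main obstacle: $c\in\mathbb{Q}$.} I still need $b+4c=r_1r_2\in\mathbb{Q}$. Since $r_1+r_2=(1+3a)/3$ is already rational from the Fuchs relation, this is equivalent to $r_1$ and $r_2$ being Galois conjugates in $\mathbb{Q}(\xi_2)/\mathbb{Q}$. My plan for this step is to use the natural isomorphism $\operatorname{Ind}\psi\cong\operatorname{Ind}\psi^T$ (explicitly implemented by conjugation by $\rho(T)$, as one verifies by the short calculation that $M=\rho(T)$ intertwines $\rho$ with its $\psi\leftrightarrow\psi^T$ variant). This isomorphism exchanges the two diagonal entries of $\rho(U^2)$, hence interchanges the indicial roles of $r_1$ and $r_2$; combined with the fact that the MLDE coefficients are invariants of the isomorphism class, this forces the unordered set $\{r_1,r_2\}$ to be Galois-stable as a subset of $\mathbb{Q}(\xi_2)$. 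Since $r_1\notin\mathbb{Q}$, the nontrivial Galois element must swap $r_1$ and $r_2$, so $r_1r_2=N_{\mathbb{Q}(\xi_2)/\mathbb{Q}}(r_1)\in\mathbb{Q}$, giving $b+4c\in\mathbb{Q}$, and then $c=\tfrac{1}{3}\bigl((b+4c)-(b+c)\bigr)\in\mathbb{Q}$, whence also $b\in\mathbb{Q}$.

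I expect this last step to be the main technical obstacle, because merely knowing $r_1\in\mathbb{Q}(\xi_2)\setminus\mathbb{Q}$ and $r_1+r_2\in\mathbb{Q}$ is not by itself enough to force Galois-conjugacy; the additional input from the structure of the induced representation (equivalently, the compatibility between the representation-theoretic symmetry and the specific integer shifts fixing $r_1,r_2$ as genuine indicial roots rather than mere residues of their classes modulo $\mathbb{Z}$) is exactly what makes the rationality go through.
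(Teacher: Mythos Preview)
Your steps through the identification $\mathbb{Q}(r)=\mathbb{Q}(\xi_2)$ track the paper's proof closely (you use Mackey where the paper checks irreducibility by hand, and you phrase the cusp-$0$ analysis in monodromy language where the paper computes with $F_0|_0S$ directly, but the substance is the same). Your sign/labelling conventions for $r_1,r_2$ differ from the paper's, which uses $\rho_0(ST^2S^{-1})=\rho_0(-U^{-2})$ rather than $\rho_0(U^2)$, but this is harmless for $\mathbb{Q}(r_i)=\mathbb{Q}(\xi_2)$.

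The genuine gap is your argument for $c\in\mathbb{Q}$. Conjugation by $\rho(T)$ is an \emph{inner} automorphism of $\rho$, so the isomorphism $\operatorname{Ind}\psi\cong\operatorname{Ind}\psi^{T}$ it implements carries no information about the invariants $a,b,c$ beyond the tautology that $\rho\cong\rho$. The swap of diagonal entries of $\rho(U^2)$ only says that $\{r_1,r_2\}$ is the unordered pair of roots of a single quadratic, which you already knew; it has no connection to the Galois involution on $\mathbb{Q}(\xi_2)$. Concretely, writing $r_1=-\xi_2+Q_1$ and $r_2=\xi_2+Q_2$ with $Q_1,Q_2\in\mathbb{Q}$, one checks that $r_1r_2\in\mathbb{Q}$ is equivalent to $Q_1-Q_2=\operatorname{Tr}_{\mathbb{Q}(\xi_2)/\mathbb{Q}}(\xi_2)$, and nothing in your symmetry argument produces this identity. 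Your closing paragraph correctly senses that the actual integer shifts pinning down $r_1,r_2$ (not just their classes mod $\mathbb{Z}$) are the crux, but you have not supplied a mechanism that determines them.

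The paper does not go through any representation-theoretic symmetry here. It transports the MLDE to the cusp $0$ by hitting it with $|_4 S$, obtaining $\mathcal{L}f=D_0^2f+a(G|_2S)\,D_0f+\bigl(b\,G^2|_4S+cE_4\bigr)f$, uses the appendix computation $G|_2S=-\tfrac12+O(q_2)$ to read off the indicial polynomial $z^2-\bigl(\tfrac13+a\bigr)z+(b+4c)$ at $q_2=0$, identifies the leading exponents $r_1,r_2$ of $F_0|_0S$ as its two roots, and from $[\mathbb{Q}(r):\mathbb{Q}]=2$ together with the already-rational linear coefficient $\tfrac13+a$ concludes $b+4c=r\bar r\in\mathbb{Q}$. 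Combined with $b+c\in\mathbb{Q}$ this yields $b,c\in\mathbb{Q}$. If you want to complete your argument along the paper's lines, you should replace the inner-conjugation step with this direct indicial analysis at the other cusp.
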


\nin Our proof of \cref{thm: examples} makes use of the fact that  $G|_{2} S = - \frac{1}{2} + O(q_2),$ where $q_2 := e^{\pi i \tau}.$   This computation appears in the last subsection of the appendix. \\

\begin{proof}  Let $\alpha = \psi(T^2)$ and let $\beta = \psi(U^2).$ Let $\tilde{\psi}$ denote the function on $\Gamma_{0}(2)$ for which $\tilde{\psi}|_{\Gamma(2)} = \psi$ 
and $\tilde{\psi}(g) = 0$ if $g \not \in \Gamma(2).$ We now compute $\rho$ with respect to the left coset representatives $I, T^{-1}$ of  $\Gamma(2)$ in $\Gamma_{0}(2).$ We have that 
\begin{align*}
\rho(T) & = \begin{bmatrix}\tilde{\psi}(I^{-1}TI) & \tilde{\psi} (I^{-1} T T^{-1}) \\
						\tilde{\psi} ((T^{-1})^{-1}TI) & \tilde{\psi}((T^{-1})^{-1}TT^{-1})  \end{bmatrix} \\
&=  \begin{bmatrix} 0 & 1 \\
				    \alpha & 0  \end{bmatrix}. \end{align*}

 We will use the fact that $TU^2T^{-1} = -T^{2}U^{-2}$ to compute $\rho(U^2).$
We have that  
\begin{align*}
\rho(U^2) & = \begin{bmatrix} \tilde{\psi}(I^{-1}U^2 I) & \tilde{\psi} (I^{-1} U^2 T^{-1}) \\
						\tilde{\psi} ((T^{-1})^{-1}U^2I) & \tilde{\psi}((T^{-1})^{-1}U^2T^{-1})  \end{bmatrix} \\
&=  \begin{bmatrix} \tilde{\psi}(U^2) & \tilde{\psi} (U^2 T^{-1}) \\
						\tilde{\psi} (T U^{2}) &  \tilde{\psi}(-T^{2}U^{-2})  \end{bmatrix} \\
&=  \begin{bmatrix} \beta & 0 \\
				    0 & \frac{\alpha}{\beta}  \end{bmatrix}. \end{align*}
We also note that $\rho(-I) = I$ since $\psi(-I) = 1.$
It is also useful to observe that $$\rho(T^2) = \begin{bmatrix} \alpha & 0 \\ 0 & \alpha \end{bmatrix}.$$
If $\xi_{1} \in \Q$ then $\alpha = e^{2 \pi i \xi_{1}}$ is a root of unity and $\rho(T)$ has finite order. 
If $[\Q(\xi_{2}): \Q] = 2$ then $\xi_{2} \not \in \Q$, $\beta = e^{2 \pi i \xi_{2}}$ is not a root of unity, and $\rho(U^2)$ does not have finite order. Thus the image of $\rho$ is infinite. The fact that $\xi_{1} \in \Q$ and $\xi_{2} \not \in \Q$ implies that $\frac{\alpha}{\beta} = e^{2 \pi i (\xi_{1} -\xi_{2})} \neq \beta.$ Therefore $\rho(U^2)$ is not a scalar matrix. \\

\nin We now prove that $\rho$ is irreducible. Suppose that $\rho$ is reducible. Then there exists a one-dimensional $\Gamma_{0}(2)$-invariant subspace of $\C^{2}$, which we denote by $E$. As $\rho(U^2) = \left[ \begin{array} {cc} \beta & 0 \\
				    0 & \frac{\alpha}{\beta}  \end{array} \right]$, $E$ must equal the $\beta$-eigenspace for
 $\rho(U^2)$, which is spanned by $\left [ \begin{array}{c} 1 \\ 0 \end{array} \right],$ or $E$ must equal the
 $\frac{\alpha}{\beta}$-eigenspace of $\rho(U^2)$, which is spanned by  
$\left [ \begin{array}{c} 0 \\ 1 \end{array} \right].$ However, $\rho(T) = \left[ \begin{array} {cc} 0 & 1 \\
				    \alpha & 0 \end{array} \right]$ does not act on the subspace spanned by  $\left [ \begin{array}{c} 1 \\ 0 \end{array} \right]$ nor does it act on the subspace spanned by $\left [ \begin{array}{c} 0 \\ 1 \end{array} \right].$
Therefore $E$ is not a $\Gamma_{0}(2)$-invariant subspace. This is a contradiction and we conclude that $\rho$ is irreducible.  \\

\nin We recall that the vector-valued function $F_{0} := \eta^{-2k_0} F$ is a meromorphic vector-valued modular form for  $\rho_{0} := \rho \otimes \omega^{-k_0}$ where $\omega$ is the character of $\Gamma$ for the modular form $\eta.$ 
We also recall that $\Gamma_{0}(2)$ has two cusps $\infty$ and $S \cdot \infty = 0$, $\textrm{Stab}_{\Gamma_{0}(2)} \infty  = \langle -I, T \rangle$, $\textrm{Stab}_{\Gamma_{0}(2)} 0 = \langle -I, ST^{2}S^{-1} \rangle$, and that $-U^{-2} = ST^{2}S^{-1}.$ \\

\nin We will prove that $[\Q(r): \Q]  =2$ by analyzing the leading exponent of each of the $q_2 = e^{\pi i \tau}$-expansions of the component functions of $F_{0}|S.$ We begin by noting that $$\left(\rho_{0}(ST^2S^{-1}) F_{0} \right)|_{0} S = \rho_{0}(ST^2S^{-1}) (F_{0}|_{0} S) = (F_{0}|_{0} ST^2S^{-1})|_{0} S = (F_{0}|_{0} S)|_{0}{T^{2}}.$$ We have that $\omega(ST^{2}S^{-1}) = \omega(T^2) = e^{\frac{2 \pi i}{3}}$ since $\omega$ is the character of $\Gamma$ and $\omega(T) =e^{\frac{2 \pi i }{6}}.$ Therefore
\begin{align*}    \rho_{0}(ST^2 S^{-1}) &= \omega(ST^{2}S^{-1})^{-k_0} \rho(ST^2S^{-1}) \\ 
                                                            & = e^{\frac{-2 \pi i k_0}{3}} \rho(-U^{-2}) \\
                        					         & =   e^{\frac{-2 \pi i k_0}{3}} \begin{bmatrix}e^{-2 \pi i \xi_{2}} & 0  \\   0 & e^{2 \pi i (\xi_{2} - \xi_{1})} \end{bmatrix}   \\
									   & =   \begin{bmatrix}e^{2 \pi i (-\xi_{2} - \frac{k_0}{3})} & 0  \\ 0 & e^{2 \pi i (\xi_{2} - \xi_{1} - \frac{k_0}{3})} \end{bmatrix}.  \end{align*}
Thus $$\rho_{0}(ST^{2}S^{-1}) F_{0}|_{0}S =   \begin{bmatrix}e^{2 \pi i (-\xi_{2} - \frac{k_0}{3})} &  0  \\   0 & e^{2 \pi i (\xi_{2} - \xi_{1} - \frac{k_0}{3})} \end{bmatrix}  F_{0}|_{0} S = (F_{0}|_{0}S)|_{0} T^2.$$
Let $P_1$ and $P_2$ denote the functions for which $F_{0}|_{0}S = \left [\begin{matrix} P_1 \\ P_2 \end{matrix} \right].$  Thus $P_{1}(\tau + 2) = e^{2 \pi i (-\xi_{2} - \frac{k_0}{3})}  P_{1}(\tau)$ and 
$P_2(\tau + 2) = e^{2 \pi i (\xi_{2} - \xi_{1} - \frac{k_0}{3})} P_2(\tau).$ We let 
$\widehat{P_{1}}(\tau) := e^{- \pi i (-\xi_{2} - \frac{k_0}{3}) \tau }P_1(\tau)$ and \newline 
\nin $\widehat{P_{2}}(\tau) := e^{- \pi i (\xi_{2} - \xi_{1} - \frac{k_0}{3}) \tau }P_2(\tau).$
Then $\widehat{P_{1}}(\tau + 2)  = \widehat{P_{1}}(\tau)$ and $\widehat{P_{2}}(\tau + 2) = \widehat{P_{2}}(\tau).$ \\

\nin Hence $\widehat{P_1}$ has a Fourier expansion and we write $\widehat{P_{1}}(\tau) = \sum_{n \in \Z} a_n q_2^{n}$
where $q_{2} = e^{\pi i \tau}.$ Thus $P_1(\tau) = q_{2}^{-\xi_{2} - \frac{k_0}{3}} \left(\sum_{n \in \Z} a_n q_2^{n} \right).$ 
The fact that $F_{0}$ is meromorphic at the cusp $0$ implies that $a_n = 0$ if $n < < 0.$ Similarly,  $P_2(\tau) = q_2^{\xi_{2} - \xi_{1} - \frac{k_0}{3}} \left(\sum_{n \in \Z} b_n q_{2}^{n} \right)$ where $b_n = 0$ if $n < < 0.$
Let $r_1$ denote the complex number for which $P_1 = q_2^{r_1}\left(\sum_{n \geq 0} c_n q_2^{n}\right)$ with $c_0 \neq 0$
and let $r_2$ denote the complex number for which $P_2 = q_{2}^{r_{2}}\left(\sum_{n \geq 0} d_n q_{2}^{n}\right)$ with $d_0 \neq 0.$
We have that $r_1 \equiv -\xi_{2} - \frac{k_0}{3}\;  (\textrm{mod } \Z)$ and $r_2 \equiv \xi_2 - \xi_1 - \frac{k_0}{3} \; (\textrm{mod } \Z).$
As $\xi_1 \in \Q$ and $k_0 \in \Z$, we have that $\Q(r_1) = \Q(r_2) = \Q(\xi_2).$ \\

\nin We will now prove that $r_1$ and $r_2$ are the roots of the quadratic polynomial $z^2 + z(-a-\frac{1}{3}) + b + 4c.$ This fact implies that $r$, which is a root of the quadratic polynomial $z^2 + z(-a-\frac{1}{3}) + b + 4c,$ is an algebraic number of degree two and that $\Q(r) = \Q(\xi_2)$ since $\Q(r_1) = \Q(r_2) = \Q(\xi_2).$ We recall that $F_{0}$ satisfies the differential equation 
$$D_0^{2} F_{0} + aG D_{0} F_{0} + (bG^2 + cE_4)F_{0} = 0.$$ We analyze the above differential equation 
at the cusp $S \cdot \infty = 0$ of $\Gamma_{0}(2).$ We have that 
\begin{align*}
0 & =  \left( D_0^{2} F_{0} + aG D_{0} F_{0} + (bG^2 + cE_4)F_{0} \right)|_{4} S \\
& =  D_{0}^{2}(F_{0}|_{0}S) + a G|_{2} S D_{0}(F_{0}|_{0}S) + (bG^{2}|_{4}S + cE_4) F_{0}|_{0}S.
\end{align*}

\nin Let $\mathcal{L}$ denote the differential operator for which $$\mathcal{L} f := D_{0}^{2}(f) + a G|_{2} S D_{0}(f) + (bG^{2}|_{4}S + cE_4)f.$$ Then $0 = \mathcal{L} P_1 = \mathcal{L} P_2.$
We recall that $P_1 = q_{2}^{r_1}\left(\sum_{n \geq 0} c_n q_2^{n} \right)$ where $c_0 \neq 0.$
The fact that $\mathcal{L}P_1 = 0$ implies that the Fourier coefficient of $q_2^{r_1}$ in $\mathcal{L}(q_2^{r_1})$, which we shall compute explicitly,  is zero. \\

\nin The chain rule gives us that $D_0 = \frac{1}{2 \pi i}\frac{d}{d \tau}  = \frac{1}{2} q_{2} \frac{d}{d q_2}.$
Therefore $D_0(q_2^{r_1})= \frac{r_1}{2} q_2^{r_1}.$ We also have: 
\begin{align*} D_2(D_0 (q_2^{r_1})) &= \frac{r_1}{2} D_2(q_2^{r_1}) \\
&= \frac{r_1}{2} \left(D_0(q_2^{r_1}) - \frac{1}{6}E_2 q_2^{r_1} \right) \\
& = \frac{r_1}{2} \left(\frac{r_1}{2}q_2^{r_1} - \frac{1}{6}q_2^{r_1} + O\left(q_2^{r_1 +1}\right)\right) \\
& = \left(\frac{r_1^{2}}{4} - \frac{r_1}{12}\right)q_2^{r_1} + O \left( q_2^{r_1 +1} \right).
\end{align*} 

Thus \begin{align*} \mathcal{L}(q_2^{r_1}) &=  D_2(D_0 (q_2^{r_1})) + aG|_2 S D_0(q_2^{r_1}) + (bG^2|_{4}S + cE_4)q_2^{r_1} \\
& =      \left( \frac{r_1^{2}}{4} - \frac{r_1}{12} \right) q_2^{r_1} + O\left(q_2^{r_1 +1}\right) + a \left(\frac{-1}{2} + O(q_2) \right)\frac{r_1}{2} q_2^{r_1} + \left (\frac{b}{4} + c + O\left(q_2 \right) \right )q_2^{r_1} \\
&=  q_2^{r_1} \left(O\left(q_2\right) + \frac{r_1^{2}}{4} - \frac{r_1}{12}  - \frac{ar_1}{4}   + \frac{b}{4} + c  \right).
\end{align*}

\nin  The Fourier coefficient of $q_2^{r_1}$ in $\mathcal{L}(q_2^{r_1})$ is zero and thus $r_1^2  + (-\frac{1}{3} - a)r_1  + b + 4c = 0.$  Similarly, the Fourier coefficient of $q_2^{r_2}$ in $\mathcal{L}(q_2^{r_2})$ is zero and therefore $r_2$ is also a root of the quadratic polynomial $z^2 + z(-\frac{1}{3}-a) + b + 4c.$ Hence $\Q(r) = \Q(r_1) = \Q(r_2).$ We have previously shown that $\Q(r_1) = \Q(\xi_2)$ and we conclude that $\Q(r) = \Q(\xi_2).$ We explained on page 24 that $\rho(T)$ having finite order implies that $a, b +c \in \Q.$  The fact that $[\Q(r): \Q] =2$ implies that $r \overline{r} = b+ 4c \in \Q.$ Thus $a, b+c , b+4c \in \Q$. Hence $a, b,c \in \Q.$ 
\end{proof}

\nin We now give the proof of \cref{thm: inducedmain}.

\inducedmain*
 
\nin \begin{remark} The group $\Gamma(2)/\langle -I \rangle$ is freely generated by 
$T^2 \langle -I \rangle$ and $U^2 \langle -I \rangle.$ Thus for any $\xi_1, \xi_2 \in \C,$ there exists a character $\psi$ of $\Gamma(2)$ for which $e^{2 \pi i \xi_1} = \psi(T^2), e^{2 \pi i \xi_2}= \psi(U^2),$ and $1 = \psi(-I).$  \end{remark} 

\nin \begin{remark} We have proven in \cref{thm: examples} that  if $\psi$ is a character of $\Gamma(2)$ which satisfies the hypotheses of \cref{thm: inducedmain} then $\rho = \textrm{Ind}_{\Gamma(2)}^{\Gamma_{0}(2)} \psi$ is irreducible, 
$\rho(T)$ has finite order, and $r$ is an algebraic number of degree two. It therefore follows from \cref{thm: algebraicbasis} that if $\rho$ is induced from a character of $\Gamma(2)$ which satisfies the hypotheses of \cref{thm: inducedmain} then for every $k \in \Z$, there is a basis of $M_{k}(\rho')$ consisting of vector-valued modular forms whose component functions have the property that all of their Fourier coefficients are elements of $\Q(r) = \Q(\xi_2).$ \end{remark}

\begin{proof} We have proven in \cref{thm: examples} that if $\psi$ is a character of $\Gamma(2)$ which satisfies the hypotheses of \cref{thm: inducedmain} then $\rho = \textrm{Ind}_{\Gamma(2)}^{\Gamma_{0}(2)} \psi$ satisfies the hypotheses of \cref{thm: oldinducedmain}, $c \in \Q$, and $[\Q(r):\Q] =2$. Moreover, $\Q(r) = \Q(\xi_2) = \Q(\sqrt{M})$. The conclusion of \cref{thm: inducedmain} now follows from \cref{thm: oldinducedmain}.

\end{proof}

\section{Appendix} \label{appendix}
\subsection{The first and second derivatives of the Hauptmodul $\mathfrak{J}$} 
\nin The purpose of this subsection is to prove Propositions \ref{appendix1} and \ref{appendix2}.
 We begin with the proof of Proposition \ref{appendix1}. We will prove a bit more. Namely, we will show that: 
$$\theta(\mathfrak{J}) = (1 - \mathfrak{J})G = \frac{G(E_4 - 4G^2)}{E_4 - G^2}$$
\begin{proof} (Proof of Proposition \ref{appendix1}.) The derivative of a modular function is a meromorphic modular form of weight two. The differential operator $\theta  = \frac{1}{2 \pi i}\frac{d}{d \tau} = q \frac{d}{dq}$ preserves the order of vanishing of a function at $\infty$. Therefore $\theta(\mathfrak{J})$ has a simple pole at $\infty$. Moreover, $\theta(\mathfrak{J})$ has no poles elsewhere since $\mathfrak{J}$ has no poles elsewhere. As $E_4 - G^2$ has a simple zero at $\infty$,  the function $(E_4 - G^2) \theta(\mathfrak{J})$ is holomorphic and thus $(E_4 - G^2) \theta(\mathfrak{J}) \in M_{6}(\Gamma_{0}(2)) = \C E_4 G \bigoplus \C G^{3}.$ We use the $q$-series expansions of $(E_4 - G^2) \theta(\mathfrak{J}), E_4 G,$ and $G^3$ to conclude that $(E_4 - G^2) \theta(\mathfrak{J}) = E_4 G - 4 G^{3}.$ \\

\nin It now suffices to prove that $\frac{E_4 - 4G^2}{E_4 - G^2} = 1 - \mathfrak{J}$. \\ 

\nin  The modular functions $\mathfrak{J}$ and $\frac{E_4 - 4G^2}{E_4 - G^2}$ both have a unique pole, which is a simple pole at $\infty.$ As $\Gamma_{0}(2)$ is a genus zero subgroup, the Riemann-Roch theorem implies that the dimension of the space of meromorphic functions on $\Gamma_{0}(2) \backslash (\mathfrak{H} \bigcup \mathbb{P}^{1}(\Q))$ which have at most a simple pole at $\infty$ and which are holomorphic elsewhere is two. We can take a basis for this space to be the constant function $1$ and $\mathfrak{J}.$ Therefore there $y,z \in \C$ such that  
$\frac{E_4 - 4G^2}{E_4 - G^2} = y + z\mathfrak{J}$. We compare the coefficients of $q^{-1}$ and $q^{0}$ in the $q$-expansions of $\frac{E_4 - 4G^2}{E_4 - G^2}$ and $\mathfrak{J}$ to conclude that $\frac{E_4 - 4G^2}{E_4 - G^2} = 1 - \mathfrak{J}$. 
\end{proof}

\nin \begin{remark} We have shown in the above proof that $\mathfrak{K} = 64 \mathfrak{J} = q^{-1}(1 + O(q)).$ 
\end{remark}

\nin We need the following Proposition before giving the proof of Proposition \ref{appendix2}.
\begin{proposition}  $$\theta(G) = \frac{1}{6}(E_2 G + E_4 - 2G^2)$$
\end{proposition}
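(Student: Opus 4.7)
The plan is to exploit the fact that the modular derivative $D_{2}$ carries weight-two forms on $\Gamma_{0}(2)$ to weight-four forms on $\Gamma_{0}(2)$, together with the known description $M_{4}(\Gamma_{0}(2)) = \mathbb{C} G^{2} \oplus \mathbb{C} E_{4}$ stated earlier in the paper. Since $G \in M_{2}(\Gamma_{0}(2))$, we have $D_{2}G = \theta G - \tfrac{1}{6} E_{2} G \in M_{4}(\Gamma_{0}(2))$, so there exist unique complex numbers $\alpha,\beta$ with
\[
\theta G - \tfrac{1}{6} E_{2} G \;=\; \alpha\, G^{2} + \beta\, E_{4}.
\]
Proving the proposition is thus equivalent to showing $\alpha = -\tfrac{1}{3}$ and $\beta = \tfrac{1}{6}$, since then $\theta G = \tfrac{1}{6}E_{2}G - \tfrac{1}{3}G^{2} + \tfrac{1}{6}E_{4} = \tfrac{1}{6}(E_{2}G + E_{4} - 2G^{2})$.

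To identify $\alpha$ and $\beta$, I would simply compare the first few Fourier coefficients on both sides. Starting from $E_{2} = 1 - 24q - 72q^{2} + O(q^{3})$, a short calculation gives $G = -E_{2}(\tau) + 2E_{2}(2\tau) = 1 + 24q + 24q^{2} + O(q^{3})$, and then $E_{2}G = 1 + 0\cdot q - 624\, q^{2} + O(q^{3})$ while $\theta G = 24 q + 48 q^{2} + O(q^{3})$, so the left-hand side begins $-\tfrac{1}{6} + 24 q + 152\, q^{2} + O(q^{3})$. On the right-hand side, $G^{2} = 1 + 48 q + 624 q^{2} + O(q^{3})$ and $E_{4} = 1 + 240 q + O(q^{2})$, giving the linear system $\alpha + \beta = -\tfrac{1}{6}$ and $48\alpha + 240\beta = 24$, whose unique solution is $\alpha = -\tfrac{1}{3}$, $\beta = \tfrac{1}{6}$.

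There is no real obstacle here: the only nontrivial input is the $D_{k}$-invariance of $M_{k}(\Gamma_{0}(2))$ (which follows from the transformation property of $E_{2}$ recalled in Section~\ref{derivative}) plus the known basis of $M_{4}(\Gamma_{0}(2))$. After that, the identification reduces to a two-variable linear system determined by the constant and $q$-coefficients of a $q$-expansion, which can be verified by direct inspection.
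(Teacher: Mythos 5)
Your proposal is correct and follows the same route as the paper: both note that $D_{2}G = \theta G - \tfrac{1}{6}E_{2}G$ lands in $M_{4}(\Gamma_{0}(2)) = \mathbb{C}G^{2}\oplus\mathbb{C}E_{4}$ and then pin down the two coefficients by comparing $q$-expansions. Your numerical work (including the consistency check at $q^{2}$) is accurate, so there is nothing to add.
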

\begin{proof} As $G \in M_{2}(\Gamma_{0}(2))$, $D_{2}(G) = \theta(G) -\frac{1}{6}E_2 G \in M_{4}(\Gamma_{0}(2)) = \C E_4 \bigoplus \C G^2$. We use the $q$-series expansion of $D_2(G), E_4,$ and $G^2$ to conclude that 
$D_2(G) = \theta(G) - \frac{1}{6}E_2 G = -\frac{1}{6} E_4 - \frac{1}{3}G^2.$

\end{proof} 

\nin We will also need to use the fact that $\frac{E_4(\tau)}{G^2(\tau)} = \frac{\mathfrak{J}(\tau) + 3}{\mathfrak{J}(\tau)}$ in our proof of Proposition \ref{appendix2}. 
This fact can be derived from elementary algebra since $\mathfrak{J} := \frac{3G^2}{E_4 - G^2}.$  
We now give the proof of Proposition \ref{appendix2}. Namely, we show that:

$$\theta^{2}(\mathfrak{J}) =G^2(1 - \mathfrak{J})(\frac{-7\mathfrak{J} + 3}{6\mathfrak{J}})  + \frac{1}{6}E_2 \theta(\mathfrak{J}).$$

\begin{proof}  (Proof of Proposition \ref{appendix2}) We will use the formula  $\theta(G) = \frac{1}{6}(E_2 G + E_4 - 2G^2)$ to compute $\theta^{2}(\mathfrak{J}).$
We have that 
\begin{align*}  \theta^2(\mathfrak{J}) &= \theta(G(1 - \mathfrak{J}))  = -\theta(\mathfrak{J})G + (1 - \mathfrak{J})\theta(G) \\
&=  - G^2(1 - \mathfrak{J})     +  \frac{1}{6}(E_2 G + E_4 - 2G^2)(1 - \mathfrak{J}) \\
&= G^2(1 - \mathfrak{J})(-\frac{4}{3} + \frac{E_4}{6G^{2}})  + \frac{1}{6}E_2 G(1 - \mathfrak{J}) \\
& = G^2(1 - \mathfrak{J})(-\frac{4}{3} + \frac{\mathfrak{J} +3}{6\mathfrak{J}})  + \frac{1}{6}E_2 \theta(\mathfrak{J}) \\
& =   G^2(1 - \mathfrak{J})(\frac{-7\mathfrak{J} + 3}{6\mathfrak{J}})  + \frac{1}{6}E_2 \theta(\mathfrak{J}).  \end{align*}
\end{proof}

\subsection{An integrality result} 
\begin{lemma} \label{integral} Let $\mathfrak{K} :=  64 \mathfrak{J} = \frac{192G^2}{E_4 - G^2}.$  Then $\mathfrak{K} \in \frac{1}{q} \Z[[q]]^{\times}.$ 
\end{lemma}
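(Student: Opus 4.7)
The plan is a direct congruence argument. From $E_2 = 1 - 24\sum\sigma(n)q^n$ and $G = -E_2(\tau) + 2E_2(2\tau)$, one computes $G = 1 + 24\sum_{n\geq 1}\sigma_\mathrm{o}(n)q^n$, where $\sigma_\mathrm{o}(n)$ denotes the sum of the odd divisors of $n$. In particular $G \in 1 + 24q\mathbb{Z}[[q]] \subset \mathbb{Z}[[q]]^{\times}$, and the first few Fourier coefficients give $E_4 - G^2 = 192q + O(q^2)$. Thus $\mathfrak{K} = 192G^2/(E_4 - G^2) = q^{-1} + 40 + O(q)$ with leading coefficient $1$ already visible, and the substance of the lemma is integrality.

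To obtain integrality I would show $(E_4 - G^2)/192 \in q\mathbb{Z}[[q]]^{\times}$; combined with $G^2 \in \mathbb{Z}[[q]]^{\times}$, this expresses $\mathfrak{K}$ as a unit in $q^{-1}\mathbb{Z}[[q]]$. Reducing modulo $192$: the quadratic cross-term in $G^2$ carries a factor $24^2 = 576 = 3\cdot 192$, so $G^2 \equiv 1 + 48\sum \sigma_\mathrm{o}(n)q^n \pmod{192}$; and $240 \equiv 48 \pmod{192}$ gives $E_4 \equiv 1 + 48\sum \sigma_3(n) q^n \pmod{192}$. These combine to
\[
 E_4 - G^2 \;\equiv\; 48 \sum_{n\geq 1}\bigl(\sigma_3(n) - \sigma_\mathrm{o}(n)\bigr)q^n \pmod{192}.
\]

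The remaining input is the pointwise congruence $\sigma_3(n) \equiv \sigma_\mathrm{o}(n) \pmod 4$, which forces the right-hand side above to vanish modulo $192$. I would prove it divisor by divisor from the identity
\[
 \sigma_3(n) - \sigma_\mathrm{o}(n) = \sum_{d\mid n,\; d \text{ odd}} d(d-1)(d+1) \;+\; \sum_{d\mid n,\; d \text{ even}} d^3,
\]
observing that each summand is in fact divisible by $8$: for odd $d$, because $d-1$ and $d+1$ are consecutive even integers; for even $d$, trivially. This upgrades the congruence displayed above to $(E_4 - G^2)/192 \in q\mathbb{Z}[[q]]$, and combined with the already known leading coefficient $1$ in $q$, to $q\mathbb{Z}[[q]]^{\times}$. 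The lemma follows. The one genuinely arithmetic step is the $\pmod 4$ congruence between $\sigma_3$ and $\sigma_\mathrm{o}$; everything else is formal manipulation of power-series units.
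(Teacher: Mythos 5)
Your proposal is correct and follows essentially the same route as the paper: reduce $E_4 - G^2$ modulo $192$ (the cross term $24^2P^2$ and the replacement $240 \equiv 48$ work exactly as in the paper, whose series $P$ is precisely your $\sum_n \sigma_{\mathrm{o}}(n)q^n$), reduce integrality to the congruence $\sigma_3(n) \equiv \sigma_{\mathrm{o}}(n) \pmod 4$, and finish with the same unit manipulation in $\Z[[q]]$. The only difference is in how that congruence is verified: the paper splits into odd and even $n$, using $\sigma(2n) - 2\sigma(n) = \sigma(n')$ for the odd part $n'$ and multiplicativity of $\sigma_3$, whereas you check it termwise via $8 \mid d(d-1)(d+1)$ for odd divisors and $8 \mid d^3$ for even ones --- a slightly more uniform verification of the same key fact.
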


\begin{proof}
We have that  \begin{align*}  G = -E_2(\tau) + 2E_2(2\tau) & = -(1 - 24\sum_{n=1}^{\infty} \sigma(n)q^{n}) + 2(1- 24 \sum_{n=1}^{\infty} \sigma(n)q^{2n}) \\
& =    1 + 24 \sum_{n=0}^{\infty} \sigma(2n+1)q^{2n+1} +  \sum_{n=1}^{\infty} (24 \sigma(2n) - 48 \sigma(n))q^{2n}. \end{align*} Let $P =  \sum_{n=0}^{\infty} \sigma(2n+1)q^{2n+1} + \sum_{n=1}^{\infty} (\sigma(2n) - 2 \sigma(n))q^{2n}.$ Then $G = 1 + 24P$ and $$G^2 = 1 + 48P + 24^2 P^2 \equiv 1 + 48P \; (\textrm{mod } 192).$$
Therefore \begin{align*} G^2 - E_4 & \equiv 48P - 240 \sum_{n=1}^{\infty} \sigma_{3}(n)q^{n} \\ 
& \equiv 48\left(P - \sum_{n=1}^{\infty} \sigma_{3}(n)q^{n}\right) \; (\textrm{mod } 192). \end{align*}
Thus to prove that  $G^2- E_4 \in 192 \Z[[q]]$, it suffices to show that $$\sum_{n=0}^{\infty} \sigma(2n+1)q^{2n+1} + \sum_{n=1}^{\infty} (\sigma(2n) - 2 \sigma(n))q^{2n} = P \equiv  \sum_{n=1}^{\infty} \sigma_{3}(n)q^{n} \; (\textrm{mod } 4).$$ Equivalently, we must show that for all $n \geq 0$, $\sigma(2n +1) \equiv \sigma_3(2n+1) \; (\textrm{mod } 4)$ and that 
for all $n \geq 1$, $\sigma(2n) - 2\sigma(n) \equiv \sigma_{3}(2n) \; (\textrm{mod } 4).$  We observe that since any divisor of an odd integer is odd, $\sigma(2n +1) = \sum_{d \mid 2n +1} d \equiv \sum_{d \mid 2n +1} d^{3} = \sigma_{3}(2n +1)\; (\textrm{mod } 4).$ To prove that  $\sigma(2n) - 2\sigma(n) \equiv \sigma_{3}(2n) \; (\textrm{mod } 4)$, we first write $n = 2^{e} n'$ where $n'$ is odd and $e \geq 0$. We have that 
\begin{align*}\sigma(2n) - 2 \sigma(n) & = \sigma(2^{e+1} n') - 2 \sigma(2^{e} n') \\
& = (\sigma(2^{e+1}) - 2 \sigma(2^{e})) \sigma(n') \\
& = (2^{e+2} - 1 - 2(2^{e+1} - 1)) \sigma(n') \\
& = \sigma(n').
\end{align*} 
Thus $\sigma_{3}(2n) = \sigma_{3}(2^{e+1} n') = \sigma_{3}(2^{e+1}) \sigma_{3}(n') \equiv \sigma_{3}(n') \;(\textrm{mod } 4).$ Because $n'$ is odd, $\sigma_{3}(n') \equiv \sigma(n') \; (\textrm{mod } 4).$ 
 Thus $\sigma_{3}(2n) \equiv \sigma(n') \; (\textrm{mod } 4).$ 
We have thus proven that for all $n \geq 1$, $\sigma_{3}(2n) \equiv \sigma(n') \equiv \sigma(2n) - 2 \sigma(n) \; (\textrm{mod } 4)$. Hence $G^2 - E_4 \in 192 \Z[[q]].$  Moreover, $G^2 - E_4 = O(q)$ and thus $\frac{G^2 - E_4}{192q} \in \Z[[q]].$
We also have that $\frac{G^2 - E_4}{192q}  =  1 + O(q).$ We recall that the elements in $\Z[[q]]^{\times}$ are exactly those elements of $\Z[[q]]$ whose constant term is equal to $1$ or $-1.$
Therefore $\frac{G^2 - E_4}{192q} \in \Z[[q]]^{\times}$ and hence  $\frac{192 q} {G^2 - E_4} \in \Z[[q]]^{\times}.$ 
Finally, $G^2 = 1 + O(q) \in \Z[[q]]^{\times}$ and so $\frac{192G^2q}{E_4 - G^2} = (64q) \mathfrak{J} \in \Z[[q]]^{\times}.$ Thus $\mathfrak{J} \in \frac{1}{64q} \Z[[q]]^{\times}$ and 
$64 \mathfrak{J} = \mathfrak{K} \in \frac{1}{q} \Z[[q]]^{\times}.$
\end{proof}

\subsection{The $q_2$-expansion of $G|_{2}S$.} In this subsection, we show that the value of the constant term of the $q_2$-expansion of $G|_{2}S$ is equal to $\frac{-1}{2}.$ This result is used in the proof of \cref{thm: examples}.  Let $\theta = 1 + 2 \sum_{n=1}^{\infty} q^{n^{2}}$ and let $\mathcal{E} = \frac{\eta(4 \tau)^{8}}{\eta(2 \tau)^{4}}.$
Then $\theta^{4} = 1 + \sum_{n=1}^{\infty} r_{4}(n) q^{n}$ where $r_{4}(n)$ denotes the number of ways 
$n$ can be written as a sum of four squares. 
One may show that $G = \theta^{4} + 16 \mathcal{E}$ by using the theory of modular forms on $\Gamma_{0}(4)$ or by using the results that if $n$ is odd then $r_{4}(n) =8 \sigma_{1}(n)$, and if $n$ is even then $r_{4}(n) = 24 \sigma_{1}(n_0),$ where $n_0$ denotes the odd part of $n.$ \\

\nin We now use the fact that $\eta(\frac{-1}{\tau}) =  \sqrt{\frac{\tau}{i}} \eta(\tau)$ to calculate the constant term of the $q_{2} = e^{\pi i \tau}$-expansion of $\mathcal{E}|_{2}S.$ We compute that
\begin{align*}
\mathcal{E}|_2 S  & = \tau^{-2} \eta \left(4 \left(\frac{-1}{\tau}\right)\right)^{8}\eta \left(2 \left( \frac{-1}{\tau} \right) \right)^{-4}\\
& =\tau^{-2} \eta \left(\left(\frac{-1}{\frac{\tau}{4}}\right)\right)^{8} \eta \left(  \left(\frac{-1}{\frac{\tau}{2}} \right) \right)^{-4} \\
& = \tau^{-2} \left(\sqrt{\frac{\frac{\tau}{4}}{i}} \right)^{8} \eta^{8} \left(\frac{\tau}{4} \right)   \left(\sqrt{\frac{\frac{\tau}{2}}{i}} \right)^{-4} \eta^{-4} \left(\frac{\tau}{2} \right) \\
& = \frac{-1}{64}  \eta^{8} \left(\frac{\tau}{4} \right)  \eta^{-4} \left(\frac{\tau}{2} \right) \\
& = \frac{-1}{64} + \cdots.
\end{align*}
We will use the transformation law $\theta(\frac{-1}{\tau}) = \sqrt{\frac{\tau}{2i}} \theta(\frac{\tau}{4})$ to compute 
the $q_2$-expansion of $\theta^{4}|_{2} S.$
We have that 
\begin{align*} \theta^{4}|_{2} S & = \tau^{-2} \theta^{4} \left (\frac{-1}{\tau} \right)\\
& =  \tau^{-2} \left( \sqrt{\frac{\tau}{2i}} \right)^{4} \theta^{4} \left (\frac{\tau}{4} \right) \\
&  = \frac{-1}{4} \theta^{4} \left (\frac{\tau} {4} \right) \\
& = -\frac{1}{4} + \cdots.       \end{align*}
Therefore \begin{align*} G|_{2}S& = (\theta^{4} + 16 \mathcal{E})|_{2} S\\
&  = \theta^{4}|_{2} S + 16 \mathcal{E}|_{2} S \\
&=\left( -\frac{1}{4} + \cdots \right)+ 16 \left(\frac{-1}{64} + \cdots \right)\\
& = -\frac{1}{2} + \cdots. \end{align*}

\providecommand{\bysame}{\leavevmode\hbox to3em{\hrulefill}\thinspace}
\providecommand{\MR}{\relax\ifhmode\unskip\space\fi MR }
\providecommand{\MRhref}[2]{%
  \href{http://www.ams.org/mathscinet-getitem?mr=#1}{#2}
}
\providecommand{\href}[2]{#2}

\end{document}